\definecolor{navy}{rgb}{0,0,.5}
\definecolor{darkred}{rgb}{1,0,0}
\def\bRR{{\mathbf R}}
\def\RR{{\mathbb R}}
\def\ZZ{{\mathbb Z}}
\def\fm{{\mathfrak m}}
\def\Spec{\operatorname{Spec}}
\def\Tor{\operatorname{Tor}}
\def\Hom{\operatorname{Hom}}
\def\ev{\operatorname{ev}}
\def\Trop{\operatorname{Trop}}
\def\trop{\operatorname{trop}}
\def\bTrop{\mathbf{Trop}}
\def\Star{\operatorname{Star}}
\def\Span{\operatorname{span}}
\def\Mink{\operatorname{Mink}}
\def\trdeg{\operatorname{trdeg}}
\def\an{\mathrm{an}}
\def\cD{{\mathcal D}}
\def\cI{{\mathcal I}}
\def\cL{{\mathcal L}}
\def\cM{{\mathcal M}}
\def\cO{{\mathcal O}}
\def\cP{{\mathcal P}}
\def\cQ{{\mathcal Q}}
\def\cS{{\mathcal S}}
\def\cU{{\mathcal U}}
\def\cT{{\mathcal T}}
\def\cW{{\mathcal W}}
\def\cX{{\mathcal X}}
\def\cY{{\mathcal Y}}
\def\cZ{{\mathcal Z}}
\def\<{{\langle}}
\def\>{{\rangle}}
\DeclareMathOperator{\conv}{conv}
\DeclareMathOperator{\codim}{codim}
\DeclareMathOperator{\length}{length}
\DeclareMathOperator{\rk}{rk}
\numberwithin{equation}{subsection}
\newtheorem{thm}[equation]{Theorem}
\newtheorem{prop}[equation]{Proposition}
\newtheorem{lem}[equation]{Lemma}
\newtheorem{cor}[equation]{Corollary}
\newtheorem*{bc}{Balancing Condition}
\newtheorem*{fdr}{Fan Displacement Rule}
\newtheorem*{thm*}{Theorem}
\newtheorem{thm2}{Theorem}[section]
\newtheorem{prop2}[thm2]{Proposition}
\theoremstyle{definition}
\newtheorem{defn}[equation]{Definition}
\newtheorem{ex}[equation]{Example}
\theoremstyle{remark}
\newtheorem{rem}[equation]{Remark}
\newtheorem{rem2}[thm2]{Remark}
\newtheorem{ex2}[thm2]{Example}
\begin{document}
\title{Lifting tropical intersections}
\author{Brian Osserman}
\address{Department of Mathematics, University of California, Davis, CA 95616}
\author{Sam Payne}
\address{Mathematics Department, Yale University, New Haven, CT 06511}

\begin{abstract}
We show that points in the intersection of the tropicalizations of subvarieties of a torus lift to algebraic intersection points with expected multiplicities, provided that the tropicalizations intersect in the expected dimension.  We also prove a similar result for intersections inside an ambient subvariety of the torus, when the tropicalizations meet inside a facet of multiplicity 1.  The proofs require not only the geometry of compactified tropicalizations of subvarieties of toric varieties, but also new results about the geometry of finite type schemes over non-noetherian valuation rings of rank 1.  In particular, we prove subadditivity of codimension and a principle of continuity for intersections in smooth schemes over such rings, generalizing well-known theorems over regular local rings.  An appendix on the topology of finite type morphisms may also be of independent interest.
\end{abstract}

\maketitle

\vspace{-20pt}

\tableofcontents

\section{Introduction}

Tropical geometry studies the valuations of solutions to polynomial equations, and may be thought of as a generalization of the theory of Newton polygons to multiple polynomials in multiple variables. It is natural to consider what conditions guarantee that if two closed subvarieties $X$ and $X'$ in a torus $T$ have points with the same valuation, then $X \cap X'$ contains a point of the given valuation. In the context of tropical geometry, this is the question of when tropicalization commutes with intersection.  The tropicalization of the intersection of two closed subvarieties of $T$ over a nonarchimedean field is contained in the intersection of their tropicalizations, but this containment is sometimes strict.  For instance, if $X$ is smooth in characteristic zero, and $X'$ is the translate of $X$ by a general torsion point $t$ in $T$, then $X$ and $X'$ are either disjoint or meet transversely, but their tropicalizations are equal.  The dimension of $X \cap X'$ is then strictly less than the dimension of $\Trop(X) \cap \Trop(X')$, so there are tropical intersection points that do not lift to algebraic intersection points.  In other cases, such as Example~\ref{ex:lines} below, the tropicalizations meet in a positive dimensional set that does not contain the tropicalization of any positive dimensional variety, so at most finitely many of the tropical intersection points can lift to algebraic intersection points.
Our main result, in its most basic form, says that the dimension of the tropical intersection is the only obstruction to such lifting.

Following standard terminology from intersection theory in algebraic geometry, we say that $\Trop(X)$ and $\Trop(X')$ meet \emph{properly} at a point $w$ if $\Trop(X) \cap \Trop(X')$ has codimension $\codim X + \codim X'$ in a neighborhood of $w$.  We say that $\Trop(X)$ meets $\Trop(X')$ properly if they meet properly at every point of their intersection, which may be empty.\footnote{Codimension is subadditive for intersections of tropicalizations, which means that $\Trop(X) \cap \Trop(X')$ has codimension at most $j + j'$ at every point, so $\Trop(X)$ meets $\Trop(X')$ properly if the intersection is either empty or has the smallest possible dimension.  See Proposition~\ref{prop:tropical subadditivity}.}

\begin{thm2} \label{basic}
Suppose $\Trop(X)$ meets $\Trop(X')$ properly at $w$.  Then $w$ is contained in the tropicalization of $X \cap X'$.
\end{thm2}

\noindent In other words, tropicalization commutes with intersections when the intersections have the expected dimension.  This generalizes a well-known result of Bogart, Jensen, Speyer, Sturmfels, and Thomas, who showed that tropicalization commutes with intersection when the tropicalizations meet transversely \cite[Lemma~3.2]{BJSST}.  Our proof, and the proofs of all of the other main results below, involves a reduction to the case where the base field is complete with respect to its nonarchimedean norm and the point $w$ is rational over the value group.  The reduction is based on the results of Appendix~\ref{app:finite type}, and should become a standard step in the rigorous application of tropical methods to algebraic geometry.  We emphasize that the theorems hold in full generality, over an arbitrary algebraically closed nonarchimedean field, and at any real point in the intersection of the tropicalizations.

Although Theorem \ref{basic} improves significantly on previously known results, it is still too restrictive for the most interesting potential applications. Frequently, $X$ and $X'$ are closed subvarieties of an ambient variety $Y$ inside the torus. In this case, one cannot hope that $\Trop(X)$ and $\Trop(X')$ will meet properly in the above sense. Instead, we say that the tropicalizations $\Trop(X)$ and $\Trop(X')$ meet properly at a point $w$ in $\Trop(Y)$ if the intersection
\[
\Trop(X) \cap \Trop(X') \subset \Trop(Y)
\]
has pure codimension $\codim_Y X + \codim_Y X'$ in a neighborhood of $w$. We extend Theorem \ref{basic} to proper intersections at suitable points of $\Trop(Y)$, as follows.

Recall that $\Trop(Y)$ is the underlying set of a polyhedral complex of pure dimension $\dim Y$, with a positive integer multiplicity assigned to each \textbf{facet}, by which we mean a maximal face.  We say that a point in $\Trop(Y)$ is \textbf{simple} if it is in the interior of a facet of multiplicity $1$.  

\begin{thm2} \label{main}
Suppose $\Trop(X)$ meets $\Trop(X')$ properly at a simple point $w$ in $\Trop(Y)$.  Then $w$ is contained in the tropicalization of $X \cap X'$.
\end{thm2}

\noindent Every point in the tropicalization of the torus is simple, so Theorem~\ref{basic} is the special case of Theorem~\ref{main} where $Y$ is the full torus $T$.   The hypothesis that $w$ be simple is necessary; see Section~\ref{sec:examples}.  We strengthen Theorem \ref{main} further by showing that where $\Trop(X)$ and $\Trop(X')$ meet properly, the facets of the tropical intersection appear with the expected multiplicities, suitably interpreted.  See Theorem~\ref{thm:main plus multiplicities} for a precise statement.

The proof of Theorem~\ref{main} is in two steps.  Roughly speaking, we lift first from tropical points to points in the initial degeneration, and then from the initial degeneration to the original variety.  Recall that the tropicalization of a closed subvariety $X$ of $T$ is the set of weight vectors $w$ such that the initial degeneration $X_w$ is nonempty in the torus torsor $T_w$ over the residue field.  

\begin{thm2}\label{thm:lift-to-degens} 
Suppose $\Trop(X)$ meets $\Trop(X')$ properly at a simple point $w$ in $\Trop(Y)$.  Then $X_w$ and $X'_w$ have nonempty proper intersection in the smooth variety $Y_w$.
\end{thm2}

\noindent If $w$ is a simple point of $\Trop(Y)$ then standard arguments show that $X_w$ meets $X'_w$ properly at every point in the intersection; the main content of the theorem is that $X_w \cap X'_w$ is nonempty.  The proof, given in Section~\ref{sec:trivial val}, uses extended tropicalizations and the intersection theory of toric varieties.  In Section~\ref{sec:dim}, we develop geometric techniques over valuation rings of rank 1 that permit lifting of such proper intersections at smooth points from the special fiber to the generic fiber.

\begin{thm2} \label{exploded main}
Suppose $X_w$ meets $X'_w$ properly at a smooth point $x$ of $Y_w$.  Then $x$ is contained in $(X \cap X')_w$.
\end{thm2}

\noindent In particular, if $X_w$ meets $X'_w$ properly at a smooth point of $Y_w$ then $w$ is contained in $\Trop(X \cap X')$.  If $w$ is defined over the value group of the nonarchimedean field then surjectivity of tropicalization says that $x$ can be lifted to a point of $X \cap X'$.  The dimension theory over valuation rings of rank 1 developed in Section~\ref{sec:dim} also gives a new proof of surjectivity of tropicalization, as well as density of tropical fibers.

Theorem~\ref{exploded main} is considerably stronger than Theorems~\ref{basic} and \ref{main}.  It often happens that initial degenerations meet properly even when tropicalizations do not.  The proof of this theorem follows standard arguments from dimension theory, but the dimension theory that is needed is not standard because the valuation rings we are working with are not noetherian.  We develop the necessary dimension theory systematically in Section~\ref{sec:dim}, using noetherian approximation to prove a version of the Krull principal ideal theorem and then deducing subadditivity of codimension for intersections in smooth schemes of finite type over valuation rings of rank 1.  Furthermore, we prove a principle of continuity, showing that intersection numbers are well-behaved in families over valuation rings of rank 1.  In Section~\ref{sec:main}, we apply this principle of continuity to prove a stronger version of Theorem~\ref{main} with multiplicities and then extend all of these results to proper intersections of three or more subvarieties of $T$.  One special case of these lifting results with multiplicities, for complete intersections, has been applied by Rabinoff in an arithmetic setting to construct canonical subgroups for abelian varieties over $p$-adic fields \cite{Rabinoff12b}.

\begin{rem2}
In addition to their intrinsic appeal, our results are motivated by the possibility of applications to proving correspondence theorems such as the one proved by Mikhalkin for plane curves \cite{Mikhalkin05}.  Mikhalkin considers curves of fixed degree and genus subject to constraints of passing through specified points.  He shows, roughly speaking, that a tropical plane curve that moves in a family of the expected dimension passing through specified points lifts to a predictable number of algebraic curves passing through prescribed algebraic points.  Here, we consider points subject to the constraints of lying inside closed subvarieties $X$ and $X'$ and prove the analogous correspondence---if a tropical point moves in a family of the expected dimension inside $\Trop(X) \cap \Trop(X')$ then it lifts to a predictable number of points in $X \cap X'$.  One hopes that Mikhalkin's correspondence theorem will eventually be reproved and generalized using Theorem~\ref{main} on suitable tropicalizations of moduli spaces of curves.  Similarly, one hopes that Schubert problems can be answered tropically using Theorem~\ref{main} on suitable tropicalizations of Grassmannians and flag varieties. The potential for such applications underlines the importance of having the flexibility to work with intersections inside ambient subvarieties of the torus.
\end{rem2}

\smallskip

\noindent \textbf{Acknowledgments.}  
We thank B.~Conrad, D.~Eisenbud, W.~Heinzer, D.~Rydh, and B.~Ulrich for helpful conversations, and are most grateful to J.~Rau and M.~Rojas for insightful comments on an earlier draft of this work. We also thank the referee for a careful reading.

\bigskip

\noindent The first author was partially supported by a fellowship from the National Science Foundation.  The second author was partially supported by the Clay Mathematics Institute and NSF DMS grant 1068689.  Part of this research was carried out during the special semesters on Algebraic Geometry and Tropical Geometry at MSRI, in Spring and Fall 2009.

\section{Preliminaries}\label{sec:prelim}

Let $K$ be an algebraically closed field, and let $\nu:K^* \to \RR$ be a valuation with value group $G$.  Let $R$ be the valuation ring, with maximal ideal $\fm$, and residue field $k = R/\fm$.  Since $K$ is algebraically closed, the residue field $k$ is algebraically closed and the valuation group $G$ is divisible.  In particular, $G$ is dense in $\RR$ unless the valuation $\nu$ is trivial, in which case $G$ is zero.  Typical examples of such nonarchimedean fields in equal characteristic are given by the generalized power series fields $K=k((t^G))$, whose elements are formal power series with coefficients in the algebraically closed field $k$ and exponents in $G$, where the exponents occurring in any given series are required to be well-ordered \cite{Poonen93}.

Let $\cT$ be an algebraic torus of dimension $n$ over $R$, with character lattice $M \cong \ZZ^n$, and let $N = \Hom(M,\ZZ)$ be the dual lattice.  We write $N_G$ for $N \otimes G$, and $N_{\RR}$ for $N \otimes \RR$, the real vector space of linear functions on the character lattice.  We treat $M$ and $N$ additively and write $x^u$ for the character associated to a lattice point $u$ in $M$, which is a monomial in the Laurent polynomial ring $K[M]$.  We write $T$ for the associated torus over $K$.

In this section, we briefly review the basic properties of initial degenerations and tropicalizations, as well as the relationship between tropical intersections and intersections in suitable toric compactifications of $T$.

\subsection{Initial degenerations} Each vector $w$ in $N_\RR$ determines a weight function on monomials, where the $w$-weight of $a x^u$ is $\nu(a) + \<u, w\>$.  The tilted group ring $R[M]^w \subset K[M]$ consists of the Laurent polynomials $\sum a_u x^u$ in which every monomial has nonnegative $w$-weight.  We then define $\cT^w = \Spec R[M]^w$.  If $\nu$ is nontrivial, then $\cT^w$ is an integral model of $T$, which means that it is a scheme over $R$ whose generic fiber $\cT^w \times_{\Spec R} \Spec K$ is naturally identified with $T$.  In general, the scheme $\cT^w$ carries a natural $\cT$-action, and is a $\cT$-torsor if $w$ is in $N_G$.

\begin{rem}
If $\nu$ is nontrivial and $w$ is not in $N_G$, or if $\nu$ is trivial and $w$ does not span a rational ray, then $\cT^w$ is not of finite type over $\Spec R$.  Some additional care is required in handling these schemes, but no major difficulties arise for the purposes of this paper.  The special fiber $\cT^w_k$ is still finite type over $k$, and is a torsor over a quotient torus of $T_k$.  The basic properties of the schemes $\cT^w$ and their closed subschemes may be understood by passing to a valued extension field with value group $G'$ such that $w$ is in $N_{G'}$ and analyzing how these schemes and their special fibers transform under such extensions. This analysis is carried out in Appendix~\ref{app:finite type}.  See, in particular, Theorems~\ref{thm:integral base change} and \ref{thm:initial base change}, and Remark~\ref{rmk:properties of base change}.
\end{rem}

For arbitrary $w$ in $N_\RR$, the scheme $\cT^w$ is reduced and irreducible, flat over $\Spec R$, and contains $T$ as a dense open subscheme.  We define $T_w$ to be the closed subscheme cut out by monomials of strictly positive $w$-weight.  If the valuation is nontrivial, then $T_w$ is the special fiber of $\cT^w$.

Let $X$ be a closed subscheme of $T$, of pure dimension $d$.  Let $\cX^w$ denote the closure of $X$ in $\cT^w$.

\begin{defn} 
The \textbf{initial degeneration} $X_w$ is the closed subscheme of $T_w$ obtained by intersecting with $\cX^w$.
\end{defn}

\noindent The terminology reflects the fact that $X_w$ is cut out by residues of initial terms (lowest-weight monomials) of Laurent polynomials in the ideal of $X$.  If the valuation is nontrivial, then $\cX^w$ is an integral model of $X$, and $X_w$ is the special fiber of this model.

One consequence of Gr\"obner theory is that the space of weight vectors $N_\RR$ can be decomposed into finitely many polyhedral cells so that the initial degenerations are essentially invariant at points of $N_G$ on the relative interior of each cell, as discussed in more detail in the following subsection. Roughly speaking, the cells are cut out by inequalities whose linear terms have integer coefficients, and whose constant terms are in the value group $G$.  More precisely, the cells are integral $G$-affine polyhedra, defined as follows.

\begin{defn} 
An \textbf{integral $G$-affine polyhedron} in $N_\RR$ is the solution set of a finite number of inequalities 
\[
\<u,v\> \leq b,
\]
 with $u$ in the lattice $M$ and $b$ in the value group $G$.  
\end{defn}
 
\noindent An integral $G$-affine polyhedral complex $\Sigma$ is a polyhedral complex consisting entirely of integral $G$-affine polyhedra.  In other words, it is a finite collection of integral $G$-affine polyhedra such that every face of a polyhedron in $\Sigma$ is itself in $\Sigma$, and the intersection of any two polyhedra in $\Sigma$ is a face of each.  Note that if $G$ is zero, then an integral $G$-affine polyhedron is a rational polyhedral cone, and an integral $G$-affine polyhedral complex is a fan.

\subsection{Tropicalization} \label{sec:tropicalization}
Following Sturmfels, we define the \textbf{tropicalization} of $X$ to be
\[
\Trop(X) = \{ w \in N_\RR \ | \ X_w \mbox{ is nonempty}\}.
\]
The foundational theorems of tropical geometry, due to the work of many authors, are the following.\footnote{See \cite{BieriGroves84}, \cite{SpeyerSturmfels04}, and Propositions~2.1.4, Theorem~2.2.1, and Proposition~2.4.5 of \cite{SpeyerThesis} for proofs of (1) and (2).  The first proposed proof of (3), due to Speyer and Sturmfels, contained an essential gap that is filled by Theorem~\ref{thm:closed-pts}, below.  Other proofs of (3) have appeared in \cite[Theorem~4.2]{Draisma08}, \cite[Proposition~4.14]{Gubler12} and \cite{tropicalfibers, TropicalFibers-Correction}. See also Remark~\ref{rem:second gap}.}

\begin{enumerate}
\item The tropicalization $\Trop(X)$ is the underlying set of an integral $G$-affine polyhedral complex of pure dimension $d$.
\item The integral $G$-affine polyhedral structure on $\Trop(X)$ can be chosen so that the initial degenerations $X_w$ and $X_{w'}$ are $\cT_k$-affinely equivalent for any $w$ and $w'$ in $N_G$ in the relative interior of the same face.
\item The image of $X(K)$ under the natural tropicalization map $\trop: T(K) \rightarrow N_\RR$ is exactly $\Trop(X) \cap N_G$.  
\end{enumerate}

\noindent Here, two subschemes of the $\cT_k$-torsors $T_w$ and $T_{w'}$ are said to be $\cT_k$-affinely equivalent if they are identified under some $\cT_k$-equivariant choice of isomorphism $T_w \cong T_{w'}$.  In fact, a stronger statement holds: for any points $w$ and $w'$ in $N_G$, there exists a $\cT_k$-equivariant isomorphism $T_w \cong T_{w'}$ which sends $X_w$ to $X_{w'}$ for all $X$ such that $w$ and $w'$ are in the relative interior of the same face of $\Trop(X)$.  If the valuation is nontrivial, then it follows from (3) that $\Trop(X)$ is the closure of the image of $X(K)$ under the tropicalization map.

For any extension of valued fields $L|K$, the tropicalization of the base change $\Trop(X_L)$ is exactly equal to $\Trop(X)$ \cite[Proposition~6.1]{analytification}.  In particular, for any algebraically closed extension $L|K$ with nontrivial valuation, $\Trop(X)$ is the closure of the image of $X(L)$.  Furthermore, if we extend to some $L$ that is complete with respect to its valuation then the tropicalization map on $X(L)$ extends naturally to a continuous map on the nonarchimedean analytification of $X_L$, in the sense of Berkovich \cite{Berkovich90}, whose image is exactly $\Trop(X)$ \cite{analytification}.  It follows, by reducing to the case of a complete field, that $\Trop(X)$ is connected if $X$ is connected, since the analytification of a connected scheme over a complete nonarchimedean field is connected.

\begin{rem}
The natural tropicalization map from $T(K)$ to $N_\RR$ takes a point $t$ to the linear function $u \mapsto \nu \circ \ev_t x^u$, and can be understood as a coordinatewise valuation map, as follows.  The choice of a basis for $M$ induces isomorphisms  $T\cong (K^*)^n$ and $N_{\RR} \cong \RR^n$.  In such coordinates, the tropicalization map sends $(t_1,\dots, t_n)$ to  $(\nu(t_1),\dots,\nu(t_n))$.
\end{rem}

There is no canonical choice of polyhedral structure on $\Trop(X)$ satisfying (2) in general, but any refinement of such a complex again satisfies (2).  Throughout the paper, we assume that such a polyhedral complex with underlying set $\Trop(X)$ has been chosen.  We refer to its faces and facets as faces and facets of $\Trop(X)$, and refine the complex as necessary.

The tropicalization of a closed subscheme of a torus torsor over $K$ is well-defined up to translation by $N_G$.  In particular, if the value group is zero, then the tropicalization is well-defined, and is the underlying set of a rational polyhedral fan.  An important special case is the tropicalization of an initial degeneration.  The valuation $\nu$ induces the trivial valuation on the residue field $k$, and there is a natural identification of $\Trop(X_w)$ with the star of $w$ in $\Trop(X)$ \cite[Proposition~2.2.3]{SpeyerThesis}.  This star is, roughly speaking, the fan one sees looking out from $w$ in $\Trop(X)$; it is constructed by translating $\Trop(X)$ so that $w$ is at the origin and taking the cones spanned by faces of $\Trop(X)$ that contain $w$.

\subsection{Tropical multiplicities} \label{sec:tropical mults}
If $w$ and $w'$ are points of $N_G$ in the relative interior of the same cone $\sigma$, then $X_w$ and $X'_w$ are $\cT_k$-affinely equivalent.  In particular, they are isomorphic as schemes, so the sum of the multiplicities of their irreducible components are equal.  By Theorem~\ref{thm:initial base change}, the sum of the multiplicities of the irreducible components of the initial degeneration at $w$ is invariant under extensions of valued fields, for any $w$ in $N_\RR$.  Since any point in $N_\RR$ becomes rational over the value group after a suitable extension, it follows that this sum is independent of the choice of $w$ in the relative interior of $\sigma$.  We will be concerned with this sum only in the case where $\sigma$ is a facet.  For applications of tropical multiplicities at points that are not in the relative interior of a facet, see \cite{BPR11}.

\begin{defn}
The \textbf{tropical multiplicity} $m(\sigma)$ of a facet $\sigma$ in $\Trop(X)$ is the sum of the multiplicities of the irreducible components of $X_w$, for $w$ in the relative interior of $\sigma$.
\end{defn}

For $w \in N_G$, the multiplicities on the facets of $\Trop(X_w)$ agree with those on the facets of $\Trop(X)$ that contain $w$.  This is because the initial degeneration of $X_w$ at a point in the relative interior of a given facet is $\cT_k$-affinely equivalent to the initial degeneration of $X$ at a point in the relative interior of the corresponding facet of $\Trop(X)$.  See \cite[Proposition~2.2.3]{SpeyerThesis} and \cite[Proposition~10.9]{Gubler12}.

Points in the relative interiors of facets of multiplicity 1 will be particularly important for our purposes.

\begin{defn} A \textbf{simple point} in $\Trop(X)$ is a point in the relative interior of a facet of multiplicity 1.
\end{defn}

\noindent If $w \in N_G$ is a simple point, then the initial degeneration $X_w$ is isomorphic to a $d$-dimensional torus \cite[Proposition~2.2.4]{SpeyerThesis}, and initial degenerations at arbitrary simple points are isomorphic to tori of dimension at most $d$.  The latter follows from the case where $w$ is in $N_G$ by choosing a suitable extension of valued fields and applying Theorem~\ref{thm:initial base change}.  In particular, the initial degeneration at an arbitrary simple point is smooth.  

The importance of simple points, roughly speaking, is that intersections of tropicalizations of closed subvarieties of $X$ at simple points behave just like intersections in $N_\RR$ of tropicalizations of closed subvarieties of $T$.  See, for instance, the proof of Theorem~\ref{main} in Section~\ref{sec:main}.

\subsection{Minkowski weights and the fan displacement rule} \label{sec:Mink}

We briefly review the language of Minkowski weights and their ring structure, in which the product is given by the fan displacement rule.  These tools are the basis of both toric intersection theory, as developed in \cite{FultonSturmfels97}, and the theory of stable tropical intersections \cite{RST, Mikhalkin06}, discussed in the following sections.  We refer the reader to these original papers for further details.  Although the theory of Minkowski weights extends to arbitrary complete fans, we restrict to unimodular fans, corresponding to smooth toric varieties, which suffice for our purposes.  Fix a complete unimodular fan $\Sigma$ in $N_{\RR}$.

\begin{defn} A {\it Minkowski weight} of codimension $j$ on $\Sigma$ is a function $c$ that assigns an integer $c(\sigma)$ to each codimension $j$ cone $\sigma$ in $\Sigma$, and satisfies the following balancing condition at every codimension $j+1$ cone.
\end{defn}

\begin{bc}
Let $\tau$ be a cone of codimension $j+1$ in $\Sigma$.  Say $\sigma_1, \ldots, \sigma_r$ are the codimension $j$ cones of $\Sigma$ that contain $\tau$, and let $v_i$ be the primitive generator of the image of $\sigma_i$ in $N_\RR/ \Span(\tau)$.  Then $c$ is balanced at $\tau$ if
\[
c(\sigma_1) v_1  + \cdots + c(\sigma_r) v_r = 0
\]
in $N_\RR / \Span(\tau)$.
\end{bc}

\noindent Tropicalizations are one interesting source of Minkowski weights.  If $X$ is a closed subscheme of pure codimension $j$ in $T$ and $\Trop(X)$ is a union of cones in $\Sigma$, then the function $c$ given by
\[
c(\sigma) = \left \{ \begin{array}{ll} m(\sigma) & \mbox{ if } \sigma \subset \Trop(X). \\
							0			 &  \mbox{ otherwise.}
							\end{array} \right.
\]
satisfies the balancing condition, and hence is a Minkowski weight of codimension $j$.  This correspondence between tropicalizations and Minkowski weights is compatible with intersection theory in the toric variety $Y(\Sigma)$.  See Section~\ref{sec:compatibility}, below.

We write $\Mink^j(\Sigma)$ for the group of Minkowski weights of codimension $j$ on $\Sigma$.  The direct sum
\[
\Mink^*(\Sigma) = \bigoplus_j \Mink^j(\Sigma)
\]
naturally forms a graded ring, whose product is given by the fan displacement rule, as follows.  Let $v$ be a vector in $N_\RR$ that is sufficiently general so that for any two cones $\sigma$ and $\sigma'$ in $\Sigma$, the displaced cone $\sigma' + v$ meets $\sigma$ properly.  The set of such vectors is open and dense in $N_\RR$; it contains the complement of a finite union of linear spaces.  For each cone $\sigma$ in $\Sigma$, let $N_\sigma$ denote the sublattice of $N$ generated by $\sigma \cap N$.  Note that if $\sigma$ 
intersects the displaced cone $\sigma'+v$ then $\sigma$ and $\sigma'$ together span $N_\RR$, so the index $[N : N_\sigma + N_{\sigma'}]$ is finite.

\begin{fdr}
The product $c \cdot c'$ is the Minkowski weight of co\-di\-mension $j + j'$ given by
\[
(c \cdot c')(\tau) = \sum_{\sigma, \sigma'} \ [N : N_\sigma + N_{\sigma'}] \cdot c(\sigma) \cdot c'(\sigma'),
\]
where the sum is over cones $\sigma$ and $\sigma'$ containing $\tau$, of codimensions $j$ and $j'$ respectively, such that the intersection $\sigma \cap (\sigma' + v)$ is nonempty.
\end{fdr}

\noindent The balancing conditions on $c$ and $c'$ ensure that the product is independent of the choice of displacement vector $v$, and also satisfies the balancing condition.

\subsection{Toric intersection theory}\label{sec:toric-intersect}

We briefly recall the basics of intersection theory in smooth complete toric varieties.  As in the previous section, we fix a complete unimodular fan $\Sigma$ in $N_\RR$.  Let $Y(\Sigma)$ be the associated smooth complete toric variety.  A codimension $j$ cycle in $Y(\Sigma)$ is a formal sum of codimension $j$ closed subvarieties, with integer coefficients, and we write $A^j(Y(\Sigma))$ for the Chow group of codimension $j$ cycles modulo rational equivalence.  Let $X$ be a closed subscheme of pure codimension $j$ in $Y(\Sigma)$.  We write $[X]$ for the class it represents in $A^j(Y(\Sigma))$; by definition, this is the sum over the irreducible components $Z$ of $X$ of the length of $X$ along $Z$ times the class $[Z]$.

Intersection theory gives a natural ring structure on the direct sum
\[
A^*(Y(\Sigma)) = \bigoplus_j A^j(Y(\Sigma)).
\]
If $X$ and $X'$ have complementary dimension, which means that $\dim X + \dim X' = \dim Y(\Sigma)$, then the product $[X] \cdot [X']$ is a zero-dimensional cycle class.  The \textbf{degree} of this class, which is the sum of the coefficients of any representative cycle, is denoted $\deg([X] \cdot [X'])$.  In particular, if $\sigma$ is a cone of codimension $j$ in $\Sigma$, with $V(\sigma)$ the associated closed $T$-invariant subvariety, then $X$ and $V(\sigma)$ have complementary dimension.  We write $c_X$ for the induced function on codimension $j$ cones, given by
\[
c_X(\sigma) = \deg([X] \cdot [V(\sigma)]).
\]
The fact that the intersection product respects rational equivalence ensures that $c_X$ satisfies the balancing condition and is therefore a Minkowski weight of codimension $j$ on $\Sigma$.  The main result of \cite{FultonSturmfels97} then says that there is a natural isomorphism of rings
\[
A^*(Y(\Sigma)) \xrightarrow{\sim} \Mink^*(\Sigma)
\]
taking the Chow class $[X]$ to the Minkowski weight $c_X$.  

\smallskip

The theory of refined intersections says that the product $[X] \cdot [X']$ of the classes of two pure-dimensional closed subschemes of a smooth variety $Y$ is not only a well-defined cycle class in that smooth variety, but also the Gysin push-forward of a well-defined cycle class in the intersection $X \cap X'$.  If $X$ and $X'$ meet properly, then this class has codimension zero, and hence can be written uniquely as a formal sum of the components of $X \cap X'$.  The coefficient of a component $Z$ of the intersection is called the \textbf{intersection multiplicity} of $X$ and $X'$ along $Z$, and is denoted $i(Z, X \cdot X'; Y)$.  It is a positive integer, less than or equal to the length of the scheme theoretic intersection of $X$ and $X'$ along the generic point of $Z$, by \cite[Proposition~7.1]{IT}, and can be computed by Serre's alternating sum formula
\[
i(Z,X \cdot X';Y)=\sum_j (-1)^j \length_{\cO_{Y,Z}} \Tor^j_{\cO_{Y,Z}}(\cO_X, \cO_{X'}).
\]
See \cite[Chapter~8]{IT} for further details on the theory of refined intersections in smooth varieties.

\begin{defn}
If $X$ and $X'$ meet properly in a smooth variety $Y$, then the \textbf{refined intersection cycle} is
\[
X \cdot X' = \sum_Z i(Z, X \cdot X'; Y) \, Z,
\]
where the sum is over all irreducible components $Z$ of $X \cap X'$.
\end{defn}

\noindent We will consider such refined intersections inside the torus $T$, as well as in its smooth toric compactifications. Finally, it is useful to work not only with tropicalizations of closed subschemes of $T$, but also with tropicalizations of these refined intersection cycles.

\begin{defn} \label{def:cycles}
Let $a_1 Z_1 + \cdots + a_r Z_r$ be a pure-dimensional cycle in $T$, with positive integer coefficients $a_i$. The \textbf{tropicalization} of this cycle is the union
\[
\Trop(a_1 Z_1 + \cdots + a_r Z_r) =  \Trop(Z_1) \cup \cdots \cup \Trop(Z_r)
\]
with multiplicities
\[
m(\tau) = a_1 m_{Z_1}(\tau) + \cdots + a_r m_{Z_r}(\tau),
\]
where $m_{Z_i}(\tau)$ is defined to be zero if $\Trop(Z_i)$ does not contain $\tau$.
\end{defn}

\subsection{Stable tropical intersections}  \label{sec:stable}

Many papers have introduced and studied analogues of intersection theory in tropical geometry, including \cite{AllermannRau10, BrugalleLopez11, Katz12, Mikhalkin06, RST, Tabera08}.  Here we are interested in only one basic common feature of these theories, the stable tropical intersection, and its compatibility with intersection theory in toric varieties.  The key point is that stable tropical intersections are defined combinatorially, depending only on the polyhedral geometry of the tropicalizations, by a local displacement rule.  Compatibility with the multiplication rule for Minkowski weights and algebraic intersections of generic translates is discussed in the following section.

Given closed subschemes $X$ and $X'$ of $T$, of pure codimensions $j$ and $j'$, respectively, the stable intersection $\Trop(X) \cdot \Trop(X')$ is a polyhedral complex of pure codimension $j + j'$, with support contained in the set-theoretic intersection $\Trop(X) \cap \Trop(X')$, and with appropriate multiplicities on its facets, satisfying the balancing condition.  Many different closed subschemes of $T$ may have the same tropicalization, but the stable intersection depends only on the tropicalizations. 

Suppose the valuation is trivial, so $\Trop(X)$ and $\Trop(X')$ are fans, and let $\tau$ be a face of $\Trop(X) \cap \Trop(X')$ of codimension $j + j'$ in $N_\RR$.  The \textbf{tropical intersection multiplicity} of $\Trop(X)$ and $\Trop(X')$ along $\tau$, is given by the fan displacement rule
\[
i(\tau, \Trop(X) \cdot \Trop(X')) = \sum_{\sigma, \sigma'} [N: N_\sigma + N_{\sigma'}] \cdot m(\sigma) \cdot m(\sigma'),
\]
where the sum is over facets $\sigma$ and $\sigma'$ of $\Trop(X)$ and $\Trop(X')$, respectively, such that the intersection $\sigma \cap (\sigma' + v)$ is nonempty, and $v \in N$ is a fixed generic displacement vector, as in Section~\ref{sec:Mink}.

\smallskip

In the general case, where the valuation may be nontrivial, the tropical intersection multiplicities are defined similarly, by a local displacement rule, as follows.  Let $\tau$ be a face of codimension $j + j'$ in $\Trop(X) \cap \Trop(X')$, and for each face $\sigma$ containing $\tau$, let $N_\sigma$ be the sublattice of $N$ parallel to the affine span of $\sigma$.

\begin{defn}
The tropical intersection multiplicity $i(\tau, \Trop(X) \cdot \Trop(X'))$ is given by the \textbf{local displacement rule}
\[
i(\tau, \Trop(X) \cdot \Trop(X')) = \sum_{\sigma, \sigma'} [N: N_\sigma + N_{\sigma'}] \cdot m(\sigma) \cdot m(\sigma'),
\]
where $v \in N$ is a generic displacement vector, as above, and the sum is over facets $\sigma$ and $\sigma'$ of $\Trop(X)$ and $\Trop(X')$, respectively, such that the intersection $\sigma \cap (\sigma' + \epsilon v)$ is nonempty for $\epsilon$ sufficiently small and positive.
\end{defn}

\begin{defn}
 The \textbf{stable tropical intersection}, denoted $\Trop(X) \cdot \Trop(X')$, is the union of those faces $\tau$ such that $i(\tau, \Trop(X) \cdot \Trop(X'))$ is positive, weighted by their tropical intersection multiplicities.
\end{defn}

\subsection{Compatibility of toric and stable tropical intersections}  \label{sec:compatibility}

As discussed in Section~\ref{sec:Mink}, tropical multiplicities satisfy the balancing condition, and hence give Minkowski weights.  The stable tropical intersections are compatible with multiplication of Minkowski weights, and algebraic intersections of generic translates in the torus, as follows.

Let $X$ be a closed subscheme of pure codimension $j$ in $T$.  Suppose the valuation is trivial, so $\Trop(X)$ is a fan.  After subdividing, we may assume this fan is unimodular, and extends to a complete unimodular fan $\Sigma$ in $N_\RR$.  Let $Y(\Sigma)$ be the associated smooth complete toric variety, and let $\overline X$ be the closure of $X$ in $Y(\Sigma)$.  Because $\Sigma$ contains $\Trop(X)$ as a subfan, the closure $\overline X$ meets every torus orbit $O_\tau$ properly, and the intersection is nonempty if and only if $\tau$ is contained in $\Trop(X)$.  This follows from the basic properties of extended tropicalizations, given in \cite[Proposition~3.7]{analytification}.

\begin{rem} \label{rem:meeting-orbits}
Indeed the closure $\overline X$ is proper and meets every torus orbit properly if and only if $\Sigma$ contains a subfan whose support is $\Trop(X)$.  See \cite[Section~14]{Gubler12}.  
 \end{rem}
 
 Furthermore, for any facet $\sigma$ of $\Trop(X)$, the tropical multiplicity $m(\sigma)$ is equal to the intersection number of the closure $\overline X$ with the torus invariant subvariety corresponding to $\sigma$, that is
\[
m(\sigma) = \deg([\overline X] \cdot V(\sigma)).
\]
In other words, 
\[
c_{\overline X}(\sigma) = \left \{ \begin{array}{ll} m(\sigma) & \mbox{ if } \sigma \subset \Trop(X). \\
											0  & \mbox{ otherwise.}
											\end{array} \right.
\]
See \cite[Lemma~3.2]{SturmfelsTevelev08} for the case of a tropical compactification and \cite[Lemma~2.3]{KatzPayne11} for the general case.

\begin{lem}\label{lem:linearity}
Suppose the valuation is trivial, and let $X$ be a pure-dimensional closed subscheme of $T$.  Then the tropicalization $\Trop(X)$ is equal to the tropicalization of the fundamental cycle $\Trop([X])$.
\end{lem}

Recall that the multiplicities of facets in the tropicalization of a cycle are defined by linearity; see Definition~\ref{def:cycles}.

\begin{proof}
The tropicalizations agree set-theoretically, because both are the closures of the images of $X(L)$ where $L|K$ is an algebraically closed valued extension field with nontrivial valuation. The multiplicities agree because the tropical multiplicity of a facet $\sigma$ in $\Trop(X)$ is $m(\sigma) = \deg([\overline X] \cdot V(\sigma))$, and this degree is a linear function of the cycle $[\overline X]$.
\end{proof}

\begin{rem}
The tropicalization of a pure-dimensional closed subscheme of $T$ also agrees with the tropicalization of its underlying cycle in the case of a nontrivial valuation, but this requires considerably more work.  This is deduced from our general results on intersection multiplicities over valuation rings of rank 1 in Corollary~\ref{cor:cycles}. See also \cite[Section~13]{Gubler12} for an analytic proof using nonarchimedean GAGA and Gubler's theory of cycles and Cartier divisors on affinoid spaces.
\end{rem}

Now consider the general case, where the valuation may be nontrivial, and let $w \in N_G$ be a point of $\Trop(X)$.  After subdividing, we may assume that the star of $w$ in $\Trop(X)$ is unimodular, and extends to a complete unimodular fan $\Sigma$.  We write $\overline X_w$ for the closure of $X_w$ in $Y(\Sigma)$.  

\begin{defn}
If $\sigma$ is a facet of $\Trop(X)$ that contains $w$ then let 
\[
\sigma_w = \RR_{\geq 0} (\sigma - w)
\]
be the corresponding cone in $\Sigma$.  
\end{defn}

Recall that the star of $w$ in $\Trop(X)$ is the tropicalization of the initial degeneration $X_w$ and, as discussed in Section~\ref{sec:tropical mults}, the initial degeneration of $X_w$ at a point in the relative interior of $\sigma_w$ is $\cT_k$-affinely equivalent to the initial degeneration of $X$ at a point of $N_G$ in the relative interior of $\sigma$.  In particular, the tropical multiplicity $m(\sigma_w)$ in $\Trop(X_w)$ is equal to the tropical multiplicity $m(\sigma)$.

\begin{lem}
Let $\gamma$ be a face of codimension $j$ in $\Sigma$.  Then
\[
c_{\overline X_w}(\gamma) = \left \{ \begin{array}{ll} m(\sigma) & \mbox{ if } \gamma = \sigma_w, \mbox{ for some } \sigma \subset \Trop(X). \\
											0  & \mbox{ otherwise.}
											\end{array} \right.
											\]
\end{lem}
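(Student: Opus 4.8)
The plan is to reduce the statement about $\overline{X}_w$ in the trivially valued setting to the corresponding statement about $\overline{X}$ proved in the displayed formula preceding the lemma, via the identification of $\Trop(X_w)$ with the star of $w$ in $\Trop(X)$. Recall from the preliminaries that $X_w$ is a subscheme of the torus torsor $T_w$ over the residue field $k$, which carries the trivial valuation, so the discussion of the trivially valued case applies directly to $X_w$. Moreover the passage ``For $w \in N_G$, the multiplicities on the facets of $\Trop(X_w)$ agree with those on the facets of $\Trop(X)$ that contain $w$'' tells us precisely that $m_{\Trop(X_w)}(\sigma_w') = m_{\Trop(X)}(\sigma')$ for every facet $\sigma'$ of $\Trop(X)$ containing $w$, where $\sigma_w' = \RR_{\geq 0}(\sigma' - w)$ is the corresponding cone in the star.

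The steps I would carry out are as follows. First, fix the complete unimodular fan $\Sigma$ refining the star of $w$ in $\Trop(X)$, and note that $\Trop(X_w)$ is a union of cones of $\Sigma$ of pure dimension $\dim X$; its facets are exactly the cones of the form $\sigma_w'$ for $\sigma'$ a facet of $\Trop(X)$ through $w$. Second, apply the displayed formula that precedes the lemma, with $X_w$ in place of $X$ and $k$ (trivially valued) in place of $K$: this gives $c_{\overline{X}_w}(\sigma) = m_{\Trop(X_w)}(\sigma)$ when $\sigma \subset \Trop(X_w)$ and $c_{\overline{X}_w}(\sigma) = 0$ otherwise. I should double-check that the cited input, \cite[Lemma~3.2]{SturmfelsTevelev08}, indeed applies to subschemes of a torus torsor and not only a split torus; since $k$ is algebraically closed the torsor is trivializable, and the intersection numbers and tropical multiplicities are unchanged under an equivariant trivialization, so this is harmless. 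Third, translate ``$\sigma \subset \Trop(X_w)$'' into ``$\sigma = \sigma_w'$ for some facet $\sigma' \subset \Trop(X)$'' — any cone of $\Sigma$ contained in $\Trop(X_w)$ of codimension $j$ is such a $\sigma_w'$ because $\Trop(X_w)$ is the star and $\Sigma$ refines it — and replace $m_{\Trop(X_w)}(\sigma_w')$ by $m_{\Trop(X)}(\sigma')$ using the agreement of facet multiplicities between $\Trop(X_w)$ and $\Trop(X)$ recalled above. This yields exactly the claimed formula.

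One small subtlety to handle carefully is the case where $w$ does not span a rational ray, so that $\cT^w$ — and hence a priori $\overline{X}_w$ — need not be of finite type; but the lemma concerns only the special fiber, and as noted in the footnote in Section~\ref{sec:prelim}, $\cT^w_k$ is of finite type over $k$ and $T_w$ is a torsor over a quotient torus of $T_k$. Since $X_w$ lives in this finite-type $T_w$, $\overline{X}_w$ is an honest closed subvariety of the smooth complete toric variety $Y(\Sigma)$ and the toric intersection theory of Section~\ref{sec:compatibility} applies without modification. I expect the main (and really the only) obstacle to be bookkeeping: making sure the indexing of cones in the abstract fan $\Sigma$, the cones $\sigma_w'$ in the star, and the facets $\sigma'$ of $\Trop(X)$ is kept consistent, and checking that codimension-$j$ cones of $\Sigma$ not of the form $\sigma_w'$ genuinely fail to lie in $\Trop(X_w)$ so that the ``otherwise'' branch is correctly stated. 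No genuinely new idea is needed beyond assembling the facts already established in the excerpt.
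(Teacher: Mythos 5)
Your proposal is correct and follows essentially the same route as the paper: the paper's proof is the one-line observation that the lemma follows from the trivial-valuation formula applied to $X_w$, using that $m(\sigma')$ equals the tropical multiplicity $m(\sigma'_w)$ in $\Trop(X_w)$. The extra checks you flag (trivializing the torsor, finite-typeness of the special fiber) are harmless and already implicit in the paper's setup.
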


\begin{proof}
This follows from the case of the trivial valuation and the fact that the tropical multiplicity $m(\sigma)$ in $\Trop(X)$ is equal to the tropical multiplicity $m(\sigma_w)$ in $\Trop(X_w)$.
\end{proof}

This correspondence between tropical multiplicities and Minkowski weights is compatible with intersections; stable tropical intersections corresponding to products of Minkowski weights, as follows.  Suppose $X'$ is a closed subscheme of pure codimension $j'$ in $T$, and $\tau$ is a face of codimension $j + j'$ in $\Trop(X) \cap \Trop(X')$ that contains $w$.  We may assume that the star of $w$ in $\Trop(X')$ is a subfan of the unimodular fan $\Sigma$, and we write $\overline X'_w$ for the closure of $X'_w$ in $Y(\Sigma)$.

\begin{prop} \label{prop:local int}
Let $\gamma$ be a cone of codimension $j + j'$ in $\Sigma$.  Then
\[
(c_{\overline X_w} \cdot c_{\overline X'_w})(\gamma) = \left \{ \begin{array}{ll} i(\tau, \Trop(X) \cdot \Trop(X')) & \mbox{ if } \gamma = \tau_w, \\
											0  & \mbox{ otherwise,}
											\end{array} \right.
											\]
\end{prop}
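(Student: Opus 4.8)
The statement asserts that the product of the Minkowski weights $c_{\overline X_w}$ and $c_{\overline X'_w}$, computed via the fan displacement rule, records the tropical intersection multiplicities $i(\tau', \Trop(X) \cdot \Trop(X'))$ of the stable tropical intersection at the corresponding face $\tau'$ with $\tau = \tau'_w$. The plan is to reduce everything to the star of $w$ and then compare the two combinatorial rules —the fan displacement rule defining the product in $\Mink^*(\Sigma)$ and the local displacement rule defining tropical intersection multiplicities— term by term.

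First I would invoke the previous lemma, which identifies $c_{\overline X_w}$ with the Minkowski weight supported on the cones $\sigma_w = \RR_{\geq 0}(\sigma - w)$ for facets $\sigma$ of $\Trop(X)$ containing $w$, carrying multiplicity $m(\sigma)$, and similarly $c_{\overline X'_w}$ is supported on the $\sigma'_w$ with multiplicity $m'(\sigma')$. Thus both sides are visibly zero unless $\tau = \tau'_w$ for some face $\tau'$ of $\Trop(X) \cap \Trop(X')$ of codimension $j + j'$ containing $w$, which handles the ``otherwise'' case. For the nontrivial case, I would write out the fan displacement rule:
\[
(c_{\overline X_w} \cdot c_{\overline X'_w})(\tau) = \sum_{\sigma_w, \sigma'_w} [N : N_{\sigma_w} + N_{\sigma'_w}] \cdot m(\sigma) \cdot m'(\sigma'),
\]
the sum over codimension-$j$ and codimension-$j'$ cones of $\Sigma$ containing $\tau$ whose relevant displaced intersection $\sigma_w \cap (\sigma'_w + v)$ is nonempty for a fixed generic $v$, using that only the cones of the form $\sigma_w$, $\sigma'_w$ contribute by the lemma. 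The heart of the matter is then to match this, summand by summand, with the local displacement rule defining $i(\tau', \Trop(X) \cdot \Trop(X'))$, whose sum runs over facets $\sigma, \sigma'$ of $\Trop(X), \Trop(X')$ containing $\tau'$ with $\sigma \cap (\sigma' + \epsilon v)$ nonempty for small positive $\epsilon$.

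Two identifications are needed. The lattice indices agree because $N_{\sigma_w}$ is the sublattice of $N$ generated by $\sigma_w \cap N = (\RR_{\geq 0}(\sigma - w)) \cap N$, which is precisely $N_\sigma$, the sublattice parallel to the affine span of $\sigma$ used in the local displacement rule (here one uses that $w \in N_G$ and that $\Sigma$ refines the star of $w$, so the cones in question are genuine rational cones). The combinatorial conditions also agree: translating $\Trop(X)$ and $\Trop(X')$ so that $w$ is at the origin and taking cones, the condition that $\sigma \cap (\sigma' + \epsilon v)$ be nonempty for all small $\epsilon > 0$ is exactly the condition that the corresponding cones $\sigma_w$ and $\sigma'_w$ in the star satisfy $\sigma_w \cap (\sigma'_w + v) \neq \emptyset$ for the fixed generic $v$ — this is the standard ``passage from local displacement near $w$ to displacement of the star at the origin'' observation, and it is the step I expect to require the most care, since one must check that a single generic $v$ can be chosen to witness all the relevant nonempty intersections simultaneously and that the ``sufficiently small $\epsilon$'' quantifier in the definition of the local displacement rule is insensitive to which facets are being compared. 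Genericity of $v$ (avoiding a finite union of linear subspaces, as in Section~\ref{sec:Mink}) guarantees this. Once these two identifications are in place, the two sums are literally the same, and the proposition follows; I would also remark, as in the proof of the preceding lemma, that this may be viewed as an instance of the trivial-valuation case applied to $X_w$ and $X'_w$, whose tropicalizations are the stars of $w$, combined with the compatibility $m(\sigma_w) = m(\sigma)$ for tropical multiplicities under passing to initial degenerations.
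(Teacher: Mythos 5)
Your proposal is correct and follows essentially the same route as the paper: both arguments invoke the preceding lemma to identify $c_{\overline X_w}$ and $c_{\overline X'_w}$ with the weights $m(\sigma)$, $m'(\sigma')$ on the cones $\sigma_w$, $\sigma'_w$, and then match the fan displacement rule against the local displacement rule term by term, using that $N_{\sigma_w} + N_{\sigma'_w} = N_\sigma + N_{\sigma'}$ and that $\sigma \cap (\sigma' + \epsilon v) \neq \emptyset$ for small $\epsilon > 0$ if and only if $\sigma_w \cap (\sigma'_w + v) \neq \emptyset$. Your extra care about the lattice indices and the choice of a single generic $v$ only makes explicit what the paper leaves implicit.
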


\begin{proof}
Suppose $\gamma = \tau_w$.  We claim that the displacement rules giving $(c_{\overline X_w} \cdot c_{\overline X'_w})(\gamma)$ and $i(\tau, \Trop(X) \cdot \Trop(X'))$ agree, term by term.  If $\sigma$ and $\sigma'$ are facets of $\Trop(X)$ and $\Trop(X')$, respectively, that contain $\tau$, then $\sigma$ meets $\sigma' + \epsilon v$ for small positive $\epsilon$ if and only if $\sigma_w$ meets $\sigma'_w + v$.  Then we have an equality of summands in the displacement rules,
\[
[N: N_\sigma + N_{\sigma'}] \cdot m(\sigma) \cdot m(\sigma') = [N: N_{\sigma_w} + N_{\sigma'_w}] \cdot c_{\overline X_w}(\sigma_w) \cdot c_{\overline X'_w}(\sigma'_w),
\]
because $N_\sigma + N_\sigma'$, $m(\sigma)$, and $m(\sigma')$ are equal to $ N_{\sigma_w} + N_{\sigma'_w}$, $c_{\overline X_w}(\sigma_w)$, and $c_{\overline X'_w}(\sigma'_w)$, respectively.  The Minkowski weights $c_{\overline X_w}$ and $c_{\overline X'_w}$ vanish on cones that do not come from facets of $\Trop(X)$ and $\Trop(X')$, respectively.  Therefore, all nonzero summands in the displacement rules are accounted for in the equality above, and the proposition follows.
\end{proof}

\noindent It follows that the star of $w$ in the stable tropical intersection $\Trop(X) \cdot \Trop(X')$ is exactly the union of the faces of $\Sigma$  on which $(c_{\overline X_w} \cdot c_{\overline X'_w})$ is positive, with multiplicities given by this product of Minkowski weights.

We now return to the case of the trivial valuation.  As above, we choose $\Sigma$ to be a complete unimodular fan in $N_\RR$ that contains $\Trop(X)$ and $\Trop(X')$ as subfans.

\begin{prop} \label{prop:cycles}
Suppose the valuation is trivial.  Assume $X$ meets $X'$ properly in $T$ and, moreover,  $\overline X \cap V(\sigma)$ meets $\overline X' \cap V(\sigma)$ properly in $V(\sigma)$ for each $\sigma \in \Sigma$.  

Then, for any face $\tau$ of $\Trop(X) \cap \Trop(X')$ of codimension $j + j'$ in $N_\RR$, the tropical intersection multiplicity along $\tau$ is equal to the weighted sum of multiplicities of tropicalizations
\[
i(\tau,\Trop(X)\cdot \Trop(X'))= 
\sum_Z i(Z, X \cdot X'; T) m_{Z}(\tau),
\]
where the sum is over components $Z$ of $X \cap X'$ such that $\Trop(Z)$ contains $\tau$, with tropical multiplicity $m_Z(\tau)$.
\end{prop}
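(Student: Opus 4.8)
The plan is to compute both sides of the asserted identity as values of Minkowski weights on a common smooth complete toric compactification of $T$, using the ring isomorphism $A^*(Y(\Sigma)) \xrightarrow{\sim} \Mink^*(\Sigma)$ and the dictionary of Section~\ref{sec:compatibility} between toric and stable tropical intersections. The right-hand side will come from the refined intersection cycle $\overline X \cdot \overline{X'}$ in the compactification, and the crux is to check that this refined cycle agrees, multiplicity by multiplicity, with the refined cycle $X \cdot X'$ inside $T$.

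First I would fix the fan. Since the valuation is trivial, $\Trop(X)$, $\Trop(X')$, and the finitely many fans $\Trop(Z)$ for $Z$ a component of $X \cap X'$ are all rational polyhedral fans; after subdividing, choose a complete unimodular fan $\Sigma$ in $N_\RR$ such that $\Trop(X)$, $\Trop(X')$, each $\Trop(Z)$, and the given face $\tau$ are all unions of cones of $\Sigma$. This refinement is harmless, since the tropical intersection multiplicity $i(\tau, \Trop(X) \cdot \Trop(X'))$ and the facet multiplicities $m_Z(\tau)$ are constant on relative interiors and hence unchanged by subdivision. Let $\overline X$, $\overline{X'}$, and $\overline Z$ denote the closures in $Y(\Sigma)$. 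Because $X$ meets $X'$ properly in $T$, every component $Z$ of $X \cap X'$ has codimension $j + j'$ in $T$, so $\overline Z$ has codimension $j + j'$ in $Y(\Sigma)$; and by hypothesis every component of $\overline X \cap \overline{X'}$ is such a $\overline Z$, so $\overline X$ meets $\overline{X'}$ properly in $Y(\Sigma)$.

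Next I would match the intersection multiplicities, which is the step I expect to be the main obstacle. The refined intersection cycle is defined in $Y(\Sigma)$, and $[\overline X] \cdot [\overline{X'}] = \sum_Z i(\overline Z, \overline X \cdot \overline{X'}; Y(\Sigma))\, [\overline Z]$ in $A^{j+j'}(Y(\Sigma))$. By Serre's Tor formula the multiplicity $i(\overline Z, \overline X \cdot \overline{X'}; Y(\Sigma))$ depends only on the local ring of $Y(\Sigma)$ at the generic point of $\overline Z$, together with the ideals of $\overline X$ and $\overline{X'}$ there. Since $Z$ is dense in $\overline Z$ and lies in the open torus $T \subset Y(\Sigma)$, that generic point lies in $T$, where $\overline X$, $\overline{X'}$, and $Y(\Sigma)$ restrict to $X$, $X'$, and $T$; hence $i(\overline Z, \overline X \cdot \overline{X'}; Y(\Sigma)) = i(Z, X \cdot X'; T)$ for every $Z$. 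This is exactly where the hypothesis is used: it forbids components of $\overline X \cap \overline{X'}$ in the toric boundary and ensures each $\overline Z$ meets $T$. Applying the isomorphism $A^*(Y(\Sigma)) \xrightarrow{\sim} \Mink^*(\Sigma)$, which is additive and sends $[\overline X], [\overline{X'}], [\overline Z]$ to $c_{\overline X}, c_{\overline{X'}}, c_{\overline Z}$, we obtain
\[
c_{\overline X} \cdot c_{\overline{X'}} \;=\; \sum_Z i(Z, X \cdot X'; T)\, c_{\overline Z} \qquad \text{in } \Mink^{j+j'}(\Sigma).
\]

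Finally I would evaluate this equality at the codimension $j + j'$ cone $\tau$. On the left, specializing the Proposition of Section~\ref{sec:compatibility} to the trivial valuation with $w = 0$ (so that $X_0 = X$ and $X'_0 = X'$) gives $(c_{\overline X} \cdot c_{\overline{X'}})(\tau) = i(\tau, \Trop(X) \cdot \Trop(X'))$; concretely, $c_{\overline X}$ equals $m(\sigma)$ on the facets $\sigma$ of $\Trop(X)$ and vanishes on all other codimension $j$ cones of $\Sigma$, and likewise for $\overline{X'}$, so the fan displacement rule for the product agrees term by term with the displacement rule defining the tropical intersection multiplicity. On the right, each $\Trop(Z)$ has pure codimension $j + j'$, so $\tau$ is a facet of $\Trop(Z)$ precisely when $\tau \subset \Trop(Z)$, in which case $c_{\overline Z}(\tau) = m_Z(\tau)$, and $c_{\overline Z}(\tau) = 0$ otherwise. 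Thus the evaluated right-hand side is $\sum_Z i(Z, X \cdot X'; T)\, m_Z(\tau)$, with $m_Z(\tau) = 0$ by convention when $\tau \not\subset \Trop(Z)$, which is the desired formula.
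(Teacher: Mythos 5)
Your proposal is correct and follows essentially the same route as the paper's proof: both identify $i(\tau,\Trop(X)\cdot\Trop(X'))$ with $(c_{\overline X}\cdot c_{\overline X'})(\tau)$ via the compatibility proposition, use the hypothesis to write the refined intersection cycle as $\overline X\cdot\overline X'=\sum_Z i(Z,X\cdot X';T)[\overline Z]$ with multiplicities computed at generic points lying in $T$, and conclude by evaluating against $V(\tau)$ (equivalently, evaluating the Minkowski weights at $\tau$) using $\deg([\overline Z]\cdot[V(\tau)])=m_Z(\tau)$. Your added care in refining $\Sigma$ to make each $\Trop(Z)$ a subfan is a sensible explicit step that the paper leaves implicit.
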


\begin{proof} 
First of all, by the preceding proposition,
\[
i(\tau, \Trop(X) \cdot \Trop(X')) = (c_{\overline X} \cdot c_{\overline X'})(\tau).
\]
Next, the isomorphism from $\Mink^*(\Sigma)$ to $A^*(Y(\Sigma))$ identifies $(c_{\overline X} \cdot c_{\overline X'})(\tau)$ with the intersection number $\deg([\overline X] \cdot [\overline X'] \cdot V(\tau))$.  By hypothesis, $\overline X$ meets $\overline X'$ properly in $Y(\Sigma)$, and the generic point of each component of the intersection lies in $T$, so we have the refined intersection
\[
\overline X \cdot \overline X' = \sum_Z i(Z, X \cdot X'; T) [\overline Z],
\] 
where the sum is taken over components $Z$ of $X \cap X'$.  Therefore,
\[
\deg([\overline X] \cdot [\overline X'] \cdot [V(\tau)]) = \sum_Z i(Z, X \cdot X'; T) \deg([\overline Z] \cdot [V(\tau)]).
\] 
Now, since $\Trop(X)$ and $\Trop(X')$ are subfans of $\Sigma$, the closures $\overline X$ and $\overline X'$ meet $V(\sigma)$ properly in $Y(\Sigma)$, for all $\sigma \in \Sigma$.  Furthermore, by hypothesis, $\overline X \cap V(\sigma)$ and $\overline X' \cap V(\sigma)$ meet properly in $V(\sigma)$ for all $\sigma$.  It follows that $Z$ meets all orbits of $Y(\Sigma)$ properly.  Therefore $\Trop(Z)$ is a union of cones of codimension $j + j'$ in $\Sigma$ (see Remark~\ref{rem:meeting-orbits}) and the intersection number $\deg([\overline Z] \cdot [V(\tau)])$ is equal to the tropical multiplicity $m_Z(\tau)$.
\end{proof}

The following proposition may be seen as a geometric explanation for the existence of well-defined stable tropical intersections, in the case of the trivial valuation; they are exactly the tropicalizations of intersections of generic translates in the torus.

\begin{prop} \label{prop:translation}
Suppose the valuation is trivial, and let $t$ be a general point in $T$.  Then $X$ meets $tX'$ properly and the tropicalization of the intersection scheme $X \cap tX'$ is exactly the stable tropical intersection $\Trop(X) \cdot \Trop(X')$.
\end{prop}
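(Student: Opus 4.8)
The plan is to exhibit $\Trop(X)\cdot\Trop(X')$ as the tropicalization of $X\cap tX'$ by combining a moving lemma in the torus with Proposition~\ref{prop:cycles}. Since the valuation is trivial, $\trop$ vanishes identically on $T(K)$, so $\Trop(tX')=\Trop(X')$ with the same facet multiplicities for every $t\in T$; as the stable tropical intersection depends only on the tropicalizations, $\Trop(X)\cdot\Trop(X')=\Trop(X)\cdot\Trop(tX')$. Write $j=\codim X$, $j'=\codim X'$, and $e=\dim X+\dim X'-n$. The idea is to produce, for general $t$, a toric compactification in which $X$ and $tX'$ meet properly with no components along the boundary, and then read off the stable tropical intersection from Proposition~\ref{prop:cycles}.

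The main step is a moving lemma: there is a dense open $U\subset T$ such that for $t\in U$, (i) $X$ meets $tX'$ properly in $T$, and (ii) for every complete unimodular fan $\Sigma$ refining $\Trop(X)$ and $\Trop(X')$, every component of the intersection $\overline X\cap\overline{tX'}$ of the closures in $Y(\Sigma)$ is the closure of a component of $X\cap tX'$. Indeed, $T$ acts transitively, through a quotient torus, on each orbit $O_\sigma$ of $Y(\Sigma)$, and $\overline{tX'}\cap O_\sigma$ is the translate of $\overline{X'}\cap O_\sigma$ by the image of $t$; by the elementary moving lemma for subvarieties of a torus, for general $t$ the set $\overline X\cap\overline{tX'}\cap O_\sigma$ is empty or of dimension $\dim(\overline X\cap O_\sigma)+\dim(\overline{tX'}\cap O_\sigma)-\dim O_\sigma$. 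For $\sigma\subset\Trop(X)\cap\Trop(X')$ the intersections of $\overline X$ and $\overline{tX'}$ with $O_\sigma$ are proper, of dimensions $n-\dim\sigma-j$ and $n-\dim\sigma-j'$, so this dimension is at most $n-\dim\sigma-j-j'$; for other $\sigma$ the intersection with $O_\sigma$ is already empty. Taking $\sigma=0$, subadditivity of codimension in the smooth scheme $T$ gives (i); for $\sigma\ne 0$ the bound is strictly less than $e$, and since subadditivity in the smooth variety $Y(\Sigma)$ forces every component of $\overline X\cap\overline{tX'}$ to have dimension at least $e$, (ii) follows. These finitely many conditions on $t$ are open and dense for the given $\Sigma$, and since the orbits of a refinement of $\Sigma$ fiber over orbits of $Y(\Sigma)$ with torus fibers, the same count controls the boundary intersections in any refinement; so a single dense open $U$ works for all $\Sigma$.

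Now fix $t\in U$ and choose $\Sigma$ additionally refining $\Trop(X\cap tX')$. By (i) and (ii), Proposition~\ref{prop:cycles} applies to $X$ and $tX'$: for every codimension $j+j'$ cone $\tau$ of $\Sigma$,
\[
i\bigl(\tau,\ \Trop(X)\cdot\Trop(X')\bigr)=\sum_Z i(Z,\,X\cdot tX';\,T)\,m_Z(\tau),
\]
the sum over components $Z$ of $X\cap tX'$. The left side is the multiplicity of $\tau$ in the stable tropical intersection, and the right side is its multiplicity in the tropicalization of the refined intersection cycle $X\cdot tX'=\sum_Z i(Z,X\cdot tX';T)\,Z$; since both are weighted subcomplexes of $\Sigma$ of pure codimension $j+j'$ agreeing on all such cones, $\Trop(X\cdot tX')=\Trop(X)\cdot\Trop(X')$. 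It remains to see that for $t\in U$ this refined cycle is the cycle of the scheme $X\cap tX'$. The generic point of each component $Z$ avoids the non-Cohen--Macaulay locus of $X$, a closed set of dimension less than $\dim X$, so its intersection with the general translate $tX'$ has dimension less than $e$; hence $\cO_{X,Z}$ is Cohen--Macaulay, the higher Tor sheaves of $\cO_X$ and $\cO_{tX'}$ vanish at $Z$, and $i(Z,X\cdot tX';T)=\length\,\cO_{X\cap tX',Z}$. (In characteristic zero this also follows at once from Kleiman's transversality theorem.) Thus $\Trop(X\cap tX')=\Trop(X)\cdot\Trop(X')$. The one delicate point is the moving lemma, above all part (ii): that the chosen toric compactification introduces no extraneous components along the boundary, uniformly in the compactification. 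The rest is bookkeeping on top of Proposition~\ref{prop:cycles} together with a standard Tor-vanishing statement from local algebra.
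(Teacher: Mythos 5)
Your argument follows essentially the same route as the paper's proof: translate by a general $t$ so that $X$ meets $tX'$ properly in $T$, use a dimension count on the torus orbits of $Y(\Sigma)$ (each orbit being a homogeneous space for a quotient torus, and the closures meeting orbits properly because the tropicalizations are subfans of $\Sigma$) to conclude that $\overline X\cap\overline{tX'}$ has no components along the boundary, feed this into Proposition~\ref{prop:cycles}, and finally identify the Serre multiplicities $i(Z,X\cdot tX';T)$ with the lengths $\length(\cO_{X\cap tX',Z})$. The extra care you take about choosing $\Sigma$ compatibly with $\Trop(X\cap tX')$ is harmless bookkeeping. The one point of genuine divergence is the last step: the paper invokes the Miller--Speyer transversality theorem, which gives vanishing of the higher $\Tor$ sheaves for a generic translate in a torus in any characteristic, whereas you substitute a generic Cohen--Macaulayness argument.

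That substitution, as written, has a gap. Cohen--Macaulayness of $\cO_{X,Z}$ alone does not imply that $\Tor^i_{\cO_{T,Z}}(\cO_X,\cO_{tX'})$ vanishes for $i>0$: for a proper intersection over the regular local ring $\cO_{T,Z}$ one needs \emph{both} $\cO_{X,Z}$ and $\cO_{tX',Z}$ to be Cohen--Macaulay (cf.\ \cite[Proposition~8.2]{IT}). With only one factor Cohen--Macaulay the conclusion can fail: in $\AA^4$ take the plane $V(x-z,y-w)$ and the union of the two transverse planes $V(x,y)\cup V(z,w)$; they meet properly at the origin, the first is smooth there, but the scheme-theoretic intersection has length $3$ while the intersection number is $2$, so $\Tor_1\neq 0$. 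The repair is immediate and in the spirit of your argument: the same general-position reasoning you apply to the non-Cohen--Macaulay locus of $X$ applies verbatim to that of $X'$, so for general $t$ the generic point of every component $Z$ of $X\cap tX'$ avoids both loci, both local rings are Cohen--Macaulay, and the standard Tor-vanishing criterion gives $i(Z,X\cdot tX';T)=\length(\cO_{X\cap tX',Z})$. With that one-line addition your proof is complete; it trades the characteristic-free Miller--Speyer input for an elementary generic-CM argument at the cost of this extra check on the second factor.
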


\begin{proof}
First, an easy Bertini argument shows that $X$ meets $tX'$ properly when $t$ is general.  Furthermore, since $Y(\Sigma)$ is smooth, subadditivity of codimension says that every component of $\overline X \cap t\overline X'$ has codimension at most $j + j'$.  Say $Z$ is a component of this intersection, and let $O_\sigma$ be the torus orbit in $Y(\Sigma)$ that contains the generic point of $Z$.  Now, $\overline X$ and $t \overline X'$ meet $O_\sigma$ properly, since their tropicalizations are subfans of $\Sigma$, and $\overline X \cap O_\sigma$ meets $t \overline X' \cap O_\sigma$ properly in $O_\sigma$, since $t$ is general.  Therefore $Z$ has codimension $j + j' + \codim O_\sigma$, and hence $O_\sigma$ must be the dense torus $T$.

Next, the Cohen-Macaulay locus in an excellent scheme is open \cite[Scholium 7.8.3(iv)]{EGA4.2}, and contains every generic point, so the Cohen-Macaulay loci of $X$ and $X'$ are open and dense. Therefore, since $t$ is general, the generic point of every component of $X \cap tX'$ must lie in the Cohen-Macaulay loci of both $X$ and $X'$.  Then, by  \cite[Example~8.2.7]{IT}, the intersection multiplicity $i(Z, X \cdot tX'; T)$ is equal to the length of the intersection of $X$ and $tX'$ along the generic point of $Z$.
Proposition~\ref{prop:cycles} then says that 
\[
i(\tau, \Trop(X) \cdot \Trop(X')) = \sum_Z \length( \cO_{Z, X \cap tX'}) \, m_Z(\tau).
\]
Now the right hand side is the multiplicity of $\tau$ in the tropicalization of the cycle $[X \cap tX']$, and the proposition follows, by Lemma~\ref{lem:linearity}.
\end{proof}

\begin{rem}
We apply this proposition in the proof of Lemma~\ref{lem:support}.  All that is needed for that application is the standard fact that the intersection multiplicity at a point of proper intersection is strictly positive.  In particular, the equality of the intersection multiplicity $i(Z, X \cdot tX'; T)$ with the length of the scheme-theoretic intersection of $X$ and $X'$ along $Z$ is not needed for the main lifting theorems.
\end{rem}

\noindent   In Section~\ref{sec:main}, we extend Proposition~\ref{prop:translation} to the general case, where the valuation may be nontrivial, as an application of tropical lifting theorems.  See Theorem~\ref{thm:translation}.

\section{The trivial valuation case} \label{sec:trivial val}

The first step in our approach to tropical lifting theorems is to understand how the intersection of $\Trop(X)$ and $\Trop(X')$ near a point $w$ relates to the intersection of the initial degenerations $X_w$ and $X'_w$.  If $w$ is in $N_G$ then the stars of $w$ in $\Trop(X)$ and $\Trop(X')$ are the tropicalizations of $X_w$ and $X'_w$, respectively, with respect to the trivial valuation, so this amounts to understanding how tropicalization relates to intersections when the tropicalizations meet properly and the valuation is trivial.  Our proof that tropicalization commutes with intersection in this case uses both the theory of extended tropicalizations as projections of nonarchimedean analytifications and a new result about the support of stable intersections of tropicalizations (Lemma~\ref{lem:support}).  We prove two necessary lemmas about the topology of extended tropicalizations in the general case, where the valuation is not necessarily trivial, since the arguments are identical to those in the case where the valuation is trivial.

\subsection{Topology of extended tropicalizations}

 The extended tropicalizations of $Y(\Sigma)$ and its closed subschemes were introduced by Kajiwara \cite{Kajiwara08}, and their basic properties were studied further in Section~3 of \cite{analytification}, to which we refer the reader for further details. We recall the definition and prove that the extended tropicalization of a closed subscheme $X$ in which the torus $T$ is dense is the closure of the ordinary tropicalization of $X \cap T$.  We also characterize the closure in the extended tropicalization $\bTrop(Y(\Sigma))$ of a cone $\sigma$ in $\Sigma$.

Let $\Sigma$ be a fan in $N_\RR$, and let $Y(\Sigma)$ be the associated toric variety.
Recall that each cone $\sigma$ in $\Sigma$ corresponds to an affine open subvariety $U_\sigma$ of $Y(\Sigma)$ whose coordinate ring is the semigroup ring $K[\sigma^\vee \cap M]$ associated to the semigroup of lattice points in the dual cone $\sigma^\vee$ in $M_\RR$.  Let $\bRR$ be the real line extended in the positive direction
\[
\bRR = \RR \cup \{+ \infty\},
\]
which is a semigroup under addition, with identity zero.  Then
\[
\bTrop(U_\sigma) = \Hom(\sigma^\vee \cap M, \bRR)
\]
is the space of all semigroup homomorphisms taking zero to zero, with its natural topology as a subspace of $\bRR^{\sigma^\vee \cap M}$.  For each face $\tau$ of $\Sigma$, let $N(\tau)$ be the real vector space
\[
N(\tau) = N_\RR / \Span(\tau).
\]
Then $\Trop(U_\sigma)$ is naturally a disjoint union of the real vector spaces $N(\tau)$, for $\tau \preceq \sigma$, where $N(\tau)$ is identified with the subset of semigroup homomorphisms that are finite exactly on the intersection of $\tau^\perp$ with $\sigma^\vee \cap M$.

The tropicalization $\bTrop(Y(\Sigma))$ is constructed by gluing the spaces $\bTrop(U_\sigma)$ for $\sigma \in \Sigma$ along the natural open inclusions $\bTrop(U_\tau) \subset \bTrop(U_\sigma)$ for $\tau \preceq \sigma$.  It is a disjoint union
\[
\bTrop(Y(\Sigma)) = \bigsqcup_{\sigma \in \Sigma} N(\sigma),
\]
just as $Y(\Sigma)$ is the disjoint union of the torus orbits $O_\sigma$.  Points in $N(\sigma)$ may be seen as weight vectors on monomials in the coordinate ring of $O_\sigma$, and the tropicalization of a closed subscheme $X$ in $Y(\Sigma)$ is defined to be
\[
\bTrop(X) = \bigsqcup_{\sigma \in \Sigma} \{ w \in N(\sigma) \ | \ (X \cap O_\sigma)_w \mbox{ is not empty}\}.
\]
In other words, $\bTrop(X)$ is the disjoint union of the tropicalizations of its intersections with the $T$-orbits in $Y(\Sigma)$. This space is compact when
$\Sigma$ is complete.

\begin{lem}\label{lem:extended-closure} 
If every component of $X$ meets the dense torus $T$, then $\bTrop(X)$ is the closure in $\bTrop(Y(\Sigma))$ of the ordinary tropicalization $\Trop(X \cap T)$.
\end{lem}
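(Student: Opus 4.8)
The plan is to prove the two inclusions $\Trop(X \cap T) \subseteq \bTrop(X)$ and $\bTrop(X) \subseteq \overline{\Trop(X \cap T)}$, and then to upgrade the first inclusion to the statement that $\bTrop(X)$ is exactly the \emph{closure} of $\Trop(X \cap T)$ by showing that $\Trop(X \cap T)$ is dense in $\bTrop(X)$. The first inclusion is immediate: a point $w \in N_\RR = N(0)$ lies in $\Trop(X \cap T)$ precisely when $(X \cap T)_w$ is nonempty, which is the $\sigma = 0$ summand in the definition of $\bTrop(X)$. Since $\bTrop(Y(\Sigma))$ is compact when $\Sigma$ is complete (and in any case $\bTrop(X)$ is closed, being the image of the analytification under the continuous, proper-when-$\Sigma$-complete tropicalization map, cf.\ Section~3 of \cite{analytification}), the closure of $\Trop(X \cap T)$ is contained in $\bTrop(X)$.

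The substantive content is the reverse inclusion together with density. Here I would argue stratum by stratum: fix a cone $\sigma \in \Sigma$ and a point $w \in N(\sigma)$ with $(X \cap O_\sigma)_w$ nonempty; the goal is to produce points of $\Trop(X \cap T)$ in $N(0)$ approaching $w$ inside $\bTrop(Y(\Sigma))$. Because $X$ meets the dense torus $T$ and is irreducible, $X \cap T$ is dense in $X$, so $X$ is the closure of $X \cap T$ in $Y(\Sigma)$; in particular $X \cap O_\sigma$ lies in the closure of $X \cap T$. The clean way to run the approximation is via the analytic description: after extending to a complete field $L$ (which does not change tropicalizations, by \cite[Proposition~6.1]{analytification}), $\bTrop(X_L)$ is the image of the Berkovich analytification $X_L^{\an}$ under the continuous map $\bTrop$, and $X_L^{\an} \cap T_L^{\an}$ is dense in $X_L^{\an}$ because $X \cap T$ is a dense open subscheme of the irreducible scheme $X$ (the analytification functor sends dense opens to dense opens). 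Applying the continuous surjection $\bTrop$ and using that continuous images of dense sets are dense in the image, we get that $\Trop(X_L \cap T_L) = \Trop(X \cap T)$ is dense in $\bTrop(X_L) = \bTrop(X)$. Combined with $\bTrop(X)$ being closed, this yields $\bTrop(X) = \overline{\Trop(X \cap T)}$.

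I expect the main obstacle to be the topological input that $X^{\an} \cap T^{\an}$ is dense in $X^{\an}$ and that $\bTrop$ is continuous and closed (or that its domain is compact when $\Sigma$ is complete). These are exactly the facts developed in Section~3 of \cite{analytification}, so in the writeup I would cite them rather than reprove them; the only care needed is the reduction to a complete base field so that the analytification picture is available, and noting that all tropicalizations in sight are unchanged under that base change. A purely combinatorial alternative for the reverse inclusion — taking an explicit one-parameter family of weight vectors $w_s = w + s\cdot u$ for $u$ in the relative interior of $\sigma$ and checking that $(X \cap T)_{w_s}$ is nonempty for $s \gg 0$ because the initial degeneration of $X \cap O_\sigma$ is a ``further degeneration'' of $X \cap T$ — also works and avoids analytic geometry, but it requires knowing that $(X \cap O_\sigma)_w$ being nonempty forces $(X\cap T)_{w_s}$ to be nonempty, which is most transparently the statement that $\bTrop$ commutes with the stratification; I would present the analytic argument as primary and remark on the combinatorial one.
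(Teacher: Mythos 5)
Your proposal is correct and follows essentially the same route as the paper: reduce to a complete base field using invariance of tropicalization under extensions of valued fields, observe that $(X \cap T)^{\an}$ is dense in $X^{\an}$ and maps onto $\Trop(X \cap T)$ under the proper continuous extended tropicalization map, and conclude using that $\bTrop(X)$ is closed. The combinatorial alternative you sketch is not needed and is not what the paper does; the analytic argument you present as primary is the paper's proof.
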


\begin{proof}
Since tropicalizations are invariant under extensions of valued fields \cite[Proposition~6.1]{analytification}, and the completion of the algebraically closed field $K$ is algebraically closed \cite[Proposition~3.4.1.3]{BGR84}, we may assume the field $K$ is complete with respect to its valuation.
 
The tropicalization map from $X(K)$ to $\bTrop(Y(\Sigma))$ extends to a proper continuous map on the nonarchimedean analytification $X^\an$ whose image is $\bTrop(X)$ \cite[Section~2]{analytification}.  Since the open subset $(X \cap T)^\an$ is dense in $X^\an$ \cite[Corollary~3.4.5]{Berkovich90} and maps onto $\Trop(X \cap T)$, the extended tropicalization $\bTrop(X)$ is contained in the closure of $\Trop(X \cap T)$.  The lemma follows, since $\bTrop(X)$ is closed.
\end{proof}

If $\sigma$ is a face of $\Sigma$ that contains another face $\tau$, then we write $\sigma_\tau$ for the image of $\sigma$ in $N(\tau)$.  It is a cone of dimension $\dim(\sigma) - \dim(\tau)$.  We will use the following lemma in the proof of Proposition~\ref{prop:proper at boundary}.

\begin{lem} \label{lem:boundary}
Let $\sigma$ and $\tau$ be faces of $\Sigma$, and let $\overline \sigma$ be the closure of $\sigma$ in $\bTrop(Y(\Sigma))$.  Then
\[
\overline \sigma \cap N(\tau) = \sigma_\tau
\]
if $\sigma$ contains $\tau$, and $\overline \sigma$ is disjoint from $N(\tau)$ otherwise.
\end{lem}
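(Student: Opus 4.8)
The plan is to compute the closure $\overline\sigma$ one affine chart at a time, using the explicit description of $\bTrop(Y(\Sigma))$ as $\bigsqcup_\rho N(\rho)$ together with the coordinates on $\bTrop(U_\rho) = \Hom(\rho^\vee\cap M,\bRR)$. The key observation is that a cone $\sigma\subseteq N_\RR = N(0)$ sits inside $\bTrop(Y(\Sigma))$ via the open stratum, and that approaching the boundary stratum $N(\tau)$ means letting the pairing $\langle u,\cdot\rangle$ with certain characters $u$ tend to $+\infty$; which characters these are is controlled precisely by $\tau$.

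First I would reduce to the case where $\tau\preceq\sigma$, or more precisely handle the two claims separately. For the disjointness claim: if $\overline\sigma$ meets $N(\tau)$, then already $\overline\sigma$ meets $U_\tau$'s tropicalization $\bTrop(U_\tau)$, which is open in $\bTrop(Y(\Sigma))$, so $\sigma$ itself must meet $\bTrop(U_\tau) = \bigsqcup_{\rho\preceq\tau} N(\rho)$; but $\sigma\subseteq N(0)=N_\RR$ meets this union only in $N(0)$, and $N(0)\subseteq\bTrop(U_\tau)$ precisely corresponds to weight vectors finite on all of $\tau^\vee\cap M$, hence on $\sigma^\vee\cap M$ when—wait, I need the stratum $N(0)$ to lie in $\bTrop(U_\tau)$ at all, which happens iff $\tau^\vee$ spans $M_\RR$, i.e.\ always, since $\tau^\vee$ always has full dimension. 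The correct statement to extract is: the closure of $\sigma$ is contained in $\bTrop(U_\sigma)$ union the closures in the charts $U_\rho$ for $\rho\succeq$ (faces of $\sigma$). Cleaner: work in the single chart $U_\sigma$ (if $\tau\preceq\sigma$) or observe $\overline\sigma\subseteq \bTrop(U_\sigma)$ because the complement $\bTrop(Y(\Sigma))\setminus\bTrop(U_\sigma)$ is a union of strata $N(\rho)$ with $\rho\not\preceq\sigma$, and I claim $\sigma$ stays away from all of these, so $\overline\sigma$ does too. Granting $\overline\sigma\subseteq\bTrop(U_\sigma) = \bigsqcup_{\rho\preceq\sigma} N(\rho)$ gives the disjointness half immediately: if $\tau\not\preceq\sigma$ then $N(\tau)\cap\bTrop(U_\sigma)=\emptyset$.

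Next, assuming $\tau\preceq\sigma$, I would identify $\overline\sigma\cap N(\tau)$ by a direct limit computation in the coordinates of $\bTrop(U_\sigma)=\Hom(\sigma^\vee\cap M,\bRR)$. Here $N(\tau)$ is the locus of semigroup homomorphisms finite exactly on $\tau^\perp\cap\sigma^\vee\cap M$ and equal to $+\infty$ elsewhere. Given a point $\bar v\in\sigma_\tau$, lift it to $v\in\sigma$ and consider $v + s w_\tau$ where $w_\tau$ is a vector in the relative interior of $\tau$: as $s\to+\infty$, the pairing $\langle u, v+sw_\tau\rangle$ stays bounded for $u\in\tau^\perp$ and goes to $+\infty$ for $u\in\sigma^\vee$ with $\langle u,w_\tau\rangle>0$, i.e.\ for $u\notin\tau^\perp$ (using $\tau\preceq\sigma$ so that $\tau^\perp\cap\sigma^\vee = \tau^\vee\cap\ldots$ gives the face structure); the limit is exactly the point of $N(\tau)$ corresponding to $\bar v$, so $\sigma_\tau\subseteq\overline\sigma\cap N(\tau)$. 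Conversely, any convergent net in $\sigma$ whose limit lies in $N(\tau)$ must, in the $\bar v\in N(\tau)$ coordinate (pairing against $\tau^\perp\cap M$), converge to the image of its $N(0)$-coordinates under $N_\RR\to N(\tau)$; since $\sigma$ is closed in $N_\RR$, hence $\sigma_\tau=\overline{\sigma_\tau}$ closed in $N(\tau)$ (as the continuous image of a closed cone under a linear surjection—this needs that $\sigma_\tau$ is closed, which holds because $\sigma$ contains $\tau$ so $\sigma_\tau$ is a genuine cone, not just a convex set whose closure could be larger), the limit lies in $\sigma_\tau$.

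The main obstacle I anticipate is the bookkeeping around which semigroup homomorphisms in $\bTrop(U_\sigma)$ are "finite exactly on $\tau^\perp\cap\sigma^\vee\cap M$" and verifying that the explicit degeneration $v+sw_\tau$ lands in the correct stratum with the correct coordinates—this is where the hypothesis $\tau\preceq\sigma$ is used essentially (so that $\tau^\perp\cap\sigma^\vee$ is a face of $\sigma^\vee$ of the right dimension and $\sigma/\tau$ makes sense as a cone). The containment $\overline\sigma\subseteq\bTrop(U_\sigma)$, needed for disjointness, should follow formally from the fact that $\bTrop(U_\sigma)$ is open and contains $\sigma$ together with the description of its complement as a union of strata $N(\rho)$, $\rho\not\preceq\sigma$, none of which meets the open stratum $N(0)\supseteq\sigma$ in its closure within a chart not containing $\rho$—but one should double-check this does not secretly require completeness of $\Sigma$; it does not, since everything here is local on the chart $U_\sigma$ and its faces.
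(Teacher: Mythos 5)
Your treatment of the case $\tau \preceq \sigma$ is correct and is essentially the paper's argument: the inclusion $\sigma_\tau \subseteq \overline\sigma$ via the explicit path $v + s w_\tau$ with $w_\tau$ in the relative interior of $\tau$ (so that $\langle u, v+sw_\tau\rangle$ is constant for $u \in \tau^\perp$ and tends to $+\infty$ for $u \in \tau^\vee \cap M$ not in $\tau^\perp$), and the reverse inclusion via the continuous retraction onto $N(\tau)$ together with the fact that $\sigma_\tau$, being the image of a polyhedral cone under a linear map, is closed. (Minor point: closedness of the image does not require $\tau \preceq \sigma$; the image of any polyhedral cone under a linear map is again a polyhedral cone.)

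The disjointness half, however, has a genuine gap. You reduce it to the containment $\overline\sigma \subseteq \bTrop(U_\sigma)$, but that containment \emph{is} the disjointness statement (for all $\tau \not\preceq \sigma$ at once), and the justification you offer is not valid: $\sigma$ being disjoint from the closed set $\bTrop(Y(\Sigma)) \smallsetminus \bTrop(U_\sigma)$ does not imply that $\overline\sigma$ is disjoint from it --- a set can be disjoint from a closed set while its closure is not (e.g.\ $(0,1)$ and $\{0\}$ in $\RR$), and openness of $\bTrop(U_\sigma)$ says nothing about whether it contains $\overline\sigma$. Note also that the retraction argument alone only yields $\overline\sigma \cap N(\tau) \subseteq \pi(\sigma)$, where $\pi(\sigma)$ is the (nonempty) image of $\sigma$ in $N(\tau)$, so it cannot give disjointness either. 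What is actually needed is an argument using the fan structure: since $\sigma$ and $\tau$ are cones of $\Sigma$ with $\tau \not\preceq \sigma$, the common face $\sigma \cap \tau$ is a proper face of $\tau$, so the separation lemma for fans produces $u \in \tau^\vee \cap M$ with $u^\perp \cap \tau = \sigma \cap \tau$ (hence $u \notin \tau^\perp$) and $\langle u, \cdot\rangle \leq 0$ on $\sigma$. Any net in $\sigma$ converging to a point of $N(\tau)$ would have to satisfy $\langle u, w_j\rangle \to +\infty$ for this $u$, which is impossible since $\langle u, w_j\rangle \leq 0$. Without some such use of the hypothesis $\tau \not\preceq \sigma$, the disjointness assertion is unproved.
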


\begin{proof}
Let $v$ be a point in $N(\tau)$, and let $\pi: \bTrop(U_\tau) \rightarrow N(\tau)$ be the continuous map that restricts to the canonical linear projections from $N(\gamma)$ onto $N(\tau)$ for $\gamma \preceq \tau$.  If $v$ is not in $\sigma_\tau$, then the preimage under $\pi$ of $N(\tau) \smallsetminus \sigma_\tau$ is an open neighborhood of $v$ that is disjoint from $\sigma$, so $v$ is not in $\overline \sigma$.  Similarly, if $\sigma$ does not contain $\tau$ then $\pi^{-1}(N(\tau) )\smallsetminus \sigma$ is an open neighborhood of $v$ that is disjoint from $\sigma$.

It remains to show that $\overline{\sigma} \cap N(\tau)$ contains $\sigma_\tau$ in the case where $\sigma$ contains $\tau$.  Suppose $v$ is in $\sigma_\tau$ and let $w$ be a point in $\sigma$ that projects to $v$.  Let $w'$ be a point in the relative interior of $\tau$.  Then $w + \RR_{\geq 0} w'$ is a path in $\sigma$ whose limit is $v$, so $v$ is in $\overline \sigma$.
\end{proof}

\subsection{Tropical subadditivity}

Although not strictly necessary for the main results of the paper, it is helpful to know that codimension is subadditive for intersections of tropicalizations in $N_\RR$, so $\Trop(X)$ and $\Trop(X')$ never intersect in less than the expected dimension.  The proof is by a diagonal projection argument, similar to the proof of Lemma~\ref{lem:support}, below.

\begin{prop} \label{prop:tropical subadditivity}
Let $X$ and $X'$ be closed subschemes of pure codimension $j$ and $j'$, respectively, in $T$.  If $\Trop(X) \cap \Trop(X')$ is nonempty then it has codimension at most $j + j'$ at every point.
\end{prop}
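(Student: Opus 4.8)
The plan is to reduce the statement about $\Trop(X) \cap \Trop(X')$ to a statement about the tropicalization of a single variety, namely the intersection of $X \times X'$ with the diagonal in $T \times T$. Concretely, I would consider the product $X \times X' \subseteq T \times T$, which has codimension $j + j'$, and the diagonal subtorus $\Delta \cong T$ inside $T \times T$. Then $(X \times X') \cap \Delta$ is naturally identified with $X \cap X'$ sitting in $T$, and more to the point, after choosing coordinates so that $T \times T$ has character lattice $M \oplus M$, the tropicalization $\Trop(X \times X') = \Trop(X) \times \Trop(X')$ in $N_\RR \oplus N_\RR$, and $\Trop(\Delta)$ is the diagonal $N_\RR \hookrightarrow N_\RR \oplus N_\RR$. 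So $\Trop(X) \cap \Trop(X')$, viewed inside the diagonal copy of $N_\RR$, is exactly $\Trop(X \times X') \cap \Trop(\Delta)$.

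The key step is then to reduce intersection with the diagonal to intersection with a coordinate subspace by applying an automorphism of $T \times T$. The linear automorphism $(u, v) \mapsto (u, v - u)$ of the lattice $M \oplus M$ (or its transpose on $N$) carries $\Delta$ to the subtorus $T \times \{1\}$, equivalently carries the diagonal of $N_\RR \oplus N_\RR$ to the subspace $N_\RR \times \{0\}$. Since tropicalization is equivariant with respect to monomial changes of coordinates (it is just a linear map on $N_\RR$ applied to $\Trop$), after this change of coordinates we are reduced to showing: if $W$ is a subvariety of $T \times T$ of codimension $c = j + j'$, then $\Trop(W) \cap (N_\RR \times \{0\})$, if nonempty, has codimension at most $c$ inside $N_\RR \times \{0\}$. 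But intersecting $\Trop(W) \subseteq N_\RR \oplus N_\RR$ (a polyhedral complex of pure dimension $\dim W = 2n - c$) with the $n$-dimensional coordinate subspace $N_\RR \times \{0\}$ can only drop the dimension by at most $n$ at any point, so the intersection has dimension at least $(2n - c) - n = n - c$ wherever it is nonempty; that is, codimension at most $c$ in $N_\RR \times \{0\}$. This uses only the elementary fact that two linear (or polyhedral) subspaces $A, B$ of $\RR^{2n}$ satisfy $\dim(A \cap B) \geq \dim A + \dim B - 2n$ — here with $B$ the coordinate subspace — applied facet-by-facet to the polyhedral complex $\Trop(W)$.

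I would write it roughly as follows.

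\begin{proof}
Consider the product $X \times X'$, a subvariety of the torus $T \times T$ of codimension $j + j'$, and let $\Delta \cong T$ be the diagonal subtorus.  Under the identification of $X \cap X'$ with $(X \times X') \cap \Delta$, and the corresponding identification of $N_\RR$ with the diagonal in $N_\RR \oplus N_\RR$, the intersection $\Trop(X) \cap \Trop(X')$ is identified with $\Trop(X \times X') \cap \Trop(\Delta)$, because $\Trop(X \times X') = \Trop(X) \times \Trop(X')$ and $\Trop(\Delta)$ is the diagonal in $N_\RR \oplus N_\RR$.  Applying the automorphism of $T \times T$ given on the character lattice $M \oplus M$ by $(u,v) \mapsto (u, v-u)$, we may replace $\Delta$ by the coordinate subtorus $T \times \{1\}$ and $\Trop(\Delta)$ by the coordinate subspace $L = N_\RR \times \{0\}$; here we use that tropicalization transforms by the corresponding linear automorphism of $N_\RR \oplus N_\RR$.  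Thus it suffices to show: if $W \subseteq T \times T$ is a subvariety of codimension $c := j + j'$ and $\Trop(W) \cap L$ is nonempty, then it has codimension at most $c$ in $L$.  Now $\Trop(W)$ is the underlying set of a polyhedral complex of pure dimension $\dim W = 2n - c$ in $N_\RR \oplus N_\RR$.  For any polyhedron $P$ of dimension $2n - c$ in $N_\RR \oplus N_\RR$ with $P \cap L$ nonempty, the affine span of $P \cap L$ has dimension at least $(2n-c) + n - 2n = n - c$, so $P \cap L$ has codimension at most $c$ in $L$.  Applying this to each facet of the complex meeting $L$ shows that $\Trop(W) \cap L$ has codimension at most $c$ at every one of its points, as desired.
\end{proof}

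The main obstacle — or rather the main point to get right — is the bookkeeping of the linear-algebra identifications: verifying carefully that $\Trop$ of a product is the product of the $\Trop$'s, that $\Trop$ is equivariant under monomial automorphisms, and that the diagonal goes to a coordinate subspace after the stated change of variables. The geometric heart, the dimension count for intersecting a polyhedral complex with a linear subspace, is elementary once the setup is in place; no deep input (not even the structure theorem giving purity of $\Trop(X)$, beyond knowing its dimension) is required.
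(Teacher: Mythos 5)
Your reduction to the diagonal is fine as far as it goes, but the final step --- the ``elementary fact'' that a polyhedron $P$ of dimension $2n-c$ meeting the coordinate subspace $L = N_\RR \times \{0\}$ must meet it in dimension at least $n-c$ --- is false, and this is exactly where the real content of the proposition lives. The inequality $\dim(A\cap B) \geq \dim A + \dim B - 2n$ holds for affine subspaces that share a point, but not for polyhedra: $L$ can meet a facet $P$ only along a proper face of $P$. For instance, in $\RR^3$ the $2$-dimensional cone spanned by $(1,0,1)$ and $(0,1,1)$ meets the plane $z=0$ only at the origin, even though $2+2-3=1$; the same can happen to an individual facet of a tropicalization. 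So the facet-by-facet dimension count does not work, and indeed subadditivity fails for arbitrary pure-dimensional polyhedral complexes --- it holds for tropicalizations only because of the balancing condition or, equivalently, their algebraic origin. Neither ingredient appears in your argument.

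The paper supplies the missing input algebraically. It first reduces to the trivial valuation by replacing $X$ and $X'$ with their initial degenerations at the given point $w$, so that the local statement at $w$ becomes a global one about the stars. Then, rather than intersecting $\Trop(X\times X')$ with the tropicalization of the diagonal, it projects: with $T' = (T\times T)/\Delta$ and $\pi: X\times X' \to T'$, every nonempty fiber of $\pi$ has dimension at least $\dim X + \dim X' - \dim T'$ by the fiber dimension theorem, and, because the valuation is trivial, the tropicalization of such a fiber over a point tropicalizing to $0$ is contained in $p^{-1}(0) = \Trop(X)\cap\Trop(X')$. Since tropicalization preserves dimension, this yields the required lower bound. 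If you want to keep your diagonal-intersection formulation, you must replace the polyhedral dimension count by an argument using the balancing condition (for example via the fan displacement rule and stable intersections), which is genuinely more than linear algebra.
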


\begin{proof}
Let $w$ be a point in $\Trop(X) \cap \Trop(X')$.  Then, in a neighborhood of $w$, the intersection can be identified with a neighborhood of zero in $\Trop(X_w) \cap \Trop(X'_w)$.  Therefore, replacing $X$ and $X'$ by their initial degenerations at $w$, we may assume the valuation is trivial, and it will suffice to show that the global codimension of $\Trop(X) \cap \Trop(X)$ is at most $j + j'$.

Suppose the valuation is trivial.  Let $T'$ be the quotient of $T \times T$ by the diagonal subtorus, with $\pi: X \times X' \rightarrow T'$ the induced projection, and $p: \Trop(X) \times \Trop(X') \rightarrow N'_\RR$ the tropicalization of $\pi$.  Since $\Trop(X)$ meets $\Trop(X')$, the point zero is in the image of $p$, which is exactly the tropicalization of the closure of the image of $\pi$.  

Let $y$ be a point in the image of $\pi$.  The fiber $\pi^{-1}(y)$ has dimension at least $\dim X + \dim X' - \dim T'$ and, since the valuation is trivial, its tropicalization is contained in $p^{-1}(0)$.  Since $p^{-1}(0)$ is naturally identified with $\Trop(X) \cap \Trop(X')$, it follows that the tropical intersection has dimension at least $\dim X' + \dim X - \dim T$, as required.
\end{proof}

\subsection{Lower bounds on multiplicities}

For the remainder of this section, we assume the valuation $\nu$ is trivial.  Let $X$ and $X'$ be closed subschemes of pure codimension $j$ and $j'$ in $T$, respectively.  After subdividing the tropicalizations, we choose a complete unimodular fan $\Sigma$ in $N_\RR$ such that each face of $\Trop(X)$ and each face of $\Trop(X')$ is a face of $\Sigma$.  We write $\overline X$ and $\overline X'$ for the closures of $X$ and $X'$ in the smooth complete toric variety $Y(\Sigma)$.

Our goal is to prove the following version of Theorem~\ref{basic} in the special case of the trivial valuation, with lower bounds on the multiplicities of the facets in $\Trop(X \cap X')$.  These lower bounds are extended to the general case in Section~\ref{sec:main}.

\begin{thm} \label{thm:trivial val}
Suppose $\nu$ is trivial and $\Trop(X)$ meets $\Trop(X')$ properly.  Then 
\[
\Trop(X \cap X') = \Trop(X) \cap \Trop(X'), 
\]
and the multiplicity of any facet $\tau$ is bounded below by the tropical intersection multiplicity
\[
m_{X \cap X'}(\tau) \geq i(\tau, \Trop(X) \cdot \Trop(X')).
\]
Furthermore, the tropical intersection multiplicity is equal to the weighted sum of algebraic intersection multiplicities
\[
i(\tau,\Trop(X)\cdot \Trop(X')) = \sum_{Z} i(Z,X \cdot X';Y(\Sigma)) \, m_Z(\tau),
\]
where the sum is over components $Z$ of $X \cap X'$ whose tropicalizations contain $\tau$.
\end{thm}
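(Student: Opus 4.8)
The plan is to deduce Theorem~\ref{thm:trivial val} from the compatibility results of Section~\ref{sec:compatibility}, using the toric compactification $Y(\Sigma)$ together with a Bertini-type argument to control the components of $\overline X \cap \overline X'$. First I would establish the set-theoretic equality $\Trop(X\cap X') = \Trop(X)\cap\Trop(X')$. The containment $\subseteq$ is standard (tropicalization commutes with intersection up to containment). For the reverse containment, fix a point $w$ in $\Trop(X)\cap\Trop(X')$; passing to initial degenerations at $w$ and using that $\Trop(X_w)$ and $\Trop(X'_w)$ are the stars of $w$, it suffices to show that $(X\cap X')_w$ is nonempty, equivalently that $0 \in \Trop(X_w\cap X'_w)$. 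Since the stable tropical intersection $\Trop(X)\cdot\Trop(X')$ has support contained in $\Trop(X)\cap\Trop(X')$ and satisfies the balancing condition with codimension exactly $j+j'$ by properness, its support near $w$ is a nonempty union of facets through $w$ (here I would invoke the forthcoming Lemma~\ref{lem:support} on the support of stable intersections, exactly as flagged in the text). The compatibility statements — the Proposition computing $(c_{\overline X_w}\cdot c_{\overline X'_w})(\tau)$ as the tropical intersection multiplicity — then identify that this positive multiplicity equals $\deg([\overline X_w]\cdot[\overline X'_w]\cdot[V(\tau)])$, forcing $\overline X_w\cap\overline X'_w$ to meet $O_\tau$; a dimension count as in the proof of Proposition~\ref{prop:translation} shows the corresponding component actually meets the dense torus, giving a point of $(X\cap X')_w$.

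Next I would prove the multiplicity statements. After refining $\Sigma$ so that every facet $\tau$ of $\Trop(X)\cap\Trop(X')$ is a cone of $\Sigma$, I want to apply Proposition~\ref{prop:cycles}, whose hypotheses require that $X$ meet $X'$ properly in $T$ and that every component of $\overline X\cap\overline X'$ be the closure of a component of $X\cap X'$. Properness in $T$ follows from the first part together with tropical subadditivity (Proposition~\ref{prop:tropical subadditivity}): $\Trop(X\cap X')$ has codimension exactly $j+j'$, so $X\cap X'$ has codimension $j+j'$ in $T$. For the second hypothesis, I would run the same orbit-by-orbit dimension argument as in Proposition~\ref{prop:translation}: for a component $Z$ of $\overline X\cap\overline X'$ whose generic point lies in $O_\sigma$, properness of $\overline X, \overline X'$ against $O_\sigma$ (their tropicalizations being subfans of $\Sigma$) combined with subadditivity in the smooth ambient space forces $\codim Z = j+j'+\codim O_\sigma$; but subadditivity in $Y(\Sigma)$ also bounds $\codim Z \le j+j'$, so $\codim O_\sigma = 0$ and $O_\sigma = T$. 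With both hypotheses verified, Proposition~\ref{prop:cycles} gives the stated identity
\[
i(\tau,\Trop(X)\cdot\Trop(X')) = \sum_Z i(Z, X\cdot X'; Y)\, m_Z(\tau).
\]

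Finally, the lower bound $m_{X\cap X'}(\tau) \ge i(\tau,\Trop(X)\cdot\Trop(X'))$ follows by comparing this identity with the definition of the tropicalization of a cycle: $m_{X\cap X'}(\tau) = \sum_Z m_Z(\tau)$ where the sum is over components $Z$ of $X\cap X'$ with $\Trop(Z)\ni\tau$, while the displayed formula expresses $i(\tau,\cdot)$ as the same sum but with each $m_Z(\tau)$ weighted by the intersection multiplicity $i(Z,X\cdot X';Y)$. Since intersection multiplicities at components of a proper intersection are positive integers, hence at least $1$, the inequality is immediate. Note that here the refined intersection cycle $X\cdot X'$ is taken inside the smooth variety $Y$ (we may take $Y = T$ or $Y = Y(\Sigma)$; the multiplicities agree since all relevant components meet $T$).

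I expect the main obstacle to be the nonemptiness of $(X\cap X')_w$, i.e.\ establishing that the support of the stable tropical intersection is nonempty wherever the tropicalizations meet properly — this is precisely the content deferred to Lemma~\ref{lem:support}, and it is the one point where purely combinatorial balancing does not suffice and one must use the diagonal projection argument together with the geometry of extended tropicalizations. Once Lemma~\ref{lem:support} is in hand, the passage back to an algebraic intersection point is a routine dimension count, and the multiplicity bookkeeping is formal.
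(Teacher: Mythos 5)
Your overall architecture matches the paper's: compactify in $Y(\Sigma)$, apply Proposition~\ref{prop:cycles} to obtain $i(\tau,\Trop(X)\cdot\Trop(X')) = \sum_Z i(Z,X\cdot X';T)\,m_Z(\tau)$, use Lemma~\ref{lem:support} for positivity, and compare lengths with intersection multiplicities for the inequality (your last paragraph is correct and is exactly the paper's argument). But there is a genuine gap in your verification of the hypothesis of Proposition~\ref{prop:cycles}, namely that every component of $\overline X\cap\overline X'$ is the closure of a component of $X\cap X'$. The Tevelev condition (each of $\overline X$, $\overline X'$ separately meets $O_\sigma$ properly) together with subadditivity yields only the upper bound $\codim Z\le j+j'+\codim O_\sigma$; it does not force the equality you assert, because nothing so far prevents $\overline X\cap O_\sigma$ and $\overline X'\cap O_\sigma$ from meeting each other non-properly inside $O_\sigma$. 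In Proposition~\ref{prop:translation} that lower bound is supplied by the genericity of the translate $t$; here it must instead come from the hypothesis that $\Trop(X)$ meets $\Trop(X')$ properly, transferred to each boundary orbit. That transfer is precisely Proposition~\ref{prop:proper at boundary}, which rests on Lemmas~\ref{lem:extended-closure} and \ref{lem:boundary}: $\Trop(\overline X\cap O_\tau)$ is the union of the projected faces $\sigma_\tau$ with $\sigma\supseteq\tau$ in $\Trop(X)$, and $\codim(\sigma_\tau\cap\sigma'_\tau)$ in $N(\tau)$ equals $\codim(\sigma\cap\sigma')$ in $N_\RR$. You neither invoke this nor reproduce its content, and without it one cannot exclude a component of $\overline X\cap\overline X'$ supported entirely in the boundary.

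A second problem is that your first paragraph conflates $(X\cap X')_w$ with $X_w\cap X'_w$. By definition $w\in\Trop(X\cap X')$ iff $(X\cap X')_w\neq\emptyset$, and $(X\cap X')_w\subseteq X_w\cap X'_w$, but the reverse containment is exactly what can fail; passing from a point of $X_w\cap X'_w$ to a point of $(X\cap X')_w$ is the content of Theorem~\ref{exploded main} and requires the valuation-ring machinery of Section~\ref{sec:dim}, which you do not invoke. Your argument therefore only produces a point of $X_w\cap X'_w$, which is not "equivalently" what is needed. Fortunately this paragraph is dispensable: once the displayed identity is in hand, Lemma~\ref{lem:support} gives a component $Z$ of the honest intersection $X\cap X'$ with $m_Z(\tau)>0$, i.e.\ $\tau\subseteq\Trop(Z)\subseteq\Trop(X\cap X')$, which is how the paper deduces the set-theoretic equality directly, with no detour through initial degenerations. (Note that properness of $X\cap X'$ in $T$ needs only the easy containment $\Trop(X\cap X')\subseteq\Trop(X)\cap\Trop(X')$ together with the hypothesis, so nothing is lost by dropping that paragraph.)
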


\noindent Given the compatibility of toric and stable tropical intersections in Proposition~\ref{prop:cycles}, the main step in the proof of Theorem~\ref{thm:trivial val} is showing that every component of $\overline X \cap \overline X'$ is the closure of a component of $X \cap X'$.  This step can fail spectacularly when $\Trop(X)$ and $\Trop(X')$ do not meet properly.  In such cases, $\overline X \cap \overline X'$ may contain components of larger than expected dimension, even if $X$ and $X'$ meet properly in T, and it is difficult to predict what $\Trop(X \cap X')$ will look like.

\begin{prop} \label{prop:proper at boundary}
Suppose $\Trop(X)$ meets $\Trop(X')$ properly, and let $\tau$ be a face of $\Sigma$.  Then 
\begin{enumerate}
\item The tropicalizations of $\overline X \cap O_\tau$ and $\overline X' \cap O_\tau$ meet properly in $N(\tau)$.
\item The subschemes $\overline X \cap V(\tau)$ and $\overline X' \cap V(\tau)$ meet properly in $V(\tau)$.
\item The closures $\overline X$ and $\overline X'$ meet properly in $Y(\Sigma)$, and every component of $\overline X \cap \overline X'$ is the closure of a component of $X \cap X'$.
\end{enumerate}
\end{prop}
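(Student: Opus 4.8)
The plan is to prove the three statements together by descending induction on $\dim \tau$, using the toric orbit--cone correspondence to reduce the global intersection in $Y(\Sigma)$ to intersections inside torus orbits, where the hypothesis of properness of $\Trop(X) \cap \Trop(X')$ can be applied directly. The base case is $\tau = \{0\}$, where $O_\tau = T$ and $V(\tau) = Y(\Sigma)$; here (1) is the hypothesis that $\Trop(X)$ meets $\Trop(X')$ properly, and (2) is vacuous, so only (3) needs attention at the very end.

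First I would establish (1) for an arbitrary cone $\tau \in \Sigma$. By the definition of extended tropicalization, $\bTrop(\overline X \cap O_\tau)$ is the set of $w \in N(\tau)$ with $(X \cap O_\tau)_w$ nonempty, and Lemma~\ref{lem:extended-closure} together with Lemma~\ref{lem:boundary} lets me identify this with $\bTrop(\overline X) \cap N(\tau)$, which in turn is built from those cones $\sigma' \preceq \Trop(X)$ containing $\tau$, projected into $N(\tau)$. Concretely, $\Trop(\overline X \cap O_\tau)$ is the union of the cones $\sigma'_\tau$ for $\sigma'$ a face of $\Trop(X)$ containing $\tau$, and similarly for $X'$. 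Since $\Trop(X)$ meets $\Trop(X')$ properly near every point of $\tau$, the intersection $\Trop(X)\cap\Trop(X')$ has codimension $j+j'$ there; passing to the quotient $N(\tau) = N_\RR/\Span(\tau)$ and using that $\tau$ itself lies in both tropicalizations, one checks that the images still meet in codimension $j+j'$ in $N(\tau)$. This gives (1). For (2), note that $\overline X \cap V(\tau)$ has $\Trop(\overline X \cap V(\tau))$ supported on $\bTrop(\overline X)$ restricted to the closed union $\bigsqcup_{\gamma \succeq \tau} N(\gamma)$, and since $V(\tau)$ is itself a smooth complete toric variety with fan $\Star(\tau)$, I can apply tropical subadditivity (Proposition~\ref{prop:tropical subadditivity}) inside each orbit $O_\gamma$ of $V(\tau)$ together with (1) applied to each such $\gamma$, concluding that the codimension of the intersection inside $V(\tau)$ is exactly $j + j'$ everywhere, i.e. the intersection is proper and pure of the expected dimension. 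In fact (1) and (2) are essentially equivalent formulations, since properness of $\overline X \cap V(\tau)$ in $V(\tau)$ can be checked orbit by orbit on the orbits of $V(\tau)$, which are exactly the $O_\gamma$ for $\gamma \succeq \tau$, and there it reduces to statement (1) for each $\gamma$.

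Finally, for (3): properness of $\overline X \cap \overline X'$ in $Y(\Sigma)$ is the case $\tau = \{0\}$ of (2). To see that every component $Z$ of $\overline X \cap \overline X'$ is the closure of a component of $X \cap X'$, it suffices to show that no component of $\overline X \cap \overline X'$ is contained in the boundary $Y(\Sigma) \smallsetminus T$. Suppose $Z$ is such a component, and let $O_\tau$ be the orbit containing its generic point, with $\tau \neq \{0\}$. Then $Z$ is a component of $(\overline X \cap O_\tau) \cap (\overline X' \cap O_\tau)$ inside $O_\tau$, hence has codimension at least $j + j'$ in $O_\tau$ by (1); thus $Z$ has codimension at least $j + j' + \dim\tau$ in $Y(\Sigma)$. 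But subadditivity of codimension in the smooth variety $Y(\Sigma)$ forces every component of $\overline X \cap \overline X'$ to have codimension \emph{at most} $j + j'$, a contradiction since $\dim \tau \geq 1$. Therefore every component of $\overline X \cap \overline X'$ meets $T$, and intersecting with $T$ identifies it with a component of $X \cap X'$ whose closure it is.

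The main obstacle I expect is the bookkeeping in step (1): carefully verifying that the set-theoretic description of $\Trop(\overline X \cap O_\tau)$ via Lemmas~\ref{lem:extended-closure} and \ref{lem:boundary} is correct, and that properness of the intersection is preserved under the projection $N_\RR \to N(\tau)$. One must use that $\tau$ lies in $\Trop(X)\cap\Trop(X')$ (otherwise $\overline X \cap O_\tau$ or $\overline X'\cap O_\tau$ is empty and there is nothing to prove), and that properness is a local condition that interacts well with quotienting by the subspace $\Span(\tau)$ spanned by a common face. Once (1) is in hand, statements (2) and (3) follow by the orbit-by-orbit and subadditivity arguments sketched above with only routine verifications.
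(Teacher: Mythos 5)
Your proposal is correct and follows essentially the same route as the paper: identify $\Trop(\overline X \cap O_\tau)$ with the union of projected faces $\sigma_\tau$ via Lemmas~\ref{lem:extended-closure} and \ref{lem:boundary}, check that proper intersection is preserved under the quotient $N_\RR \to N(\tau)$ because $(\sigma\cap\sigma')_\tau = \sigma_\tau\cap\sigma'_\tau$, then deduce (2) orbit by orbit and (3) by the codimension count against subadditivity in the smooth variety $Y(\Sigma)$. The "descending induction" framing is unnecessary (the argument for (1) is direct for each $\tau$), but this is only cosmetic.
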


\begin{proof}
First, if the tropicalizations of $\overline X \cap O_\tau$ and $\overline X'\cap O_\tau$ meet properly in $N(\tau)$, then the intersections themselves must meet properly in $O_\tau$, by part (1) of the foundational theorem in Section~\ref{sec:tropicalization}.  If this holds for all cones $\sigma$ containing $\tau$, then $\overline X \cap V(\tau)$ meets $\overline X'\cap V(\tau)$ properly in $V(\tau)$.  Therefore (1) implies (2).  

Now, since $\Trop(X)$ and $\Trop(X')$ are subfans of $\Sigma$, the closures $\overline X$ and $\overline X'$ meet all orbits of $Y(\Sigma)$ properly (see Section~\ref{sec:compatibility}).  Hence, if (2) holds then every component of $\overline X \cap \overline X'\cap V(\tau)$ has codimension $j + j'$ in $V(\tau)$.  In particular, if $O_\tau$ is not the dense torus $T$, then $V(\tau)$ does not contain a component of $\overline X \cap \overline X'$.  If this holds for all $\tau$, then every component of $\overline X \cap \overline X'$ meets the dense torus $T$.  This shows that (2) implies (3).  We now prove (1).
  
The tropicalization of $\overline X \cap O_\tau$ is the intersection of the extended tropicalization $\bTrop(\overline X)$ with $N(\tau)$ and, by Lemma~\ref{lem:extended-closure}, this extended tropicalization is the closure of $\Trop(X)$.  Therefore, by Lemma~\ref{lem:boundary}, $\Trop(\overline X \cap O_\tau)$ is the union of the projected faces $\sigma_\tau$ such that $\sigma$ is in $\Trop(X)$ and contains $\tau$.  Similarly, $\Trop(\overline X' \cap O_\tau)$ is the union of the projected faces $\sigma'_\tau$ such that $\sigma'$ is in $\Trop(X')$ and contains $\tau$.  Since the intersection of $\sigma_\tau$ and $\sigma'_\tau$ is exactly $(\sigma \cap \sigma')_\tau$, the codimension of $\sigma_\tau \cap \sigma'_\tau$ in $N(\tau)$ is equal to the codimension of $\sigma \cap \sigma'$ in $N_\RR$.  In particular, if $\Trop(X)$ and $\Trop(X')$ meet properly at $\tau$, then $\Trop(\overline X \cap O_\tau)$ and $\Trop(\overline X' \cap O_\tau)$ meet properly in $N(\tau)$, as required.
\end{proof}

The following lemma says that if $\Trop(X)$ meets $\Trop(X')$ properly then the support of their stable tropical intersection is equal to their set-theoretic intersection.

\begin{lem} \label{lem:support}
Suppose $\Trop(X)$ and $\Trop(X')$ meet properly and let $\tau$ be a facet of their intersection.  Then the tropical multiplicity $i(\tau, \Trop(X) \cdot \Trop(X'))$ is strictly positive.
\end{lem}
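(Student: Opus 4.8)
The plan is to use the diagonal projection argument that was foreshadowed in the proof of Proposition~\ref{prop:tropical subadditivity}, combined with the compatibility results of Section~\ref{sec:compatibility}, to reduce the positivity of the tropical intersection multiplicity at a facet $\tau$ to the standard positivity of algebraic intersection multiplicities at a proper intersection. Since $\tau$ is a facet of $\Trop(X)\cap\Trop(X')$ and the intersection is proper, $\tau$ has codimension $j+j'$ in $N_\RR$, so the tropical intersection multiplicity $i(\tau,\Trop(X)\cdot\Trop(X'))$ is defined by the (local, but here trivial-valuation) displacement rule. I want to show this is positive by exhibiting it as $m_{X\cap tX'}(\tau)$ for a general translate $t$, via Proposition~\ref{prop:translation}, and then showing the right-hand side is positive because $\tau$ actually lies in $\Trop(X\cap tX')$.

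First I would invoke Proposition~\ref{prop:translation}: for $t$ a general point of $T$, $X$ meets $tX'$ properly and $\Trop(X\cap tX')$ equals the stable tropical intersection $\Trop(X)\cdot\Trop(X')$ as weighted polyhedral complexes. Note that the stable tropical intersection depends only on the tropicalizations $\Trop(X)$ and $\Trop(X')$, and in particular is unchanged by replacing $X'$ with a translate, since translation by $t$ does not change $\Trop(X')$. Hence $i(\tau,\Trop(X)\cdot\Trop(X')) = m_{X\cap tX'}(\tau)$, and it suffices to prove that $\tau$ is contained in $\Trop(X\cap tX')$, i.e.\ that some component $Z$ of $X\cap tX'$ has $\Trop(Z)\supseteq\tau$ with positive multiplicity; since multiplicities of tropicalizations of cycles with positive coefficients are automatically positive on facets of each $\Trop(Z_i)$, it is enough to find one component $Z$ whose tropicalization contains $\tau$.

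To produce such a component I would run the diagonal projection argument: let $T'$ be the quotient of $T\times T$ by the diagonal subtorus, $\pi\colon X\times tX'\to T'$ the induced map, and $p\colon \Trop(X)\times\Trop(X')\to N'_\RR$ its tropicalization (which is just the linear quotient map restricted). A point $w$ in the relative interior of $\tau$ lies in $\Trop(X)\cap\Trop(X')$, so it corresponds to a point in $p^{-1}(0)$, and $p^{-1}(0)$ is identified with $\Trop(X)\cap\Trop(X')$. Now $\pi^{-1}(e)$, where $e$ is the identity of $T'$, is naturally identified with $X\cap tX'$; since $X$ meets $tX'$ properly this fiber has pure dimension $\dim X+\dim X'-\dim T = \dim\tau$. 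Its tropicalization is contained in $p^{-1}(0)=\Trop(X)\cap\Trop(X')$. The point is that near $w$, the set-theoretic intersection $\Trop(X)\cap\Trop(X')$ coincides with $\tau$ (as $\tau$ is a facet and $w$ is in its relative interior), so any component of $\Trop(X\cap tX')$ through $w$ must be supported on $\tau$; and I need to show there is one. This is where I would use continuity/properness of the tropicalization map (equivalently, that tropicalization of a variety is closed of the expected pure dimension, foundational theorem~(1)): $\Trop(X\cap tX')$ is a pure $\dim\tau$-dimensional polyhedral complex contained in $\Trop(X)\cap\Trop(X')$, and it is nonempty near $w$ provided $X\cap tX'$ actually contains points tropicalizing near $w$. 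To secure that, I would use surjectivity of the tropicalization map onto $\Trop(X)\cap\Trop(X')$ for the fiber $\pi^{-1}(e)$ — or, more carefully, combine the fiber-dimension bound with density of the generic fiber to get that $w$ is a limit of points of $\trop(X\cap tX')$, hence $w\in\Trop(X\cap tX')$ since the latter is closed.

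The main obstacle I anticipate is exactly this last point: ruling out the possibility that, although $\Trop(X)\cap\Trop(X')$ has the expected dimension near $w$, the translated intersection $X\cap tX'$ "misses" the facet $\tau$ tropically — i.e.\ establishing that the tropicalization of a fiber of $\pi$ over a point in the image is genuinely a neighborhood of the corresponding point of $p^{-1}(0)$, not just contained in it. The clean way around this is to apply Proposition~\ref{prop:translation} together with the fact that stable tropical intersections are balanced polyhedral complexes of pure codimension $j+j'$ whose support is contained in $\Trop(X)\cap\Trop(X')$: since near $w$ the only polyhedron of that codimension available inside $\Trop(X)\cap\Trop(X')$ is $\tau$ itself, the stable intersection either contains $\tau$ with positive multiplicity or is empty near $w$ — and it cannot be empty there because the identity component of the torus acts transitively, so some translate $tX'$ does meet $X$ in a point tropicalizing arbitrarily close to $w$ (using surjectivity of tropicalization applied to the dominant map $T\times(X\cap tX')$-family, or equivalently to the proper fiber of $\pi$). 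I would phrase the argument so that the only external inputs are Proposition~\ref{prop:translation}, the purity and closedness in foundational theorem~(1), and positivity of algebraic intersection multiplicities at proper intersections, which is standard.
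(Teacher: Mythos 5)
Your overall strategy---reduce to an intersection with a general translate via Proposition~\ref{prop:translation} and then run the diagonal projection $\pi : X \times X' \to T'$---is the same as the paper's, and your dimension count does correctly show that $\pi$ is dominant (every nonempty fiber $X \cap sX'$ has tropicalization inside $\Trop(X)\cap\Trop(X')$, hence dimension at most $n-j-j'$, which matches the lower bound $\dim X + \dim X' - \dim T'$). But there is a genuine gap at exactly the point you flag. Dominance of $\pi$ only gives that $X \cap tX'$ is nonempty of pure dimension $n-j-j'$; its tropicalization is then a pure $(n-j-j')$-dimensional closed subcomplex of $\Trop(X)\cap\Trop(X')$, but nothing so far prevents it from missing the particular facet $\tau$ altogether (for instance, if $\Trop(X)\cap\Trop(X')$ is disconnected or has several facets, the support could a priori concentrate on the others; balancing does not force full support, since a balanced subcomplex of the right dimension need not be everything). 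Your proposed repair does not close this: in the trivial-valuation setting every $K$-point of $T$ tropicalizes to the origin, so ``a point of $X\cap tX'$ tropicalizing arbitrarily close to $w$'' is not meaningful, and surjectivity of tropicalization (foundational theorem (3)) only produces points over $\Trop(\,\cdot\,)\cap N_G = \{0\}$. Whether $\tau \subseteq \Trop(X\cap tX')$ is a statement about the initial degenerations $(X\cap tX')_w$ for $w$ in the relative interior of $\tau$, and no argument for their nonemptiness is given.

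The paper closes this gap by first \emph{localizing at $\tau$}: using Proposition~\ref{prop:proper at boundary} and the projection formula $\deg(\overline X \cdot \overline X' \cdot V(\tau)) = \deg\big((\overline X\cdot V(\tau))\cdot(\overline X'\cdot V(\tau))\big)$ in the smooth toric variety $Y(\Sigma)$, one replaces $X$ and $X'$ by the cycles $\overline X\cdot O_\tau$ and $\overline X'\cdot O_\tau$ in the orbit $O_\tau$, which have complementary dimension and whose tropicalizations meet properly in $N(\tau)$ at the single point $0$ corresponding to $\tau$. After this reduction, the tropical intersection multiplicity at $\tau$ becomes a total intersection number, so mere nonemptiness of the general fiber of the diagonal projection (i.e., dominance, which your argument does establish) is equivalent to positivity. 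Some such localization step---either via $V(\tau)$ as in the paper, or equivalently by passing to initial degenerations at a point of the relative interior of $\tau$ and quotienting by the torus $T_\tau$---is essential, and is what is missing from your proposal.
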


\begin{proof}
First, we reduce to the case where $X$ and $X'$ have complementary dimension.  Recall that the tropical multiplicity $m(\sigma)$ of a facet $\sigma$ in $\Trop(X)$ is equal to the algebraic intersection number $\deg(\overline X \cdot V(\sigma))$ in $Y(\Sigma)$, as discussed in Section~\ref{sec:compatibility}, and the intersection number $\deg(\overline X \cdot \overline X' \cdot V(\tau))$ in $Y(\Sigma)$ is equal to the intersection number
\[
\deg\big((\overline X \cdot V(\tau)) \cdot (\overline X' \cdot V(\tau))\big)
\]
in $V(\tau)$ \cite[Example~8.1.10]{IT}, because $V(\tau)$ is smooth.  Because $\Trop(X)$ and $\Trop(X')$ are subfans of $\Sigma$, the closures $\overline X$ and $\overline X'$ in $Y(\Sigma)$ meet all torus orbits properly, and it follows that the tropical intersection multiplicity $i(\tau, \Trop(X) \cdot \Trop(X'))$ is equal to the tropical intersection multiplicity at zero of $\Trop(\overline X \cdot O_\tau)$ and $\Trop(\overline X' \cdot O_\tau)$ inside $N(\tau)$.  By Proposition~\ref{prop:proper at boundary}, the intersection cycles $\overline X \cdot O_\tau$ and $\overline X' \cdot O_\tau$ have complementary dimension in $O_\tau$ and their tropicalizations meet properly in $N(\tau)$.  The same is then true for each of the reduced components of $\overline X \cdot O_\tau$ and $\overline X' \cdot O_\tau$, and it suffices to prove the proposition in this case.

Assume $X$ and $X'$ have complementary dimension.  Since their tropicalizations meet properly, by hypothesis, their intersection must be the single point zero.  Consider the quotient $T'$ of $T \times T$ by its diagonal subtorus, and the induced projection
\[
\pi: X \times X' \rightarrow T'.
\]
By Proposition~\ref{prop:translation}, the tropical intersection number $\Trop(X) \cdot \Trop(X')$ is equal to the length of $X \cap tX'$ for a general point $t \in T$, which is equal to the length of the general fiber of $\pi$.  In particular, to prove the proposition, it is enough to show that $\pi$ is dominant.  Now, since the valuation is trivial, the tropicalization of any fiber of $\pi$ is contained in the preimage of zero under the induced projection
\[
\Trop(X) \times \Trop(X') \rightarrow N'_\RR,
\]
which is the single point zero, by hypothesis.  In particular, every nonempty fiber of $\pi$ is zero-dimensional, and hence $\pi$ is dominant, as required.
\end{proof}

We can now easily prove the desired result.

\begin{proof}[Proof of Theorem~\ref{thm:trivial val}]
Let $\tau$ be a facet of $\Trop(X) \cap \Trop(X')$. By Propositions~\ref{prop:cycles} and \ref{prop:proper at boundary}, the tropical intersection multiplicity $i(\tau, \Trop(X) \cdot \Trop(X'))$ is a weighted sum over components of $X \cap X'$
\[
i(\tau, \Trop(X) \cdot \Trop(X')) = \sum_Z i(Z, X \cdot X';T) m_Z(\tau).
\]
By Lemma~\ref{lem:support}, this multiplicity is strictly positive, so there is at least one component $Z$ of $X \cap X'$ such that $\Trop(Z)$ contains $\tau$.  This proves that $\tau$ is contained in $\Trop(X \cap X')$, and hence $\Trop(X \cap X')$ is equal to $\Trop(X) \cap \Trop(X')$, set-theoretically.

Finally, to see the inequality on multiplicities, recall that the multiplicity of $\tau$ in $\Trop(X \cap X')$ is
\[
m_{X \cap X'}(\tau) = \sum_Z \length_{\cO_{T,Z}}(\cO_{X \cap X',Z}) m_{Z}(\tau),
\]
by Lemma~\ref{lem:linearity}.  The inequality $m_{X \cap X'}(\tau) \geq i(\tau, \Trop(X) \cdot \Trop(X'))$ follows by comparing the two summation formulas term by term, because $\length_{\cO_{T,Z}}(\cO_{X \cap X',Z})$ is at least as large as the intersection multiplicity $i(Z, X \cdot X';T)$ \cite[Proposition~8.2]{IT}.
\end{proof}

\smallskip

\section{Geometry over valuation rings of rank 1} \label{sec:dim}

In \cite{Nagata66}, Nagata began extending certain results from dimension theory to non-noetherian rings.  Here we continue in Nagata's spirit with a view toward tropical geometry, focusing on valuation rings of rank 1 and showing that these rings have many of the pleasant properties of regular local rings.  

Recall that a valuation ring of rank 1 is an integral domain $R$ whose field of fractions $K$ admits a nontrivial valuation $\nu: K^* \rightarrow \RR$ such that the nonzero elements of $R$ are exactly the elements of $K^*$ of nonnegative valuation.  It is a local ring with exactly two prime ideals, the zero ideal and the maximal ideal.  If $K$ is algebraically closed or, more generally, if the valuation is not discrete, then $R$ is not noetherian, because the maximal ideal is not finitely generated.

Throughout this section, $R$ is a valuation ring of rank 1, with fraction field $K$ and residue field $k$.  In this section only, we allow the possibility that $K$ is not algebraically closed, because the results of this section hold equally for nonclosed fields and the greater generality creates no additional difficulties.  We fix
\[
S = \Spec R,
\]
and use calligraphic notation for schemes over $S$.  If $\cX$ is a scheme over $S$, we write $\cX_K = \cX \times_S \Spec K$ for the generic fiber and $\cX_k= \cX \times_S \Spec k$ for the special fiber.  

A module is flat over $R$ if and only if it is torsion-free \cite[Exercise~10.2]{Matsumura89}, so an integral scheme $\cX$ is flat over $S$ if and only if it is dominant.  Furthermore, a scheme that is flat and locally of finite type over an integral scheme is necessarily locally of finite presentation \cite[Theorem~3.4.6 and Corollary 3.4.7]{RaynaudGruson71}.  In particular, if $\cX$ is an integral scheme that is surjective and locally of finite type over $S$ then the special fiber $\cX_k$ is of pure dimension equal to the dimension of the generic fiber $\cX_K$.  This follows from \cite[Theorem~12.1.1(i)]{EGA4.3}, applied to the generic point of each component of the special fiber.  We will use these technical facts repeatedly throughout the section, without further mention.  

Dimension theory over valuation rings of rank 1 involves some subtleties.  For instance, if $R$ is not noetherian then the formal power series ring $R[[x]]$ has infinite Krull dimension \cite{Arnold73}.  Because dimensions are well-behaved under specialization over $R$, such pathologies are always visible in at least one of the fibers.  For $\cX = \Spec R[[x]]$, the special fiber is $\Spec k[[x]]$, which is one dimensional, but $R[[x]] \otimes K$ is the subring of $K[[x]]$ consisting of formal power series whose coefficients have valuations bounded below.  This ring, and hence the generic fiber of $\Spec R[[x]]$, is infinite dimensional.  Nevertheless, for a scheme of finite type over $S$, the generic fiber and special fiber are schemes of finite type over $K$ and $k$, respectively, and such pathologies do not occur.

Many of the results of this section have natural generalizations to more general base schemes.  See Appendix~\ref{app:closed-pts} for details.

\subsection{Subadditivity and lifting over valuation rings of rank 1}  

Serre famously proved that codimension is subadditive under intersections in regular schemes \cite[Theorem~V.3]{Serre65}.  In other words, for any irreducible closed subschemes $X$ and $X'$ of a regular scheme $Y$, and any irreducible component $Z$ of $X \cap X'$, we have 
\[
\codim_Y Z \leq \codim_Y X + \codim_Y X'.
\]
The same is then necessarily true for schemes smooth over $Y$.  We extend Serre's theorem to smooth schemes over rank 1 valuation rings, as follows.  Recall that we have fixed $S = \Spec R$, where $R$ is a valuation ring of rank 1.

\begin{thm} \label{thm:int reg}
Let $\cY$ be smooth over $S$. Then codimension is subadditive under intersection in $\cY$.
\end{thm}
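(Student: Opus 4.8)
The plan is to transport Serre's classical proof of subadditivity — reduction to the diagonal, followed by intersecting with a regular sequence — to the present non-noetherian setting. The two ingredients that are not purely formal are a Krull principal ideal theorem for schemes of finite type over $S$, obtained by noetherian approximation, and the facts that such schemes are catenary and satisfy the expected dimension formula relating a scheme over $S$ to its fibers; both rest on Nagata's altitude formula for the rank $1$ valuation ring $R$.

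Passing to a connected component, we may assume $\cY$ is irreducible — it is normal, since $S$ is — and smooth over $S$ of relative dimension $n$, so that $\cY$ dominates $S$ and $\dim \cY = n+1$; since $\cY$ is catenary, $\codim_\cY \cW = (n+1) - \dim \cW$ for every irreducible closed $\cW \subseteq \cY$. We may also take the subschemes $\cX, \cX'$ whose intersection we bound to be irreducible. If either lies in the special fiber $\cY_k$, then so does $\cX \cap \cX'$, and — comparing codimensions in $\cY$ with those in $\cY_k$, and using that the special fiber of an integral scheme dominant and of finite type over $S$ has pure dimension one less than the scheme — the assertion follows from Serre's theorem for the regular scheme $\cY_k$. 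So assume $\cX$ and $\cX'$ both dominate $S$; then they, and hence $\cX \times_S \cX'$, are flat over $S$, and thus of finite presentation, and $\cX \times_S \cX'$ is equidimensional of dimension $\dim \cX_K + \dim \cX'_K + 1 = \dim \cX + \dim \cX' - 1$.

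Now reduce to the diagonal. Since $\cY$ is smooth over $S$, the diagonal $\delta \colon \cY \hookrightarrow \cY \times_S \cY$ is a regular closed immersion of codimension $n$, so in a Zariski neighborhood of any point of $\delta(\cY)$ its ideal is generated by $n$ elements $g_1, \dots, g_n$. The cartesian square $\cX \cap \cX' = (\cX \times_S \cX') \times_{\cY \times_S \cY} \cY$ identifies $\cX \cap \cX'$, via $\delta$, with $(\cX \times_S \cX') \cap \delta(\cY)$, so any irreducible component $\cZ$ of $\cX \cap \cX'$ is, near its generic point, an irreducible component of $V(g_1, \dots, g_n)$ in the affine scheme $\cX \times_S \cX'$. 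The Krull principal ideal theorem over $S$ then gives $\codim_{\cX \times_S \cX'} \cZ \leq n$, whence
\[
\dim \cZ \;\geq\; \dim(\cX \times_S \cX') - n \;=\; \dim \cX + \dim \cX' - 1 - n .
\]
Feeding this into the codimension formula for $\cY$,
\[
\codim_\cY \cZ \;=\; (n+1) - \dim \cZ \;\leq\; (n+1 - \dim \cX) + (n+1 - \dim \cX') \;=\; \codim_\cY \cX + \codim_\cY \cX' ,
\]
which is the required subadditivity.

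I expect the real work to be in the two non-noetherian inputs, not in the diagonal argument, which is purely formal. Proving the Krull principal ideal theorem over $S$ calls for a careful noetherian approximation: one descends both the ring $\cX \times_S \cX'$ and the finitely many equations $g_i$ to a scheme of finite type over a noetherian subring $R_0$ of $R$, applies the classical theorem there, and then transfers the height bound back — the last step relying on Nagata's altitude formula to control the dimension of $R$ over $R_0$. And establishing Nagata's altitude formula for rank $1$ valuation rings is also what makes every finite type $S$-scheme catenary and legitimizes the dimension facts — catenarity, and that an integral scheme over $S$ has dimension one more than its generic fiber — on which the bookkeeping above tacitly rests throughout.
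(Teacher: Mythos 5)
Your proof is correct and takes essentially the same route as the paper's: reduction to the diagonal (a local complete intersection of codimension $n$ by smoothness), a Krull principal ideal theorem over $S$ proved by noetherian approximation, and the Nagata-style codimension/dimension formulas for flat finite type schemes over a rank $1$ valuation ring. The only organizational difference is that you convert to dimensions and split off the cases where a subscheme lies in the special fiber (delegating those to subadditivity in $\cY_k$), whereas the paper bounds $\codim_{\cY \times_S \cY}(\cX \times_S \cX')$ uniformly in all cases and then works with codimensions throughout.
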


\noindent In this respect, valuation rings of rank 1 behave like regular local rings.

\begin{rem}
Subadditivity of codimension is often used to deform points in families using dimension counting arguments.  Such techniques are essential, for instance, in the theory of limit linear series developed by Eisenbud and Harris \cite{EisenbudHarris86}.  Here, we use subadditivity of codimension to lift intersection points from the special fiber to the generic fiber in schemes of finite type over valuation rings of rank 1.\footnote{The assumption that schemes are of finite type over $S$ is crucial for the main results of this section.  In the tropical setting, if $\nu$ is nontrivial and $w$ is not in $N_G$, then closed points of $X_w$ never lift to closed points of $X$.  However, this does not contradict Theorem~\ref{thm:closed-pts} because in this case $\cX^w$ is not of finite type.  See Appendix~\ref{app:finite type}.}
\end{rem}

As an application of Theorem~\ref{thm:int reg} we prove the following. 

\begin{thm} \label{thm:lift to K}
Let $\cY$ be smooth over $S$ with closed subschemes $\cX$ and $\cX'$ of pure codimension $j$ and $j'$, respectively.  Suppose the special fibers $\cX_k$ and $\cX'_k$ intersect in codimension $j + j'$ at a point $x$ in $\cY_k$.  Then 
\begin{enumerate}
\item There is a point in $\cX_K \cap \cX'_K$ specializing to $x$.  
\item If $x$ is closed then there is a closed point in $\cX_K \cap \cX'_K$ specializing to $x$. More generally, if $k(x)$ denotes the residue field of $x$, there is a point $x' \in \cX_K \cap \cX'_K$ specializing to $x$ and satisfying
$$\trdeg k(x')/K = \trdeg k(x)/k.$$
\end{enumerate}
\end{thm}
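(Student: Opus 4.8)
The plan is to reduce everything to Theorem~\ref{thm:int reg} (subadditivity of codimension over $S$) together with the elementary dimension theory of finite type schemes over $S$ recalled at the start of the section. The key observation is that the hypothesis forces the point $x$ to lie on a component of $\cX \cap \cX'$ that dominates $S$, and a dominant integral scheme over $S$ is flat, hence its generic fiber is nonempty and has the ``expected'' dimension.

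First I would localize: replace $\cY$ by an affine open neighborhood of $x$, so that $\cY$ is of finite type over $S$, and replace $\cX$, $\cX'$ by their traces on this neighborhood. Let $\cZ$ be an irreducible component of $\cX \cap \cX'$ passing through $x$; since $x$ lies in the special fiber, $\cZ$ meets $\cY_k$, and $\cZ \cap \cY_k$ is a union of components of $\cX_k \cap \cX'_k$ through $x$, each of which has codimension exactly $j + j'$ in $\cY_k$ by hypothesis. Now apply Theorem~\ref{thm:int reg} to the smooth $S$-scheme $\cY$: the component $\cZ$ has codimension at most $j + j'$ in $\cY$. On the other hand, the special fiber $\cZ_k$ is a closed subscheme of $\cZ$ which, being a component of $\cX_k \cap \cX'_k$, already has codimension $j + j'$ in $\cY_k$, hence dimension $\dim \cY_k - (j+j') = \dim \cY_K - (j + j')$ (using that $\cY$ is smooth, so $\dim \cY_k = \dim \cY_K$ locally). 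If $\cZ$ did not dominate $S$, then $\cZ$ would be contained in the special fiber $\cY_k$, giving $\codim_{\cY} \cZ \geq 1 + \codim_{\cY_k} \cZ_k = 1 + j + j'$, contradicting subadditivity. Therefore $\cZ$ dominates $S$; being integral and dominant over the valuation ring $R$, it is flat, hence of finite presentation, so its generic fiber $\cZ_K$ is nonempty of pure dimension $\dim \cZ_k$. This $\cZ_K$ is contained in $\cX_K \cap \cX'_K$, and any point of $\cZ_K$ specializes, along $\cZ$, to a point of $\cZ_k \ni x$; in fact since $\cZ$ is irreducible with closed point $x$ possibly not its unique generic-fiber-specialization target, I would note that the generic point of $\cZ$ specializes to every point of $\cZ_k$, in particular to $x$, and lies in $\cZ_K$. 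This proves (1).

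For (2), suppose $x$ is a closed point of $\cY_k$. Then $x$ is a closed point of the integral scheme $\cZ$ of finite type over $S$. The generic fiber $\cZ_K$ is a nonempty scheme of finite type over $K$, so it has a closed point $\xi$. The residue field $\kappa(\xi)$ is a finite extension of $K$; I want to choose $\xi$ specializing to $x$. Here I would argue as follows: by (1) there is some point $\eta$ of $\cZ_K$ specializing to $x$; take the closure $\overline{\{\eta\}}$ in $\cZ$, an integral closed subscheme still dominating $S$ and still containing $x$ in its special fiber. The special fiber $\overline{\{\eta\}}_k$ is then nonempty of finite type over $k$ and contains $x$; since $x$ is closed in $\cY_k$ it is closed in $\overline{\{\eta\}}_k$, hence (because closedness over a field detects closed points) $x$ is a closed point of $\overline{\{\eta\}}$ as well. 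Now $\overline{\{\eta\}}$ is an integral finite type $S$-scheme with a closed point $x$ in the special fiber; I claim its generic fiber, which is nonempty, contains a closed point of $\overline{\{\eta\}}$ specializing to $x$. This is a Jacobson-type statement: finite type $S$-schemes are ``Jacobson over $S$'' in an appropriate sense, and the expected reference is the topology of finite type morphisms developed in the appendix, i.e. Theorem~\ref{thm:closed-pts} (or Appendix~\ref{app:closed-pts}), which is exactly designed to say that closed points of the special fiber are limits of closed points of the generic fiber of a finite type integral $S$-scheme. Applying it to $\overline{\{\eta\}}$ produces the desired closed point of $\cX_K \cap \cX'_K$ specializing to $x$.

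The main obstacle, and the place where I expect the real work to sit, is not (1) — which is a clean combination of Theorem~\ref{thm:int reg} and flatness over $R$ — but the passage from a non-closed generic-fiber point specializing to $x$ to a \emph{closed} one in part (2). Over a noetherian base this would be routine Jacobson-ring nonsense, but over a rank $1$ valuation ring $R$ one genuinely needs the finite type hypothesis and the specific topological results about finite type morphisms proved in the appendix; everything else (localizing, identifying a dominating component, using $\dim \cY_k = \dim \cY_K$ from smoothness) is formal.
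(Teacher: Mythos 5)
Your proposal is correct and follows essentially the same route as the paper's proof: choose a component $\cZ$ of $\cX \cap \cX'$ through $x$, combine subadditivity (Theorem~\ref{thm:int reg}) with the codimension formula of Lemma~\ref{lem:nagata-easy1} to force $\cZ$ to meet the generic fiber, and then invoke Theorem~\ref{thm:closed-pts} for the closed-point refinement. The only difference is cosmetic: the paper applies Theorem~\ref{thm:closed-pts} directly to $\cZ$ rather than passing to the closure $\overline{\{\eta\}}$, which in your setup is $\cZ$ itself anyway.
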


\noindent In Section~\ref{subsec:tor} we also prove a principle of continuity for intersection multiplicities in families over S.

\begin{rem}
The theorem reduces easily to the case where $\cX$ and $\cX'$ are reduced and irreducible and then, using subadditivity, it is not difficult to show that both must meet the generic fiber $\cY_K$ and are therefore flat over $S$.  Our methods also give a more general lifting theorem for intersections of flat subschemes over an arbitrary base scheme (Theorem~\ref{thm:lift general}), even when subadditivity fails.  Here we proceed through subadditivity because it eliminates the need for a flatness hypothesis and makes the lifting arguments particularly transparent, especially for those familiar with similar arguments over regular local rings.
\end{rem}

\subsection{A principal ideal theorem}

Recall that Krull's principal ideal theorem, translated into geometric terms, says that every component of a Cartier divisor on a noetherian scheme has codimension $1$.  Here we prove the following generalization to valuation rings of rank 1.

\begin{thm}\label{thm:val-pit} 
Let $\cX$ be of finite type over $S$, and let $\cZ$ be a locally principal closed subscheme of $\cX$. Then every irreducible component of $\cZ$ has codimension at most $1$ in each component of $\cX$ that contains it.
\end{thm}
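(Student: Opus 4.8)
The plan is to bootstrap from the classical Krull principal ideal theorem by noetherian approximation. The statement is local and may be checked at the generic point of each component of $\cZ$, so I would take $\cX=\Spec A$ affine and, after replacing $\cX$ by one of its irreducible components with the reduced structure, assume that $A$ is an integral domain of finite type over $R$ and that $\cZ$ is the vanishing locus $V(f)$ of a single $f\in A$. If $f$ is zero or a unit there is nothing to prove, so assume neither; the claim is then that every minimal prime $\fp$ of $fA$ has height at most $1$ in $A$.

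The image of $\Spec A$ in $S$ is irreducible, hence equal to $\{(0)\}$, to $\{\fm\}$, or to all of $S$. If it is $\{(0)\}$, then every prime of $A$ restricts to $(0)$, so inverting $R\smallsetminus\{0\}$ alters neither the topological space $\Spec A$ nor the closed subset $\cZ$; one then replaces $A$ by the noetherian ring $A_K=A\otimes_R K$ of finite type over $K$ and concludes by Krull's theorem. If it is $\{\fm\}$, then $R\to A$ has kernel $\fm$ because $A$ is a domain, so $A$ is of finite type over the field $k$ and Krull's theorem applies directly. So assume $\Spec A\to S$ is surjective. Then $A$ is dominant over $R$, hence flat, hence of finite presentation over $R$; consequently every $0\ne\pi\in\fm$ is a nonzerodivisor on $A$, and $\sqrt{\pi A}=\sqrt{\fm A}$ since $\sqrt{(\pi)}=\fm$ in $R$. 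Now fix a minimal prime $\fp$ of $fA$. If $\fp\cap R=(0)$, then $A_\fp$ is a localization of $A_K$, the extension of $\fp$ to $A_K$ is a minimal prime of $fA_K$, and $\dim A_\fp$ equals the dimension of that localization of the noetherian ring $A_K$, which is at most $1$ by Krull. If instead $\fp\cap R=\fm$, then $\fm A\subseteq\fp$, so $\fp$ is minimal over $\fm A+fA$, hence over $\pi A+fA$ for any $0\ne\pi\in\fm$, and it remains to bound $\dim A_\fp$ in this case.

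For this, write $R$ as the filtered union of its finitely generated, hence noetherian, subrings $R_\lambda$, and accordingly $A$ as the filtered union of the noetherian subdomains $A_\lambda=R_\lambda[a_1,\dots,a_n]$, where $a_1,\dots,a_n$ generate $A$ over $R$; for $\lambda$ large, $f$ and a chosen $\pi$ lie in $A_\lambda$. Setting $\fp_\lambda=\fp\cap A_\lambda$, localization gives $A_\fp=\varinjlim_\lambda (A_\lambda)_{\fp_\lambda}$, a filtered colimit of noetherian local domains; since any finite chain of primes below $\fp$ stays strictly increasing after contraction to $A_\lambda$ for $\lambda$ large, $\dim(A_\lambda)_{\fp_\lambda}\ge\dim A_\fp$. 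Because $\fp$ is minimal over $fA$ one has $\sqrt{fA_\fp}=\fp A_\fp$. The maximal ideal of each $(A_\lambda)_{\fp_\lambda}$ is finitely generated, and the relations witnessing that a power of each of its generators lies in $fA_\fp$ are all visible at some finite stage; combining this with the control over the closed fibers of the transition maps $(A_\lambda)_{\fp_\lambda}\to(A_\mu)_{\fp_\mu}$ afforded by the topology of finite type morphisms (Appendix~\ref{app:finite type}), one would find a $\mu$ for which $\sqrt{f(A_\mu)_{\fp_\mu}}$ is the maximal ideal of $(A_\mu)_{\fp_\mu}$. Krull's principal ideal theorem over this noetherian local ring then gives $\dim(A_\mu)_{\fp_\mu}\le1$, whence $\dim A_\fp\le1$.

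I expect the main obstacle to be precisely this last transfer: showing that for $\lambda$ sufficiently large the noetherian local ring $(A_\lambda)_{\fp_\lambda}$ already sees $f$ as an ideal whose radical is the maximal ideal. The subtlety is that the inclusion $A_\lambda\hookrightarrow A$ is neither flat nor finitely presented, so the usual noetherian-approximation dictionary does not apply verbatim, and one must exploit the special geometry of finite type schemes over a rank-$1$ valuation ring — in particular that $\Spec R$ has only the two points $(0)$ and $\fm$, and that $\fm=\sqrt{(\pi)}$ is the radical of a principal ideal — to keep the fibers of the transition morphisms zero-dimensional over the point of interest.
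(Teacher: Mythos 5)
Your reduction to the affine integral case and your handling of all but the last subcase are correct and run parallel to the paper's own argument: whenever the component of $\cZ$ meets the generic fiber (your case $\fp\cap R=(0)$), or $\cX$ lies over a single point of $S$, everything does come down to the classical Krull theorem over $K$ or over $k$. The genuine gap is exactly the step you flag, and it cannot be closed along the lines you suggest: the claim that for $\mu$ sufficiently large the radical of $f(A_\mu)_{\fp_\mu}$ is the maximal ideal is false, not merely unproven. Already for $\cX=S$ itself, $f=\pi\in\fm$ and $\fp=\fm$: if a finitely generated subring $R_\mu\subseteq R$ had $\fm\cap R_\mu$ minimal over $\pi R_\mu$, then $(R_\mu)_{\fm\cap R_\mu}$ would be a one-dimensional noetherian local domain dominated by the valuation ring $R\cap\operatorname{Frac}(R_\mu)$, and Krull--Akizuki would force that valuation ring to be a DVR, so $\nu(\operatorname{Frac}(R_\mu)^*)$ would be cyclic. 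Hence the claim fails for every $R_\mu$ containing two elements of $R$ with $\QQ$-independent valuations --- and your own mechanism for enlarging $R_\mu$ (adjoining witnesses $h,s$ for each relation $sg^N=fh$) is precisely what keeps dragging such elements in, which is why the ``moving target'' regress does not terminate. The appendix on the topology of finite type morphisms (Appendix~\ref{app:closed-pts}) concerns lifting closed points between fibers and provides no control of this kind.

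The paper escapes by never asking the noetherian approximation to remember that $\fp$ is minimal over $(f)$. In the hard case, where the component of $\cZ$ lies in the special fiber, it proves Proposition~\ref{prop:krull}: spread $\cX$ and $\cD=V(f)$ out over a finitely generated subring $R'\subseteq R$, apply the classical principal ideal theorem on the noetherian \emph{total space} $X'$ to see that a component $Z$ of $D'$ has codimension at most $1$ there, and then use upper semicontinuity of fiber dimension (twice, passing through the image $s''$ of the generic point of $Z$) to conclude that $Z_{s'}$ has the full dimension $d$ of the special fiber. Thus every component of $\cZ$ is an entire component of $\cX_k$, and Lemma~\ref{lem:nagata-easy1} converts this into $\codim_\cX\cZ=1$. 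The essential difference from your plan is that fiber dimension is semicontinuous and therefore survives the non-flat, non-finitely-presented comparison between $A$ and its noetherian models, whereas the property ``$\fp$ is a minimal prime of $(f)$'' does not.
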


\noindent The same is not true for valuation rings that are not of rank 1.  If $A$ is such a ring then $\Spec A$ has principal subschemes of codimension greater than 1.  A key step in the proof of the theorem is the following technical proposition, which we prove by noetherian approximation.

\begin{prop}\label{prop:krull} Let $\cX$ be irreducible, locally of finite type, and flat over $S$.  Suppose that $\cD$ is a locally principal closed subscheme in $\cX$ that does not meet the generic fiber $\cX_K$.  Then every irreducible component of $\cD_k$ is an irreducible component of $\cX_k$.
\end{prop}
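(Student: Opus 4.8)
The plan is to reduce to the noetherian case by writing $R$ as a filtered colimit of its noetherian subrings and using standard spreading-out (noetherian approximation) results of \cite[Chapitre~IV, \S8]{EGA4.3}. Concretely, since $\cX$ is of finite type over $S = \Spec R$, and $\cD \subset \cX$ is locally principal, there is a finitely generated $\ZZ$-subalgebra $R_0 \subset R$, a scheme $\cX_0$ of finite type over $S_0 = \Spec R_0$, and a locally principal closed subscheme $\cD_0 \subset \cX_0$, such that $\cX = \cX_0 \times_{S_0} S$ and $\cD = \cD_0 \times_{S_0} S$. Enlarging $R_0$ if necessary, we may assume $\cX_0$ is irreducible and flat over $S_0$ (flatness spreads out by \cite[Theorem~11.2.6]{EGA4.3}, and irreducibility and dominance over $S_0$ can be arranged by passing to the dominant component and shrinking). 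The ring $R_0$ is noetherian, being finitely generated over $\ZZ$. Let $\fp_0 \subset R_0$ be the image of the maximal ideal $\fm \subset R$, and let $\eta_0 \in S_0$ be the generic point; note $\fp_0$ need not be maximal in $R_0$, but it does lie over a point in the closure of $\eta_0$.

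The key point is to control the hypothesis that $\cD$ misses the generic fiber $\cX_K$. This translates to saying that $\cD_0$, restricted over the localization $(R_0)_{\fp_0}$, misses the generic fiber $\cX_0 \times_{S_0} \Spec \operatorname{Frac}(R_0)$ — more carefully, after base change to the valuation ring $R' = R \cap \operatorname{Frac}(R_0)$, which is a rank $1$ valuation ring with the same fraction field as $R_0$, one has that $\cD_0 \times_{S_0} \Spec R'$ misses the generic fiber. Here I would invoke Krull's principal ideal theorem in the noetherian setting: first I would pass from $R_0$ to the local ring $\cO = (R_0)_{\fp_0}$, which is noetherian of dimension one or more, and let $\cX_1 = \cX_0 \times_{S_0} \Spec \cO$. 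Since $\cX_1$ is of finite type over the noetherian ring $\cO$ it is noetherian, the subscheme $\cD_1 = \cD_0 \times_{S_0} \Spec \cO$ is locally principal, so by Krull's Hauptidealsatz every component of $\cD_1$ has codimension at most $1$ in $\cX_1$. The hypothesis that $\cD$ misses $\cX_K$ forces every component of $\cD_1$ to lie in the special-fiber-over-$\cO$ locus $\cX_1 \times_{\cO} \cO/\fp_0\cO$ (the vanishing locus of $\fp_0$); combined with codimension at most $1$ and the flatness of $\cX_1$ over $\cO$, which makes $\cX_{1,\eta}$ of codimension equal to $\dim \cO$ hence at least $1$, a dimension count shows each component of $\cD_1$ is a component of the closed fiber $\cX_1 \times_{\cO} k(\fp_0)$.

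Finally I would descend this conclusion to $k$: the special fiber $\cX_k = \cX \times_S \Spec k$ is the base change of $\cX_1 \times_{\cO} k(\fp_0)$ along the field extension $k(\fp_0) \hookrightarrow k$ (both are residue fields of $R$, $R_0$ at compatible points), and likewise $\cD_k$ is the base change of $\cD_1 \times_{\cO} k(\fp_0)$; since base change along a field extension preserves the property of being a whole irreducible component (irreducible components can only split into several, never merge, and dimension is preserved), every component of $\cD_k$ maps into a component of $(\cD_1)_{k(\fp_0)}$, which is a component of $(\cX_1)_{k(\fp_0)}$, whence every component of $\cD_k$ is a component of $\cX_k$. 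The main obstacle I anticipate is the bookkeeping around the point $\fp_0$: unlike in the equicharacteristic power-series picture, $\fp_0$ is generally not the maximal ideal of $R_0$, so I must be careful that ``special fiber'' over $R_0$ is taken at $\fp_0$, not at a closed point, and that flatness of $\cX_0/S_0$ genuinely gives the codimension lower bound I need for the components lying in the fiber over $\fp_0$. Getting the noetherian approximation compatibilities (irreducibility, flatness, locally principal all descending to a common $R_0$) lined up correctly is routine but must be done with care, citing \cite[\S8.10, \S11.2]{EGA4.3}.
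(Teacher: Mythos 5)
Your overall strategy---spreading out to a finitely generated, hence noetherian, subring $R_0 \subset R$ and invoking Krull's Hauptidealsatz on the noetherian model---is exactly the paper's, and both the approximation bookkeeping and the final descent along the field extension $k(\fp_0) \hookrightarrow k$ are fine. But there is a genuine gap at the central step. You assert that since $\cD$ misses $\cX_K$, ``every component of $\cD_1$ lies in the special-fiber-over-$\cO$ locus.'' This is false. Over the rank $1$ valuation ring $R$ the base has only two points, so missing the generic fiber does force $\cD$ into the special fiber; but this dichotomy is destroyed by noetherian approximation. Over $\cO = (R_0)_{\fp_0}$, which will typically have dimension $\geq 2$, a component $Z$ of $\cD_1$ is only forced to lie over a \emph{non-generic} point of $\Spec \cO$: it can dominate an intermediate prime $\fq$ with $0 \subsetneq \fq \subsetneq \fp_0\cO$ and still meet the closed fiber, a priori in a set of dimension less than $d$. (Already for $\cX_1 = \AA^2_{\cO}$ and $\cD_1 = V(f)$ with $f \in \fp_0\cO$ nonzero, the components of $\cD_1$ dominate height-one primes of $\cO$ and are not contained in the closed fiber.) Your dimension count therefore only treats the components of $\cD_1$ that happen to be vertical over $\fp_0$, and says nothing about the closed fibers of the remaining components---which is precisely where the content of the proposition lies.

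The missing ingredient, and the way the paper closes this gap, is an intermediate fiber. For a component $Z$ of the model of $\cD$ that meets the fiber over $s' = \fp_0$, let $s''$ be the image in $S_0$ of the generic point of $Z$. Since $\cD_0$ misses the generic fiber, $s''$ is not the generic point of $S_0$, so every component of the fiber $X'_{s''}$ has codimension at least $1$ in $X'$, while the Hauptidealsatz gives $\codim_{X'} Z \leq 1$; hence $Z$ is (the closure of) a component of $X'_{s''}$, which is pure of dimension $d$ by semicontinuity of fiber dimension. A second application of semicontinuity, now to the dominant morphism $Z \to \overline{\{s''\}}$, shows every component of $Z_{s'}$ has dimension at least $d$; since flatness makes $X'_{s'}$ pure of dimension $d$, each such component is a component of $X'_{s'}$. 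Without this two-step passage through $s''$ your argument does not go through. (Two minor points: the excursion through $R' = R \cap \operatorname{Frac}(R_0)$ is unnecessary, and the degenerate case $\fp_0 = 0$ is harmless since then $\cD_k$ is empty.)
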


\begin{proof}
Since $\cX$ is flat and locally of finite type over $S$, it is locally of finite presentation over $S$. The question is local, so we may assume that $\cX$ is affine, and in particular of finite presentation over $S$.  Therefore, there exists a finitely generated subring $R' \subset R$, with models $X'$ and $D'$ over $S' = \Spec R'$ of $\cX$ and $\cD$, respectively.  Then $S'$ is irreducible by construction, and we may assume $X'$ is irreducible as well.

Let $s'$ be the image in $S'$ of the closed point in $S$.  Since $\cX_K$ and $\cX_k$ are both pure of the same dimension $d$, the fibers of $X'$ over the generic point and $s'$ are also pure of dimension $d$. Similarly, $\cD_k$ is pure of dimension $d$ if and only if $D'_{s'}$ is, so to prove the proposition it is enough to show that $D'_{s'}$ is pure of dimension $d$.

Let $Z$ be a component of $D'$ that meets the fiber over $s'$.  We will show that $Z_{s'}$ has pure dimension $d$.  Let $s''$ be the image in $S'$ of the generic point of $Z$.  Then $s''$ is a specialization of the generic point of $S'$ and specializes to $s'$, so upper semicontinuity of fiber dimension \cite[Theorem 13.1.3]{EGA4.3} implies that $X'_{s''}$ has pure dimension $d$.  We claim that $Z_{s''}$ is a component of $X'_{s''}$.  To see this, note that $s''$ is not the generic point of $S'$, since $\cD$ does not meet the generic fiber of $\cX$, and hence every component of $X'_{s''}$ has codimension at least 1 in $X'$.  Since $X'$ is noetherian, Krull's principal ideal theorem says that every component of $D'$ has codimension at most 1 in $X'$, and hence $Z_{s''}$ is a component of $X'_{s''}$, as claimed.  In particular, $Z_{s''}$ has pure dimension $d$.  Since $s''$ specializes to $s'$ and $X'_{s'}$ has pure dimension $d$, it follows by upper semicontinuity that $Z_{s'}$ has pure dimension $d$, as required.
\end{proof}

The following two lemmas are special cases of a more general altitude formula over valuation rings of finite rank \cite[Theorem~2]{Nagata66}.  These codimension formulas are used in the proof of the principal ideal theorem, and again in the proof of subadditivity of codimension over valuation rings of rank 1.  We include proofs for the reader's convenience.

\begin{lem} \label{lem:nagata-easy1}
Suppose $\cX$ is irreducible, finite type, and flat over $S$ with $\cZ \subset \cX_k$ an irreducible closed subset of the special fiber.  Then
\[
\codim_\cX \cZ = \codim_{\cX_k} \cZ + 1.
\]
\end{lem}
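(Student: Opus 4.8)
The plan is to pass to an affine chart and turn the statement into a computation of heights of primes in a finitely generated flat domain over $R$. Since codimension depends only on the underlying topological space, and $\cX\to S$ is flat with $\cX_k\ne\emptyset$ (hence surjective), I would first replace $\cX$ by its reduction: an integral scheme, dominant over $S$, hence torsion-free and so flat over $R$; this changes neither $\codim_\cX\cZ$ nor $\codim_{\cX_k}\cZ$. Choosing an affine open $\Spec A$ containing the generic point $\eta$ of $\cZ$---so that $A$ is a finitely generated flat domain over $R$---and letting $\fp\subset A$ be the prime of $\eta$, one has $\fp\supseteq\fm A$ and hence $\fp\cap R=\fm$, and the assertion becomes $\operatorname{ht}_A(\fp)=\operatorname{ht}_{A/\fm A}(\fp/\fm A)+1$. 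Set $h=\operatorname{ht}_{A/\fm A}(\fp/\fm A)$ and $d=\dim\cX_K$; by the fact recalled earlier in this section $\cX_k$ is pure of dimension $d$, so $A/\fm A$ is equidimensional of dimension $d$, and since it is catenary we get $h=d-\dim\cZ$. The inequality $\operatorname{ht}_A(\fp)\ge h+1$ is the easy half: lift a saturated chain of primes of $A/\fm A$ descending from $\fp/\fm A$ to a chain of primes of $A$ lying between $\fp$ and $\fm A$, then adjoin $(0)\subsetneq\fm A$ at the bottom, using that $\fm A\ne 0$ because $A$ is torsion-free over $R$ and $\fm\ne 0$.

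The reverse inequality $\operatorname{ht}_A(\fp)\le h+1$ is where the real work lies, and is the step I expect to be the main obstacle. I would take an arbitrary chain of primes $\fp=\fq_0\supsetneq\fq_1\supsetneq\cdots\supsetneq\fq_n$ in $A$ and, after extending, assume $\fq_n=(0)$. Because $R$ has only the two primes $(0)$ and $\fm$ and $\fq_i\cap R$ is non-increasing in $i$, there is a threshold $t\in\{1,\dots,n\}$ with $\fq_i\supseteq\fm A$ for $i<t$ and $\fq_i\cap R=(0)$ for $i\ge t$. The top segment $\fq_0\supsetneq\cdots\supsetneq\fq_{t-1}$ is a chain in $A/\fm A$, whence $\dim(A/\fq_{t-1})\ge\dim(A/\fp)+(t-1)=\dim\cZ+(t-1)$. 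The bottom segment $\fq_t\supsetneq\cdots\supsetneq\fq_n=(0)$ consists of primes meeting $R$ only in $(0)$, so under $A\to A_K:=A\otimes_R K$ it gives a strictly decreasing chain of primes of the generic fibre descending from $\fq_tA_K$; since $A_K$ is a finitely generated domain of dimension $d$ over the field $K$, this yields $n-t\le\operatorname{ht}_{A_K}(\fq_tA_K)=d-\dim(A_K/\fq_tA_K)$.

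The bridge between the two segments is that $A/\fq_t$ is a finitely generated flat domain over $R$ that is surjective over $S$: its generic fibre $A_K/\fq_tA_K$ is nonzero, and its special fibre $A/(\fq_t+\fm A)$ is nonzero because $\fq_t+\fm A\subseteq\fq_{t-1}\ne A$. By the fact recalled earlier, this special fibre is then pure of dimension $\dim(A_K/\fq_tA_K)$; and since $A/\fq_{t-1}$ is a quotient of $A/(\fq_t+\fm A)$, this forces $\dim(A/\fq_{t-1})\le\dim(A_K/\fq_tA_K)$. Combining the three inequalities cancels $\dim(A_K/\fq_tA_K)$ and gives $n\le d-\dim\cZ+1=h+1$; taking the supremum over chains yields the bound, and simultaneously the finiteness of $\operatorname{ht}_A(\fp)$. (Alternatively one could simply invoke Nagata's altitude formula for valuation rings of finite rank; I would prefer this self-contained argument, which uses only the dimension-theoretic facts already in place in this section.)
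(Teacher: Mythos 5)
Your proof is correct and is essentially the paper's argument rendered in affine, prime-ideal language: both halves bound an arbitrary chain by splitting it where it crosses from the special fiber into subvarieties meeting the generic fiber, and both use flatness (purity of special-fiber dimension for integral dominant finite type schemes over $S$) to bridge the two segments. The paper phrases this more compactly by observing that $W \mapsto \dim W_k$ is nondecreasing along any chain of irreducible closed subsets between $\cZ$ and $\cX$ and strictly increasing except at most one step, which is exactly your threshold $t$.
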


\begin{proof}
A maximal chain of irreducible closed subsets between $\cZ$ and $\cX_k$ can be extended by adding $\cX$ at the end, so $\codim_\cX \cZ$ is at least $\codim_{\cX_k} \cZ + 1$.  We now show that $\codim_{\cX} \cZ$ is at most $\codim_{\cX_k} \cZ + 1$.

If $W \subset W'$ is a strict inclusion of irreducible closed subsets of $\cX$ that meet the special fiber then the dimension of $W_k$ is less than or equal to the dimension of $W'_k$.  This inequality is strict if $W$ and $W'$ are both contained in the special fiber.  Similarly, if $W$ and $W'$ both meet the generic fiber then $\dim W_K$ is less than $\dim W'_K$, and the dimensions of the special fibers are equal to the dimensions of the respective generic fibers, by flatness.  Therefore, if $\dim W_k$ is equal to $\dim W'_k$ then $W$ is contained in the special fiber and $W'$ is not.  This can happen at most once in any chain of inclusions, so any chain between $\cZ$ and $\cX$ has length at most $\dim \cX_k - \dim Z + 1$, as required.
\end{proof}

\begin{lem}\label{lem:nagata-easy} Suppose $\cX$ is irreducible, finite type, and flat over $S$, and let $\cZ \subset \cZ'$ be irreducible closed subsets. 
Then
\[
\codim_\cX \cZ=\codim_\cX \cZ' + \codim_{\cZ'} \cZ.
\]
\end{lem}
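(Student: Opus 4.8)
The plan is to reduce the statement to the two codimension formulas we have just established, namely Lemma~\ref{lem:nagata-easy1} and the obvious additivity of codimension within the special or generic fiber, by a careful case analysis according to whether the subsets $\cZ$ and $\cZ'$ are contained in the special fiber $\cX_k$ or meet the generic fiber $\cX_K$. Since $\cX$ is flat over $S$, it is dominant, so $\cX_K$ is nonempty and there are exactly three cases: (i) $\cZ'$ meets the generic fiber (and hence so does any chain starting from it going up, though $\cZ$ itself may or may not); more precisely (i) both $\cZ$ and $\cZ'$ meet $\cX_K$; (ii) $\cZ \subset \cX_k$ but $\cZ'$ meets $\cX_K$; (iii) both $\cZ$ and $\cZ'$ are contained in $\cX_k$.

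First I would dispose of case (iii): if $\cZ \subset \cZ' \subset \cX_k$, then one applies Lemma~\ref{lem:nagata-easy1} twice, writing $\codim_\cX \cZ = \codim_{\cX_k}\cZ + 1$ and $\codim_\cX \cZ' = \codim_{\cX_k}\cZ' + 1$, together with the elementary fact that codimension is additive in a noetherian (indeed finite type over a field) ambient space, $\codim_{\cX_k}\cZ = \codim_{\cX_k}\cZ' + \codim_{\cZ'}\cZ$; subtracting gives the claim. Next, case (i): here $\cZ$ and $\cZ'$ both meet the generic fiber, so I would argue that any maximal chain of irreducible closed subsets from $\cZ$ to $\cZ'$, and from $\cZ$ to $\cX$, can be chosen to pass through subsets meeting $\cX_K$, and that codimension is then computed on the generic fiber via flatness — essentially the argument already used inside the proof of Lemma~\ref{lem:nagata-easy1}, where the length of a chain equals $\dim W'_K - \dim W_K$ plus a correction of at most $1$ accounting for a single jump from a subset contained in $\cX_k$ to one meeting $\cX_K$. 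One must track that correction term carefully: it contributes to $\codim_\cX \cZ$ and to $\codim_\cX \cZ'$ but not to $\codim_{\cZ'}\cZ$ precisely when $\cZ'$ (and hence, since we are in case (i), also $\cZ$) meets $\cX_K$, so it cancels in the difference. The remaining case (ii), $\cZ \subset \cX_k$ but $\cZ' \not\subset \cX_k$, is handled by combining: apply Lemma~\ref{lem:nagata-easy1} to $\cZ$ inside $\cX$ to get $\codim_\cX\cZ = \codim_{\cX_k}\cZ + 1$; separately apply Lemma~\ref{lem:nagata-easy1} with $\cZ'$ in place of $\cX$ — since $\cZ'$ is irreducible, flat over $S$ (being dominant), and finite type, and $\cZ \subset \cZ'_k$ — to get $\codim_{\cZ'}\cZ = \codim_{\cZ'_k}\cZ + 1$; finally use additivity of codimension of special fibers, $\codim_{\cX_k}\cZ = \codim_{\cX_k}\cZ'_k + \codim_{\cZ'_k}\cZ$ (noting $\cZ'_k = \cZ' \cap \cX_k$ has codimension $\codim_\cX \cZ'$ in $\cX_k$ by Lemma~\ref{lem:nagata-easy1} read in reverse), and reassemble.

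The main obstacle I anticipate is bookkeeping rather than conceptual: making sure that "the single jump" from inside-the-special-fiber to meeting-the-generic-fiber is counted exactly once and attributed to the right codimension in each case, and confirming that subsets such as $\cZ'$ and $\cZ'_k$ inherit the hypotheses (irreducible, finite type, flat over $S$) needed to reapply Lemma~\ref{lem:nagata-easy1}. Flatness of $\cZ'$ over $S$ holds because $\cZ'$ is an integral scheme dominating $S$ (it meets $\cX_K$), so it is torsion-free over $R$; finite type is inherited as a closed subscheme of $\cX$. Once these points are pinned down, the identity $\codim_\cX \cZ = \codim_\cX \cZ' + \codim_{\cZ'}\cZ$ follows by arithmetic in every case, so I would present the proof as: reduce to the three cases, record the relevant applications of Lemma~\ref{lem:nagata-easy1} and fiberwise additivity in each, and subtract.
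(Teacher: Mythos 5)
Your proposal is correct and follows essentially the same route as the paper: a case split on whether $\cZ$ and $\cZ'$ lie in the special fiber or meet the generic fiber, with the generic-fiber case handled by classical dimension theory over $K$, and the mixed and special-fiber cases reduced to Lemma~\ref{lem:nagata-easy1} together with the equality of fiber dimensions coming from flatness. The paper phrases the mixed case directly in terms of $\dim\cX_K=\dim\cX_k$ and $\dim\cZ'_K=\dim\cZ'_k$ rather than via additivity inside $\cX_k$, but the arithmetic is identical (one only needs to note, as you implicitly do, that $\cZ'_k$ is pure-dimensional so the fiberwise codimension counts are unambiguous).
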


\begin{proof} 
If $\cZ$ meets the generic fiber of $\cX$ then this is a classical formula for codimension of varieties over $K$.  Suppose $\cZ$ is contained in the special fiber.  If $\cZ'$ meets the generic fiber then $\codim_\cX \cZ' = \dim \cX_K - \dim \cZ'_K$, and $\codim_{\cZ'} \cZ = \dim \cZ'_k - \dim \cZ + 1$, by Lemma~\ref{lem:nagata-easy1}.  Now $\dim \cX_K$ and $\dim \cZ'_K$ are equal to $\dim \cX_k$ and $\dim \cZ'_k$, respectively, so adding these two equations gives
\[
\codim_\cX \cZ' + \codim_{\cZ'} \cZ = \dim \cX_k - \dim \cZ + 1, 
\]
and Lemma~\ref{lem:nagata-easy1} says that the right hand side is equal to $\codim_\cX Z$.  The proof in the case where $\cZ'$ is contained in the special fiber is similar.
\end{proof}

We now prove the principal ideal theorem over valuation rings of rank 1.

\begin{proof}[Proof of Theorem~\ref{thm:val-pit}]  
We may assume that $\cX$ is integral, and since the statement is local about the generic points of $\cZ$, we may assume that $\cZ$ is irreducible.  If $\cX$ is supported in the special fiber, the theorem reduces to the classical principal ideal theorem over $k$.  Otherwise $\cX$ is dominant, and hence flat, over $S$.  If $\cZ$ meets the generic fiber $\cX_K$, then the theorem follows from the classical principal ideal theorem for $\cX_K$. On the other hand, if $\cZ$ is contained in the special fiber, then it must be a union of components of the special fiber, by Proposition~\ref{prop:krull}, and hence has codimension 1 in $X$, by Lemma~\ref{lem:nagata-easy1}.
\end{proof}

We apply the principal ideal theorem to prove the following result on lifting closed points in the special fiber to closed points in the generic fiber.  In fact, we prove a more general result, for points of arbitrary transcendence degree.  Our argument is in the spirit of Katz's proof of \cite[Lemma~4.15]{Katz09}.

\begin{thm}\label{thm:closed-pts}
Let $\cX$ be an irreducible scheme locally of finite type over $S$, and let $x$ be a closed point of the special fiber $\cX_k$.  If $x$ is in the closure of $\cX_K$ then the set of closed points $x'$ in $\cX_K$ specializing to $x$ is Zariski dense in $\cX_K$. More generally, if $x$ is not necessarily closed in $\cX_k$, and $k(x)$ denotes the residue field of $x$, we can choose $x'$ to satisfy the identity 
$$\trdeg k(x')/K = \trdeg k(x)/k,$$
and again the choices of $x'$ are Zariski dense in $\cX_K$.
\end{thm}

\begin{proof}
Note that the statement for closed points is a special case of the statement involving transcendence degrees.  We may assume $\cX$ is integral and in particular flat over $S$, and also affine.

By hypothesis, $x$ is a specialization of the generic point of $\cX$.  Observe that given $x' \in \cX_K$ specializing to $x$, the closure of $x'$ is flat over $S$, so we obtain the inequality $\trdeg k(x')/K \geq \trdeg k(x)/k$.  Now, if $\dim \cX_K = \trdeg k(x)/k$, then we may take $x'$ to be the generic point of $\cX_K$.  We thus proceed by induction on $d:=\dim \cX_K-\trdeg k(x)/k$.

Suppose now $d>0$, and note that this implies that $x$ is not the generic point of any component of $\cX_k$.   Let $W$ be a closed subset properly contained in $\cX_K$; by affineness, we can choose a regular function $f$ on $\cX$ that vanishes at $x$, but not on any component of $W \cup \cX_k$. Then let $\cD \subset \cX$ be the principal subscheme cut out by $f$.  Passing to a smaller neighborhood of $x$, if necessary, we may assume every component of $\cD$ contains $x$.  By Proposition~\ref{prop:krull}, some component $\cZ$ of $\cD$ meets the generic fiber, and hence $\dim \cZ_K - \trdeg k(x)/k = d-1$.  By induction, $\cZ_K$ contains a point $x'$ specializing to $x$, not contained in $W$, and with $\trdeg k(x')/K=\trdeg k(x)/k$.  We conclude the desired statement.
%
%
\end{proof}

\begin{rem} \label{rem:second gap}
For the initial degeneration of a closed subscheme of a torus $T$ over $K$ associated to a weight vector $w \in N_G$, in the special case where $x$ is closed, Theorem~\ref{thm:closed-pts} says that every point in $X_w(k)$ lifts to a point in $X(K)$, and the set of such lifts is Zariski dense.  In particular, the theorem gives an algebraic proof of surjectivity of tropicalization, along the lines suggested by Speyer and Sturmfels in \cite{SpeyerSturmfels04}, as well as a proof of the density of tropical fibers stated as Theorem~4.1 and Corollary~4.2 in \cite{tropicalfibers}.  A gap in the proof of the former paper is explained in a footnote on the first page of the latter.  The proof of Theorem~4.1 in the latter also contains a serious error, discovered by W. Buczynska and F. Sottile, which is explained and corrected in \cite{TropicalFibers-Correction}.
\end{rem}

\begin{rem}
When applied to non-closed points, Theorem~\ref{thm:closed-pts} says that every closed subvariety of $X_w$ is a component of the special fiber of the closure in $\cX^w$ of some closed subvariety of $X$.  So, at least in this weak sense, every curve in $X_w$ lifts to a curve in $X$, every surface in $X_w$ lifts to a surface in $X$, and so on.
\end{rem}

\subsection{Proofs of subadditivity and lifting}  \label{sec:subadd}

We now use the principal ideal theorem to prove subaddivity of codimension under intersection and deduce a lifting theorem for proper intersections, in smooth schemes over a valuation ring of rank 1. Our proof follows the classical argument for smooth varieties, which is simpler than Serre's proof for regular schemes.

\begin{rem}Codimension is not subadditive under intersection in smooth schemes over valuation rings of rank greater than 1, as shown by the following example.  Nevertheless, this failure of subadditivity can be understood and controlled, and proper intersections in special fibers still lift to more general fibers over arbitrary valuation rings and, more generally, for flat families of subschemes in a smooth scheme over an arbitrary base.  See Appendix~\ref{app:closed-pts}.
\end{rem}

\begin{ex} \label{ex:not subadd}
Let $A$ be a valuation ring of finite rank $r$ greater than 1, and let $a$ be an element of $A$ that generates an ideal of height $r$.  Then $A[x]$ has Krull dimension $r+1$, by \cite[Theorem~2]{Nagata66}.   The ideals of $A[x]$ generated by $x$ and $x-a$ each have height $1$, but the ideal $(x,x-a)=(x,a)$ has height $r+1$, which is greater than 2.
\end{ex}

Our proof of Theorem \ref{thm:int reg} involves a reduction to the diagonal and requires the following lemma on dimensions of fiber products over $S$.

\begin{lem}\label{lem:prod-cod} 
Let $\cY$ be irreducible and finite type over $S$, and let $\cX$ and $\cX'$ be irreducible closed subschemes.  Then for every irreducible component $\cZ$ of $\cX \times_S \cX'$, 
\[
\codim_{\cY \times_S \cY} \cZ \leq \codim_\cY \cX + \codim_\cY \cX'.
\]
Furthermore, the inequality is strict if and only if $\cY$ is dominant over $S$ and $\cX$ and $\cX'$ are both contained in the special fiber.
\end{lem}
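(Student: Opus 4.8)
The plan is a case analysis according to whether each of $\cX$, $\cX'$, and $\cY$ is \emph{horizontal} over $S$ (dominant, equivalently not contained in the special fiber) or \emph{vertical} (contained in the special fiber). Two observations will organize everything. First, a horizontal irreducible subscheme is flat over $S$, and if $\cX$ is horizontal then so is $\cY \supseteq \cX$; moreover a horizontal subscheme that misses the special fiber is a variety over $K$, and in the cases where this actually occurs the relevant product $\cX \times_S \cX'$ is empty or lives entirely over $K$, so the statement reduces to classical subadditivity over $K$ with nothing new to prove. Second, $\cX \times_S \cX' = \cX_k \times_k \cX'$ whenever $\cX'$ is vertical (using $k \otimes_R k = k$), while in general the generic and special fibers of $\cX \times_S \cX'$ are $\cX_K \times_K \cX'_K$ and $\cX_k \times_k \cX'_k$; combined with additivity of dimension and preservation of equidimensionality for products over a field, this pins down $\dim \cZ$ in every case.

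For the codimension bookkeeping I will use that, for irreducible closed $\cZ \subseteq \cW$ with $\cW$ irreducible and finite type over $S$, one has $\codim_\cW \cZ = \codim_{\cW_K} \cZ_K$ when $\cZ$ is horizontal --- this is the ``meets the generic fiber'' part of Lemma~\ref{lem:nagata-easy}, obtained by passing a maximal chain to the generic fiber and taking closures back --- and $\codim_\cW \cZ = \codim_{\cW_k}\cZ + 1$ when $\cZ$ is vertical and $\cW$ is horizontal, by Lemma~\ref{lem:nagata-easy1}. Then I would fix a component $\cZ$ of $\cX \times_S \cX'$ and argue as follows. If $\cX$ and $\cX'$ are both horizontal, $\cX \times_S \cX'$ is flat over $S$, so $\cZ$ is horizontal and its generic point is a generic point of $\cX_K \times_K \cX'_K$; hence $\cZ_K$ is a component of that product, and the asserted equality reduces to the corresponding additivity of codimension for $\cX_K \times_K \cX'_K$ inside $\cY_K \times_K \cY_K$, which holds over the field $K$. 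If exactly one of them, say $\cX'$, is vertical (and the product is nonempty, so $\cX$ meets the special fiber), then $\cZ \subseteq \cX_k \times_k \cX'$ has dimension $\dim \cX + \dim \cX' - 1$ by flatness of $\cX$, and applying Lemma~\ref{lem:nagata-easy1} to $\cY \times_S \cY$ and to $\cY \supseteq \cX'$ shows both sides equal $2\dim\cY - \dim\cX - \dim\cX'$. If $\cX$ and $\cX'$ are both vertical, then $\cZ \subseteq \cX \times_k \cX'$ has dimension $\dim\cX + \dim\cX'$, and one gets equality when $\cY$ is vertical but $\codim_{\cY \times_S \cY}\cZ = \codim_\cY\cX + \codim_\cY\cX' - 1$ when $\cY$ is horizontal; this is precisely the exceptional case in the statement.

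The hard part, I expect, is exactly this last asymmetry: both $\cX$ and $\cX'$ get absorbed into the special fiber of $\cY \times_S \cY$, so their fiber product drops two dimensions relative to the ambient special fiber, whereas Lemma~\ref{lem:nagata-easy1} applied to $\cY \times_S \cY$ restores only one. Keeping this off-by-one straight across the cases, and correctly disposing of the edge cases where a horizontal subscheme does not meet the special fiber, is the main bookkeeping hurdle; the remaining content is a routine dimension count parallel to the classical identity $\codim(\cX \times \cX') = \codim \cX + \codim \cX'$ over a field.
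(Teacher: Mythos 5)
Your proof is correct and follows essentially the same route as the paper's: a case analysis on which of $\cX$, $\cX'$, $\cY$ dominate $S$, with the all-horizontal and all-vertical cases reduced to classical dimension theory over $K$ and $k$ respectively, and the mixed cases handled by the dimension counts of Lemma~\ref{lem:nagata-easy1}. In particular you identify the same source of the unique off-by-one failure of equality (both subschemes vertical inside a dominant $\cY$) that the paper does, so there is nothing substantive to add.
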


\begin{proof}
If $\cX$ and $\cX'$ both meet the generic fiber then they are flat over $S$.  In this case, $\cZ$ must also meet the generic fiber, and the proposition holds with equality, by classical dimension theory over $K$.  Similarly, if $\cY$ is not dominant over $S$, then the proposition holds with equality by dimension theory over $k$.

Suppose $\cY$ is dominant and $\cX$ is contained in the special fiber.  Then $\cZ$ is also contained in the special fiber.  If, furthermore, $\cX'$ is contained in the special fiber, then $\cZ$ has dimension $\dim \cX + \dim \cX'$.  Therefore, by Lemma~\ref{lem:nagata-easy1},
\[
\codim_{\cY \times_S \cY} \cZ = 2 \dim \cY_K - \dim \cX - \dim \cX' + 1.
\]
The right hand side is equal to $\codim_\cY \cX + \codim_\cY \cX' -1$, so the inequality is strict by exactly one in this case.  A similar argument shows that the proposition holds with equality if $\cX'$ meets the generic fiber.
\end{proof}

We will prove subadditivity of codimension using the previous proposition and a reduction to the diagonal.
 
\begin{lem}\label{lem:smooth-lci}
If $\cY$ is smooth over $S$ then the diagonal $\Delta$ in $\cY \times_S \cY$ is a local complete intersection subscheme.
\end{lem}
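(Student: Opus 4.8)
The plan is to check the assertion Zariski-locally on $\cY$ by comparing with the model case of affine space over $S$. Write $n$ for the relative dimension of $\cY$ over $S$, which we may take to be constant after passing to a connected component, and fix a point $y$ of $\cY$. Since $\cY$ is smooth over $S$, and the \'etale-local structure theory of smooth morphisms requires no noetherian hypotheses, there is an affine open neighborhood $\cU$ of $y$ together with an \'etale morphism $f \colon \cU \to \AA^n_S$. I would then show that on a neighborhood of the image of the diagonal in $\cU \times_S \cU$, the diagonal $\Delta_{\cU/S}$ is cut out by a regular sequence of $n$ elements; letting $y$ vary over $\cY$ then gives the lemma.

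For the local computation, recall first that the diagonal of $\AA^n_S$ inside $\AA^n_S \times_S \AA^n_S$ is the zero locus of the $n$ coordinate differences $x_i \otimes 1 - 1 \otimes x_i$, which manifestly form a regular sequence. Now $\cU \times_{\AA^n_S} \cU$ is the base change of this diagonal along the \'etale, hence flat, morphism $f \times f \colon \cU \times_S \cU \to \AA^n_S \times_S \AA^n_S$, so it is the closed subscheme of $\cU \times_S \cU$ defined by the pulled-back sequence $f^* x_i \otimes 1 - 1 \otimes f^* x_i$, $i = 1, \dots, n$. Because $f$ is \'etale it is unramified, so the relative diagonal $\cU \to \cU \times_{\AA^n_S} \cU$ is an open immersion; hence $\Delta_{\cU/S}$ is realized as an open subscheme of $\cU \times_{\AA^n_S} \cU$, and on the corresponding open subset of $\cU \times_S \cU$ the diagonal is precisely the vanishing locus of those $n$ elements. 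What remains is to know that they still form a regular sequence after the flat base change $f \times f$; this is the one point I would treat carefully in the non-noetherian setting, and it follows from the fact that flat base change preserves acyclicity of Koszul complexes.

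Assembling the local pictures over all $y$ shows that every point of the diagonal $\Delta$ in $\cY \times_S \cY$ has a neighborhood in which $\Delta$ is cut out by a regular sequence of $n$ elements, which is the assertion that $\Delta$ is a local complete intersection subscheme. I do not anticipate a serious obstacle here: the argument is valid over an arbitrary base, its only ingredients being the \'etale-local structure of smooth morphisms, flatness of \'etale maps, and openness of the diagonal of an unramified morphism. If one prefers to sidestep the Koszul bookkeeping, an alternative is the abstract route: $\Delta$ is an immersion, its composite with the smooth first projection $\operatorname{pr}_1 \colon \cY \times_S \cY \to \cY$ is the identity of $\cY$ and hence smooth, and $\operatorname{pr}_1$ itself is smooth, so $\Delta$ is automatically a regular immersion, an immersion whose composite with a smooth morphism is smooth being regular.
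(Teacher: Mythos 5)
Your argument is correct, and it takes a genuinely different route from the paper's, which is much shorter: the paper first observes that the diagonal is locally of finite presentation (citing EGA IV, Cor.~1.4.3.1) and then invokes the general fact that a locally finitely presented immersion of a smooth $S$-scheme into a smooth $S$-scheme is a regular immersion (Proposition~7 of Section~2 of the N\'eron models book of Bosch--L\"utkebohmert--Raynaud). Your closing ``abstract alternative'' is essentially that citation, except that the cited proposition carries a local-finite-presentation hypothesis on the immersion, which is exactly the point the paper checks first and which you would also need to verify. The genuinely new content in your proposal is therefore the explicit chart computation: pass to an \'etale map $f:\cU \to \AA^n_S$, pull back the coordinate differences cutting out $\Delta_{\AA^n_S/S}$, and use that $\Delta_{\cU/\AA^n_S}$ is an open immersion because $f$ is unramified. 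All the ingredients (\'etale charts for smooth morphisms, flatness of \'etale maps, openness of the diagonal of an unramified morphism) hold without noetherian hypotheses, so this goes through. Two small remarks. First, what flat base change literally preserves is Koszul-regularity of the sequence, which over a non-noetherian ring is a priori weaker than being a regular sequence in the classical sense; but the paper's definition of local complete intersection asks only that the number of local generators of $\cI_\Delta$ equal the codimension, so exhibiting $n$ generators suffices and the regular-sequence bookkeeping is not actually needed. Second, for that definition you should also note that $\codim_{\cY \times_S \cY} \Delta$ is indeed $n$; over $S = \Spec R$ this follows from flatness and the fiber-dimension formulas of Section~4. In exchange for being longer, your argument is self-contained and avoids the external citation; the paper's buys brevity.
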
 

\noindent Here, by local complete intersection we mean that the number of local generators for $\cI_\Delta$ is equal to the codimension.
 
\begin{proof} 
First note that the diagonal morphism is locally of finite presentation, by \cite[Corollary~1.4.3.1]{EGA4.1}.  The lemma follows, since a locally finite presentation immersion of a smooth scheme in another smooth scheme over an arbitrary base is a local complete intersection.  See Proposition~7 of Section~2.2 in \cite{BLR90}.
\end{proof}

We now proceed with the proof of subadditivity of codimension and lifting of proper intersections in the special fiber, over valuation rings of rank 1.

\begin{proof}[Proof of Theorem \ref{thm:int reg}]
The intersection $\cX \cap \cX'$ can be realized as the intersection of $\cX \times_S \cX'$ with the diagonal in $\cY \times_S \cY'$.  Now, the codimension of any component $\cZ$ of $\cX \times_S \cX'$ is at most
\[
\codim _{\cY \times_S \cY} \cZ \leq \codim _\cY \cX + \codim_\cY \cX',
\]
by Lemma~\ref{lem:prod-cod}.  Then the principal ideal theorem (Theorem~\ref{thm:val-pit}) and Lemma~\ref{lem:nagata-easy} together imply that codimension can only decrease when intersecting with a local complete intersection subscheme of $\cY \times_S \cY$.  Lemma~\ref{lem:smooth-lci} says that the diagonal is a local complete intersection, and the theorem follows.
\end{proof}

\begin{proof}[Proof of Theorem \ref{thm:lift to K}]
Let $\cZ$ be a component of $\cX \cap \cX'$ that contains $x$.  By Theorem~\ref{thm:int reg}, 
\[ 
\codim_\cY \cZ \leq \codim_\cY \cX + \codim_\cY \cX'.
\]
The codimension of the special fiber $\cZ_k$ in $\cY$ is equal to $\codim_\cY \cX + \codim_\cY \cX' + 1$, by Lemma~\ref{lem:nagata-easy1}, so $\cZ$ must meet the generic fiber.  Therefore, the generic point of $\cZ_K$ is a point of $\cX_K \cap \cX'_K$ specializing to $x$.  If $x$ is closed in its fiber, we can then find a closed point of $\cZ_K$ specializing to $x$, by Theorem~\ref{thm:closed-pts}, and similarly for the assertion on transcendence degrees.
\end{proof}

\subsection{Intersection multiplicities over valuation rings of rank 1}\label{subsec:tor}

The principle of continuity says that intersection numbers are constant when cycles vary in flat families.  See \cite[Section~10.2]{IT} for a precise statement and proof when the base is a smooth variety.  For applications to tropical lifting with multiplicities, we need to apply a principle of continuity over the spectrum $S$ of a possibly non-noetherian valuation ring of rank 1.  Lacking a suitable reference, we include a proof.
 
\begin{defn} Suppose $Y$ is smooth over a field $k$, and
let $X$ and $X'$ be closed subschemes of $Y$ whose intersection $X \cap X'$ is finite. 
Then the \textbf{intersection number}
$i(X \cdot X'; Y)$ of $X$ and $X'$ in $Y$ is
$$\sum_{i=0}^{\dim Y} (-1)^i
\dim_k \Tor^i_{\cO_{Y}}(\cO_{X},\cO_{X'}).$$
\end{defn}
 
\noindent In other words, the intersection number is the sum of the local intersection multiplicities at the finitely many points of $X \cap X'$, weighted by degrees of extension fields,
\[ 
i(X \cdot X';Y)=  \sum_{P \in X \cap X'} [k(P):k] i(P,X \cdot X';Y).
\]
While we are primarily interested in the case where $X$ and $X'$ have complementary dimension, so the finiteness of $X \cap X'$ means that $X$ and $X'$ meet properly in $Y$, this hypothesis is not technically necessary, since the local intersection multiplicities vanish when the intersection is not proper \cite[Theorem~V.C.1]{Serre65}.
 
\begin{thm}\label{thm:int-mults} Let $\cY$ be smooth and quasiprojective over $S$, and let $\cX$ and $\cX'$ be closed subschemes of $\cY$ that are flat over $S$. If $\cX \cap \cX'$ is finite over $S$ then 
\[
i(\cX_K \cdot \cX'_K; \cY_K)=i(\cX_k \cdot \cX'_k; \cY_k).
\]
\end{thm}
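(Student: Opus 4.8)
The plan is to express both intersection numbers as Euler characteristics of the derived reductions to the two fibers of a single $R$-flat complex, and then to extract the equality from the structure of finitely presented modules over a valuation ring.

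\emph{Reduction to the affine case.} The sheaves $\Tor^i_{\cO_\cY}(\cO_\cX,\cO_{\cX'})$ are supported on $\cZ := \cX \cap \cX'$, which is finite over the local base $S$; hence all its closed points lie in a common affine open $\cU = \Spec A$ of the quasiprojective $S$-scheme $\cY$, and since every point of $\cZ$ specializes into $\cZ_k$ we in fact have $\cZ \subseteq \cU$. Thus we may assume $\cY = \Spec A$ with $A$ smooth over $R$, $\cX = \Spec A/I$, $\cX' = \Spec A/I'$. Since $\cX$ and $\cX'$ are flat and of finite type over $S$ they are of finite presentation over $S$, so $I$ and $I'$ are finitely generated, $A/I$ and $A/I'$ are flat over $R$, and $B := A/(I+I')$ is a finite $R$-algebra of finite presentation, hence finitely presented as an $R$-module. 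Also $R$, being a valuation ring, is coherent, and therefore so are the finitely presented $R$-algebras $A$, $A/I$, $A/I'$.

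\emph{The complex.} Resolve $A/I$ by a complex $F_\bullet$ of finite free $A$-modules (possible by coherence of $A$) and set $C_\bullet := F_\bullet \otimes_A (A/I')$, a bounded-below complex of $R$-flat modules, with $H_i(C_\bullet) = \Tor^A_i(A/I,A/I') =: M_i$. Each $M_i$ is a finitely presented $R$-module: it is finitely presented over $B$ (coherent over the coherent ring $A/I'$ and killed by $I+I'$), and $B$ is finitely presented over $R$. Using that $A/I$ and the $F_j$ are $R$-flat one checks that $F_\bullet \otimes_R K$ and $F_\bullet \otimes_R k$ are free resolutions of $A_K/I_K$ over $A_K$ and of $A_k/I_k$ over $A_k$; hence by flat base change $M_i \otimes_R K = \Tor^{A_K}_i(A_K/I_K, A_K/I'_K)$, which vanishes once $i > \dim \cY_K$ since $A_K$ is regular. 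Since $M_i \otimes_R k$ occurs as the surviving term $E^2_{0,i}$ of the hyper-Tor spectral sequence computing $\Tor^{A_k}_\bullet(A_k/I_k,A_k/I'_k) = H_\bullet(C_\bullet\otimes_R k)$, it too vanishes for $i$ large, so by Nakayama $M_i = 0$ for $i \gg 0$ and all sums below are finite.

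\emph{The computation.} Since the Tor sheaves are supported in the affine $\cU$ and $F_\bullet \otimes_R K$, $F_\bullet \otimes_R k$ resolve the two fibers of $\cX$, we obtain
\[
i(\cX_K \cdot \cX'_K; \cY_K) = \chi(C_\bullet \otimes_R K), \qquad i(\cX_k \cdot \cX'_k; \cY_k) = \chi(C_\bullet \otimes_R k),
\]
where $\chi$ denotes the alternating sum of dimensions of homology. By the hyper-Tor spectral sequence, $\chi(C_\bullet \otimes_R F) = \sum_i (-1)^i \chi(M_i \otimes^{\mathbf L}_R F)$ for $F = K$ and $F = k$, so it suffices to prove $\chi(M_i \otimes^{\mathbf L}_R K) = \chi(M_i \otimes^{\mathbf L}_R k)$ for each $i$. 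The key point: a finitely presented module over a valuation ring $R$ admits a length-one free resolution $0 \to R^c \xrightarrow{\varphi} R^b \to M \to 0$, since a finitely generated submodule of a free module over a valuation domain is free. As $\varphi$ stays injective after $\otimes_R K$, and as $R$ has weak dimension one (so $\Tor^R_{\geq 2}(M,k) = 0$), both Euler characteristics equal $b - c$. Combining the equalities proves the theorem.

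\emph{Main obstacle.} The crux is the non-noetherian commutative algebra: showing that the Tor modules $M_i$ are finitely presented over $R$ and vanish for $i \gg 0$, which rests on coherence of $R$ and of smooth $R$-algebras, on the equivalence between finite presentation as a $B$-module and as a $B$-algebra, and on controlling base change to both fibers. The reduction to the affine case and the verification that $F_\bullet$ reduces to a resolution over each fiber are routine but should be carried out carefully. (Alternatively one could reduce, by noetherian approximation and domination of local rings, to the classical principle of continuity over a discrete valuation ring; the argument above has the advantage of being self-contained.)
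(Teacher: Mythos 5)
Your proof is correct and follows the same overall architecture as the paper's: restrict to an affine open containing the finite intersection, use coherence to resolve $\cO_\cX$ by finite free modules, use $R$-flatness of $\cX$ and $\cX'$ to see that the resolution and the resulting Tor complex base-change correctly to both fibers, and then extract both intersection numbers from finite free $R$-module data. The endgame is genuinely different, though: the paper pushes the Tor complex forward to $S$ and replaces it by a quasi-isomorphic bounded complex $\cM_\bullet$ of finite free $R$-modules (via EGA III, 0.11.9.3), reading off both Euler characteristics as $\sum_i (-1)^i \rk \cM_i$, whereas you apply the universal-coefficients spectral sequence --- which collapses to short exact sequences because $R$ has weak dimension one --- and resolve each finitely presented Tor module by a two-term free complex, using that finitely generated torsion-free modules over a valuation domain are free. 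Your route is more self-contained (no appeal to the EGA finiteness machinery) and yields the vanishing of the higher Tor modules by Nakayama rather than by truncating $\cM_\bullet$; the paper's route packages the whole computation into a single perfect complex. One caveat: your inference that coherence of $R$ implies coherence of the finitely presented $R$-algebras $A$, $A/I$, $A/I'$ is not valid as a general principle (coherence does not ascend to polynomial rings over arbitrary coherent rings, by Soublin's examples). The conclusion is nevertheless true for valuation rings and is exactly Proposition~\ref{prop:coherence} of the paper, proved there using that flatness over $R$ coincides with torsion-freeness; you should invoke or reprove that statement rather than deduce it from coherence of $R$ alone.
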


\noindent In other words, intersection numbers are invariant under specialization over $S$.  We now give a direct proof of this equality, by first establishing coherence properties for the structures sheaves of these schemes and then using free resolutions to compute the $\Tor$ groups.  An alternative approach, using noetherian approximation and specialization properties of intersection theory over a noetherian base, is sketched in Remark~\ref{rem:noetherian-mults}.
 
In general, the structure sheaf of a non-noetherian scheme is not necessarily coherent.  The following proposition asserts that such pathologies do not occur on the schemes we are considering.  
 
\begin{prop}\label{prop:coherence} Let $\cX$ be locally of finite 
presentation over $S$.  Then $\cO_{\cX}$ is coherent.
\end{prop}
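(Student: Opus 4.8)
The plan is to reduce to an affine, algebraic statement and then invoke the standard coherence criterion over a coherent base ring. Recall that a ring $A$ is \emph{coherent} if every finitely generated ideal is finitely presented, equivalently every finitely presented $A$-module is coherent; and a scheme is coherent if it can be covered by spectra of coherent rings. The first step is to observe that $R$ itself is coherent: it is a valuation ring, hence a B\'ezout domain, so every finitely generated ideal is principal, and a principal ideal of a domain is free, hence finitely presented. (This is the one genuinely non-noetherian input, but it is elementary.)

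Next, since the statement is local on $\cX$, I would reduce to the affine case $\cX = \Spec B$ with $B = R[x_1,\dots,x_n]/I$ a finitely presented $R$-algebra, so $I$ is a finitely generated ideal of the polynomial ring $R[x_1,\dots,x_n]$. The key algebraic fact is that a polynomial ring (even in finitely many variables) over a coherent ring is coherent; this is a classical theorem, essentially due to the Hilbert-basis-style argument of Harris, and is recorded in standard references such as Glaz's book on coherent rings or the Stacks Project. Hence $R[x_1,\dots,x_n]$ is coherent. Then a quotient of a coherent ring by a finitely generated ideal is again coherent, so $B$ is coherent, which is exactly the assertion that $\cO_\cX$ is coherent on $\Spec B$. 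Gluing over an affine cover of $\cX$ finishes the proof.

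The one point requiring a little care is the passage from ``finitely presented over $S$'' to ``covered by spectra of finitely presented $R$-algebras of the form $R[x]/I$ with $I$ finitely generated'': finite presentation of a morphism means exactly that such a cover exists with each $I$ finitely generated, so this is immediate from the definition, but I would state it explicitly. I would also note that coherence of $\cO_\cX$ is equivalent to $\cO_\cX$ being a coherent $\cO_\cX$-module, which is what is really needed downstream (e.g.\ to ensure kernels and images of maps of coherent sheaves are coherent, and that $\Tor$ sheaves of coherent sheaves are coherent when computing intersection multiplicities).

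The main obstacle is simply assembling the right black box: the theorem that a finitely generated polynomial ring over a coherent ring is coherent. Its proof is not difficult but is not a one-liner either; it proceeds by an induction on the number of variables together with a syzygy-tracking argument analogous to the proof of the Hilbert basis theorem, exploiting that one only needs to control \emph{finitely many} syzygies at a time. Since this is standard, I would cite it rather than reproduce it, and the rest of the proof is routine bookkeeping with localization and gluing.
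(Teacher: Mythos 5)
Your overall strategy matches the paper's: reduce to the affine case, establish coherence of $R[t_1,\dots,t_n]$, and descend to a finitely presented quotient (the last step, that a quotient of a coherent ring by a finitely generated ideal is coherent, is fine). But the black box you lean on for the crucial middle step is false as stated. It is \emph{not} a theorem that a polynomial ring over a coherent ring is coherent: Soublin's classical counterexample (take $A$ to be a countably infinite product of copies of $\QQ[[s,t]]$; then $A$ is coherent but $A[x]$ is not) shows that coherence is not preserved by polynomial extension. This is precisely why the literature introduces the stronger notion of a \emph{stably coherent} ring, and why your appeal to ``a Hilbert-basis-style argument'' does not go through: the syzygy-tracking induction breaks down because a kernel computed over $A[x]$ need not be controlled by finitely many coherence conditions over $A$.

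The gap is repairable, because a valuation ring is much better than merely coherent: every finitely generated ideal is principal, hence free, so $R$ is a semihereditary (Pr\"ufer) domain, and polynomial rings in finitely many variables over semihereditary rings \emph{are} coherent (this is the correct citation, e.g.\ in Glaz's book on coherent rings). Alternatively, and this is what the paper does, you can avoid the black box entirely with a short direct argument: the image of a map $\cO_{\cX}^m \to \cO_{\cX}$ over an affine piece of $\Spec R[t_1,\dots,t_n]$ is a finitely generated ideal $I$, which is torsion-free and hence \emph{flat} over the valuation ring $R$; a flat, finite-type module over a domain is finitely presented by Raynaud--Gruson, so the kernel of the presentation $\cO_{\cX}(\cU)^m \to I$ is finitely generated. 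Either fix works, but as written your proof rests on a false general statement, so you must either strengthen the hypothesis you invoke (from ``coherent'' to ``valuation/Pr\"ufer/semihereditary'') or substitute the flatness argument.
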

 
\begin{proof} 
We first claim that $\cO_{\cX}$ is coherent in the special case where $\cX$ is the affine space $\Spec R[t_1,\dots,t_n]$.  Since the question is local, it is enough to show that for any affine subset $\cU$ of $\cX$, and any homomorphism $f:\cO_{\cX}^m|_{\cU} \to \cO_{\cX}|_{\cU}$, the kernel of $f$ over $\cU$ is finitely generated. Now, the image of $f$ is finitely generated, and is the quasicoherent ideal sheaf $\tilde{I}$ associated to some ideal $I$ in $\cO_{\cX}(\cU)$.  The ideal $I$ is torsion-free and hence flat over $S$, so it is finitely presented by \cite[Theorem 3.4.6]{RaynaudGruson71}.  Therefore, by \cite[Theorem~2.6]{Matsumura89}, the kernel of the given presentation $\cO_{\cX}(\cU)^m \to I$ is finitely generated, which proves the claim.
 
For the general case,  since coherence is local, we may again assume that $\cX$ is affine, so $\cX=\Spec A[t_1,\dots,t_n]/I$ for some finitely generated ideal $I$.  The same argument as in the special case above shows that $I$ is finitely presented.  Therefore $\tilde{I}$ is coherent on $\Spec R[t_1,\dots,t_n]$, and hence $\cO_{\cX}$ is coherent \cite[Chapter~0, 5.3.10]{EGA1}.
\end{proof}
 
\begin{proof}[Proof of Theorem \ref{thm:int-mults}]
Since $\cY$ is quasiprojective and $\cX \cap \cX'$ is finite over $S$, there is an affine open subset of $\cY$ containing the intersection, and hence we may assume $\cY$ is affine.  Now $\cO_\cY$ is coherent, by Proposition~\ref{prop:coherence}, and $\cO_\cX$ and $\cO_{\cX'}$, being flat and finitely generated, are finitely presented and hence coherent as $\cO_\cY$-modules.  Let $\cP_\bullet$ be a resolution of $\cO_\cX$ by free $\cO_\cY$-modules of finite rank.  Then, since $\cO_\cX$ is flat over $S$, the restrictions $\cP_\bullet \otimes K$ and $\cP_\bullet \otimes k$ to the generic and special fibers remain exact, giving free resolutions of $\cO_{\cX_K}$ and $\cO_{\cX_k}$, respectively.  Therefore, not only does the homology of
\[
\cQ_\bullet = \cP_\bullet \otimes_{\cO_{\cY}} \cO_{\cX'}
\]
 compute $\Tor_\cY(\cX, \cX')$, but also the homology of the base changes $\cQ_\bullet \otimes K$ and $\cQ_\bullet \otimes k$ computes $\Tor_{\cY_K}(\cX_K, \cX'_K)$ and $\Tor_{\cY_k}(\cX_k, \cX'_k)$, respectively.

Now, $\Tor^i_\cY(\cX, \cX')$ is coherent, and supported on $\cX \cap \cX'$ \cite[6.5.1]{EGA3.2}.  It follows from \cite[Proposition 1.4.7]{EGA4.1} that the push forward of a quasicoherent, finitely presented module under a finite morphism of finite presentation is still quasicoherent and finitely presented.  Applying this to $\pi: \cX \cap \cX' \rightarrow S$ it follows that the push forward $\pi_* \Tor^i_\cY(\cX, \cX')$ is coherent, and since $\pi$ is affine, this push forward can be computed as the homology of the complex
\[
\cL_\bullet = \pi_* \cQ_\bullet.
\]
As above, since $\cO_\cX$ is flat over $S$, the homology of $\cL_\bullet \otimes K$ and $\cL_\bullet \otimes k$ compute $\Tor^i_{\cY_K}(\cX_K, \cX_K')$ and $\Tor^i_{\cY_k}(\cX_k, \cX_k')$, respectively.  Note also that since $\cO_{\cX'}$ is flat over $S$, the terms of $\cQ_\bullet$ and hence $\cL_\bullet$ are flat $\cO_S$-modules.  

Since the homology of $\cL_\bullet$ is coherent, there is a quasi-isomorphic bounded below complex $\cM_\bullet$ of free $\cO_S$-modules of finite rank, and since $\cL_\bullet$ is flat over $S$, the restrictions $\cM_\bullet \otimes K$ and $\cM_\bullet \otimes k$ are quasi-isomorphic to $\cL_\bullet \otimes K$ and $\cL_\bullet \otimes k$, respectively \cite[Chapter~0, Proposition~11.9.1; see also Remark~11.9.3]{EGA3.1}.  We claim that the homology of $\cM_\bullet$ vanishes in high degree: indeed, the homology of $\cM_{\bullet} \otimes k$ computes $\Tor$  on the smooth $\cY_k$, so vanishes in high degree, and the claim then follows from flatness of $\cM_{\bullet}$ and Nakayama's lemma.  Thus, we can truncate the complex, replacing some $\cM_i$ by the image of $\cM_{i+1}$, which is coherent and torsion-free and hence free of finite rank.  In particular, we may assume that $\cM_\bullet$ is also bounded above.  It is then clear that
\[
i(\cX_s \cdot \cX'_s, \cY_s) = \sum_i (-1)^i \rk \cM_i,
\]
for $s$ equal to either the generic or closed point of $S$, and the theorem follows.
\end{proof}

\begin{rem} \label{rem:noetherian-mults}
Theorem~\ref{thm:int-mults} can also be proved over an arbitrary base scheme as follows.  It is easy to see that intersection numbers are invariant under extension of the base field, so the result behaves well under base change.  Passing to the closure of the more general point, and then taking an affine neighborhood around the special point, we reduce to the case where the base is affine and irreducible.  This ensures that $X$ and $X'$ are finitely presented over the base $S$, because they are flat and finite type, and then we can proceed by noetherian approximation.  Say the affine base is $\Spec A$.  Then there is a finitely generated subalgebra $A_0$ of $A$ over which $X$, $X'$, and $Y$ and all of the relevant morphisms are defined, and all can be chosen so that the relevant geometric properties are preserved, including that the models of $X$ and $X'$ are flat \cite[Sections 11 and 12]{EGA4.3}, and the model of $Y$ is quasiprojective \cite[Theorem 8.10.5]{EGA4.3} and smooth \cite[Section~17.7]{EGA4.4}.  In particular, we may assume the base is noetherian.  By \cite[Proposition 7.1.9]{EGA2} we can thus reduce to the case of a DVR, and the theorem follows from the well-known fact that intersection numbers are preserved by specialization over a DVR \cite[Section~20.3]{IT}.
\end{rem}

In the geometric case that we are interested in, where the fraction field $K$ is algebraically closed and hence the residue field $k$ is algebraically closed as well, we apply Theorem~\ref{thm:int-mults} on intersection numbers to get the following result on individual intersection multiplicities along components of the intersection of the special fibers.  Note that this result is of a local nature, and holds for arbitrary expected dimension for the intersection.

\begin{thm}\label{thm:int-mults-local} 
Assume $K$ is algebraically closed.  Let $\cY$ be smooth over $S$, and let $\cX$ and $\cX'$ be closed subschemes that are flat over $S$.  Suppose $\cX_k$ and $\cX'_k$ meet properly along a component $Z$ of their intersection.
Then 
\[
i(Z,\cX_k \cdot \cX'_k;\cY_k)=\sum_{\widetilde{Z}} m(Z,\widetilde{Z}) \cdot 
i(\widetilde{Z},\cX_K \cdot \cX'_K; \cY_K),
\] 
where the sum is over irreducible components $\widetilde{Z}$ of $\cX_K \cap \cX'_K$
whose closures contain $Z$, and $m(Z,\widetilde{Z})$ is the multiplicity
of $Z$ in the special fiber of the closure of $\widetilde{Z}$ inside $\cY$.
\end{thm}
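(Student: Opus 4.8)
The plan is to reduce, by slicing with a general linear subspace, to the case $\dim Z = 0$. In that case the formula follows directly from the intersection-number statement Theorem~\ref{thm:int-mults}: after localizing so that $\cX \cap \cX'$ is finite over $S$, that theorem gives $i(\cX_K \cdot \cX'_K; \cY_K) = i(\cX_k \cdot \cX'_k; \cY_k)$, and since $k$ and $K$ are algebraically closed the two sides unwind to $\sum_i i(\widetilde Z_i, \cX_K \cdot \cX'_K; \cY_K)$ and $i(Z, \cX_k \cdot \cX'_k; \cY_k)$ respectively, while each $m(Z, \widetilde Z_i)$ equals $1$ because the closure in $\cY$ of a closed point of $\cY_K$ that specializes into the (finite) special fiber must be a reduced copy of $\Spec R$ — there are no rings strictly between the rank-one valuation ring $R$ and its fraction field $K$.

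First I would localize at the generic point $\eta$ of $Z$: replacing $\cY$ by an affine, hence quasiprojective, open neighborhood of $\eta$, I can arrange that $\cX_k \cap \cX'_k$ equals $Z$ as a set and that the components of $\cX_K \cap \cX'_K$ are precisely the $\widetilde Z_1, \dots, \widetilde Z_m$ whose closures in $\cY$ contain $Z$. Writing $c, c'$ for the codimensions of $\cX, \cX'$ in $\cY$, the properness hypothesis gives $\codim_{\cY_k} Z = c + c'$, while subadditivity of codimension in the smooth variety $\cY_K$ gives $\codim_{\cY_K} \widetilde Z_i \le c + c'$; since the closure $\overline{\widetilde Z_i}$ of $\widetilde Z_i$ in $\cY$ is flat over $S$ with special fiber contained in $Z$, this forces $\dim \widetilde Z_i = \dim Z =: d$, forces the special fiber of $\overline{\widetilde Z_i}$ to be all of $Z$ with multiplicity $m(Z, \widetilde Z_i)$ along it, and gives $\cX \cap \cX' = \bigcup_i \overline{\widetilde Z_i}$ as sets. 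In particular $\cX_k, \cX'_k$ and $\cX_K, \cX'_K$ now meet properly, so the refined cycles $\cX_k \cdot \cX'_k = i(Z, \cX_k \cdot \cX'_k; \cY_k)\,[Z]$ and $\cX_K \cdot \cX'_K = \sum_i i(\widetilde Z_i, \cX_K \cdot \cX'_K; \cY_K)\,[\widetilde Z_i]$ are defined.

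Next comes the slicing. Fixing a locally closed embedding $\cY \hookrightarrow \mathbb{P}^N_S$, I would choose a codimension-$d$ linear subspace $\Lambda \subset \mathbb{P}^N_S$, defined over $R$, whose two fibers $\Lambda_k$ and $\Lambda_K$ are both Zariski-general over $k$ and over $K$ respectively, for the finitely many subvarieties in sight: $\cY \cap \Lambda$ has smooth fibers; $Z \cap \Lambda$ is a reduced set of $\delta := \deg Z$ closed points; $\cX_k \cap \Lambda$ meets $\cX'_k \cap \Lambda$ properly, $\cX_K \cap \Lambda$ meets $\cX'_K \cap \Lambda$ properly, and each $\widetilde Z_i \cap \Lambda$ is a reduced set of $\deg \widetilde Z_i$ points. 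Such a $\Lambda$ exists because $k$ and $K$ are infinite: taking $B_k \subset \mathbb{G}_k$ to be the locus of bad $\Lambda_k$ and $B_K \subset \mathbb{G}$ to be the closure in the Grassmannian $\mathbb{G}$ over $S$ of the locus of bad $\Lambda_K$, the union $B_k \cup (B_K \cap \mathbb{G}_k)$ is a proper closed subset of $\mathbb{G}_k$; a closed point of its complement lifts to a section $\Lambda \in \mathbb{G}(R)$ with $\Lambda_k$ good, and $\Lambda_K$ is then automatically good, since otherwise the image of $S$ in $\mathbb{G}$ would lie in $B_K$. For such a $\Lambda$, general hyperplane sections restrict to nonzerodivisors, so $\cX \cap \Lambda$, $\cX' \cap \Lambda$ and each $\overline{\widetilde Z_i} \cap \Lambda$ are flat over $S$; $\cY \cap \Lambda$ is smooth and quasiprojective over $S$; and — this is the delicate point — the intersection $(\cX \cap \Lambda) \cap (\cX' \cap \Lambda) = (\cX \cap \cX') \cap \Lambda$, which is set-theoretically $\bigcup_i (\overline{\widetilde Z_i} \cap \Lambda)$, is finite over $S$. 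The last claim is where the rank-one hypothesis reenters: choosing $\Lambda$ generic over the residue field keeps the slice points of $\widetilde Z_i$ from escaping to infinity or failing to specialize, and then each such slice point has closure $\Spec R$ in $\cY \cap \Lambda$ by the absence of intermediate rings, so $\overline{\widetilde Z_i} \cap \Lambda$ is a reduced scheme, finite over $S$, with generic fiber the general section of $\widetilde Z_i$ and special fiber the general section of $Z$ taken with multiplicity $m(Z, \widetilde Z_i)$. Carrying this out carefully is the main obstacle of the proof.

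Finally, I would apply Theorem~\ref{thm:int-mults} to $\cX \cap \Lambda$ and $\cX' \cap \Lambda$ inside $\cY \cap \Lambda$, obtaining
\[
i\big((\cX \cap \Lambda)_K \cdot (\cX' \cap \Lambda)_K;\, (\cY \cap \Lambda)_K\big) \;=\; i\big((\cX \cap \Lambda)_k \cdot (\cX' \cap \Lambda)_k;\, (\cY \cap \Lambda)_k\big),
\]
and evaluate both sides. Since intersecting with a general linear space of codimension $d = \dim Z$ commutes with the refined intersection product, the cycle $(\cX_k \cap \Lambda) \cdot (\cX'_k \cap \Lambda)$ equals $(\cX_k \cdot \cX'_k) \cap \Lambda$, so each of the $\delta$ points of $Z \cap \Lambda$ appears there with multiplicity $i(Z, \cX_k \cdot \cX'_k; \cY_k)$; as $k$ is algebraically closed, the right-hand side equals $\delta \cdot i(Z, \cX_k \cdot \cX'_k; \cY_k)$. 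The same argument over $K$ gives $\sum_i (\deg \widetilde Z_i)\, i(\widetilde Z_i, \cX_K \cdot \cX'_K; \cY_K)$ for the left-hand side. Finally, $\overline{\widetilde Z_i} \cap \Lambda$ is finite and flat over $S$, hence free of some rank; its generic fiber consists of $\deg \widetilde Z_i$ reduced points, while its special fiber is $Z \cap \Lambda$ with multiplicity $m(Z, \widetilde Z_i)$, of length $\delta \cdot m(Z, \widetilde Z_i)$, so $\deg \widetilde Z_i = \delta \cdot m(Z, \widetilde Z_i)$. Substituting and dividing by the nonzero integer $\delta$ yields the asserted formula.
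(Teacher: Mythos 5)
Your overall strategy --- reduce to relative dimension zero by slicing with a general linear space defined over $R$, then apply Theorem~\ref{thm:int-mults} --- is the same as the paper's, and your zero-dimensional case (including the observation that $m(Z,\widetilde Z)=1$ there because $R$ has no overrings in $K$ other than $R$ and $K$) matches the paper. But the step you yourself flag as delicate is genuinely broken, in two linked places. First, no choice of $\Lambda$ makes $(\cX\cap\cX')\cap\Lambda$ finite over $S$: finiteness requires every point of $\widetilde Z_i\cap\Lambda_K$ to extend to a section of $\cY\cap\Lambda$ over $S$, and whether a given $K$-point of $\widetilde Z_i$ specializes into $\cY_k$ is a property of that point, not of $\Lambda$. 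Second, and equivalently, the identity $\deg\widetilde Z_i=\delta\cdot m(Z,\widetilde Z_i)$ is false: $\deg\widetilde Z_i$ is the degree of the special fiber of the closure of $\widetilde Z_i$ in $\mathbb{P}^N_R$, and that special fiber contains $m(Z,\widetilde Z_i)\cdot Z$ but in general has further components supported at infinity or on $\overline{\cY}\smallsetminus\cY$, which a general $\Lambda_k$ also meets. Concretely, let $\pi\in R$ have valuation $1$ and let $\widetilde Z\subset\mathbb{A}^2_K$ be the irreducible conic $\pi xy-x+\pi=0$. Its closure in $\mathbb{A}^2_R$ is flat over $R$ with special fiber the reduced line $Z=V(x)$, so $m(Z,\widetilde Z)=1$ and $\delta=1$ while $\deg\widetilde Z=2$; and for every line $\Lambda$ with unit coefficients over $R$, a Newton polygon computation on $b\pi y^2-(b+c\pi)y+(c-a\pi)$ shows that exactly one of the two points of $\widetilde Z\cap\Lambda_K$ has coordinates in $R$ (and specializes into $Z$), while the other has $\nu(y)=-1$ and escapes. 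So both the finiteness claim and the degree identity fail, and the final bookkeeping would equate $2$ with $1$.

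The repair is the further localization the paper performs \emph{after} slicing: shrink $\cY$ to a neighborhood $\cU$ of a single point $z\in Z\cap\Lambda_k$, removing the positive-dimensional loci and the finitely many (closed) points of $\cX_K\cap\cX'_K\cap\Lambda_K$ that do not specialize to $z$. Only then is the slice quasi-finite, separated, finitely presented, and proper (checked via the valuative criterion), hence finite over $S$, so that Theorem~\ref{thm:int-mults} applies. After this excision the relevant count on $\widetilde Z_i$ is no longer $\deg\widetilde Z_i$ but the number of surviving points, which one reads off from the finite flat $R$-algebra of $\overline{\widetilde Z_i}\cap\Lambda\cap\cU$: its special fiber has length $m(Z,\widetilde Z_i)$, which is exactly how the paper identifies $m(Z,\widetilde Z_i)$ with the number of points of $\widetilde Z_i\cap\cL_K$ specializing to $z$. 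With that substitution in place of your degree identity, the rest of your computation goes through.
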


\begin{proof}
We first prove the special case where $\cX$ and $\cX'$ have complementary dimension.  Suppose $Z$ is a point.  We claim that there is an affine open neighborhood $\cU$ of $Z$ such that $\cX \cap \cX' \cap \cU$ is finite over $S$ and has special fiber exactly $Z$.  By upper semicontinuity of fiber dimension \cite[Theorem~13.1.3]{EGA4.3}, the union $\cW_1$ of the positive dimensional components of $\cX_K \cap \cX'_K$ and $\cX_k \cap \cX'_k$ is closed in $\cY$.  The intersection of $\cX_K$ and $\cX'_K$ in the complement of $\cW_1$ is a finite set of $K$-points.  Let $\cW_2$ be the closure of those points in this set that do not specialize to $Z$.  We claim that any affine neighborhood of $Z$ contained in $\cY \smallsetminus( \cW_1 \cup \cW_2)$ is the desired neighborhood.  Now $\cX \cap \cX' \cap \cU$ is separated, quasifinite over $S$ and has special fiber $Z$, by construction, and it is locally of finite presentation, since $\cX$ and  $\cX'$ are locally of finite type and flat, and hence locally of finite presentation.
By \cite[Theorem~8.11.1]{EGA4.3}, to show that this intersection is finite it only remains to check that it is proper over $S$.  The generic fiber consists of finitely many $K$-points, each of which extends to an $R$-point, and one can check that this implies the valuative criterion for universal closedness \cite[Theorem~7.3.8]{EGA2}, which proves the claim.  Now, note that $m(\widetilde Z, Z)$ is 1 for every $\widetilde Z$, since each $\widetilde Z$ is a $K$-point whose closure is a section.  The required equality of intersection numbers then follows immediately from Theorem~\ref{thm:int-mults}.
 
We now prove the general case, where the intersection may have positive dimension, by reducing to the case of a zero-dimensional intersection.  Since the statement is local on $\cY$, we may assume $\cY$ is affine, that $Z$ is the only component of $\cX_k \cap \cX'_k$, and that every component of $\cX_K \cap \cX'_K$ contains $Z$ in its closure.  Let $\widetilde{Z}_1,\dots,\widetilde{Z}_m$ be 
the irreducible components of $\cX_K \cap \cX'_K$. Localizing further, we may also assume that $Z$ and the $\widetilde{Z}_i$ are all smooth (although we cannot assume that the closures of the $\widetilde{Z}_i$ are smooth, as $Z$ may appear with multiplicity greater than $1$). 

Now, let $\cL_k$ be a general linear subspace of complementary dimension in the special fiber, so $\cL_k$ meets $Z$ transversely at finitely many points.  Choose a general lift $\cL$ of $\cL_k$ to $S$; by the Zariski density of Theorem \ref{thm:closed-pts} applied to the Grassmannian over $S$, the generic fiber $\cL_K$ meets each component of $\cX_K \cap \cX'_K$ transversely, at isolated points.  After further localizing, we may assume $\cL_k$ meets $Z$ at a single point $z$, and every point $\widetilde z$ of $\cX_K \cap \cX'_K \cap \cL_K$ specializes to $z$.  Since $\cL$ is a complete intersection, applying \cite[Example~8.1.10]{IT} in the special fiber, we have
\[
i(Z,\cX_k \cdot \cX'_k;\cY_k)= i(z, (\cX_k \cap \cL_k) \cdot (\cX'_k \cap \cL_k); \cL_k).
\]
Now, note that an inductive application of \cite[Proposition~11.3.7]{EGA4.3} implies that $\cX \cap \cL$ and $\cX' \cap \cL$ remain flat over $S$ so,
by the zero-dimensional case treated above, the right hand side is equal to
\[
\sum_{\widetilde z} i(\widetilde z, (\cX_K \cap \cL_K) \cdot (\cX'_K \cap \cL_K); \cL_K).
\]
If $\widetilde Z_i$ is the component of $\cX_K \cap \cX'_K$ containing $\widetilde z$, then a similar argument in the generic fiber gives
\[
i(\widetilde z, (\cX_K \cap \cL_K) \cdot (\cX'_K \cap \cL_K); \cL_K) = i(\widetilde Z_i,\cX_K \cdot \cX'_K;\cY_K).
\]
The theorem follows since, by the zero-dimensional case above, the multiplicity $m(\widetilde Z_i, Z)$ is the number of points $\widetilde z$ in $\widetilde Z_i \cap \cL_K$ specializing to $z$.
\end{proof}

\begin{cor} \label{cor:cycles}
Let $X$ be a pure-dimensional closed subscheme of $T$. Then $\Trop(X)$ is equal to the tropicalization of the fundamental cycle $\Trop([X])$.
\end{cor}

\begin{proof}
We may assume the valuation is nontrivial.  It is clear that the tropicalizations agree set-theoretically, since both are the closure of the image of $X(K)$.  To see that the multiplicities agree, let $w$ be a point in the relative interior of a facet of $\Trop(X)$, and apply Theorem~\ref{thm:int-mults-local} in the special case where $\cY$ and $\cX'$ are both equal to $\cT^w$.
\end{proof}

\section{Lifting tropical intersections with multiplicities} \label{sec:main}

We now use the results of Sections~\ref{sec:trivial val} and \ref{sec:dim} to prove the main tropical lifting theorems, both as stated in the introduction and in refined forms with multiplicities.  We also prove generalizations to intersections of three or more subschemes and discuss weaker lifting results when the tropicalizations do not meet properly.

\subsection{Intersections of two subschemes}

Let $Y$ be a closed subvariety of $T$, and let $X$ and $X'$ be closed subschemes of pure codimension $j$ and $j'$, respectively, in $Y$.

\begin{proof}[Proof of Theorem~\ref{thm:lift-to-degens}]
Suppose $\Trop(X)$ meets $\Trop(X')$ properly at a simple point $w$ of $\Trop(Y)$.  Let $\widetilde K | K$ be an extension of valued fields such that $w \in N_{\widetilde G}$, where $\widetilde G$ is the value group of $G$.  Then the tropicalizations after base change $\Trop(\widetilde X)$ and $\Trop(\widetilde X')$ meet properly at $w$, and Theorem~\ref{thm:initial base change} implies furthermore that $X_w$ meets $X'_w$ properly at a smooth point of $Y_w$ if and only if $\widetilde X_w$ meets $\widetilde X'_w$ properly at a smooth point of $\widetilde Y_w$.  Therefore, after an extension of valued fields, we may assume $w \in N_G$. 

The initial degeneration $Y_w$ is a torus torsor containing $X_w$ and $X'_w$, and the tropicalizations $\Trop(X_w)$ and $\Trop(X'_w)$ with respect to the trivial valuation are the stars of $w$ in $\Trop(X)$ and $\Trop(X')$, respectively, and hence meet properly in the vector space $\Trop(Y_w)$.  Theorem~\ref{thm:trivial val} then says that 
\[
\Trop(X_w \cap X'_w) = \Trop(X_w) \cap \Trop(X'_w).
\]
In particular, $X_w \cap X'_w$ is nonempty of codimension $j + j'$, and the theorem follows, since $Y_w$ is smooth.
\end{proof}

\begin{proof}[Proof of Theorem \ref{exploded main}] 
Suppose $X_w$ meets $X'_w$ properly at a smooth point $x$ of $Y_w$.  After extending the valued field we may assume the valuation is nontrivial and $w \in N_G$.  In this case, the integral model $\cY_w$ is of finite type over $\Spec R$, by Proposition~\ref{prop:finite type}.  We can then pass to an open neighborhood of $x$ in $\cY_w$ in which the special fiber is smooth and apply Theorem~\ref{thm:lift to K} to deduce that there is a point of $X \cap X'$ specializing to $x$.  Therefore $x$ is contained in $(X \cap X')_w$.
\end{proof}

As noted in the introduction, Theorem \ref{main} follows immediately from Theorems~\ref{thm:lift-to-degens} and \ref{thm:lift to K}, and Theorem~\ref{basic} is the special case where $Y$ is the torus $T$.  So this concludes the proof of the theorems stated in the introduction.  

\smallskip

We now state and prove a refined version of Theorem~\ref{main} with multiplicities.  If $\Trop(X)$ and $\Trop(X')$ meet properly at a simple point $w$ of $\Trop(Y)$, and $\tau$ is a facet of $\Trop(X) \cap \Trop(X')$ containing $w$, then the tropical intersection multiplicity $i(\tau, \Trop(X) \cdot \Trop(X'); \Trop(Y))$  along $\tau$ in $\Trop(Y)$ is defined by a local displacement rule, as in Section~\ref{sec:stable}, but the displacement vector is required to be in the subspace of $N_\RR$ parallel to the affine span of the facet of $\Trop(Y)$ containing $w$.  Then $i(\tau, \Trop(X) \cdot \Trop(X'); \Trop(Y))$ is the multiplicity of $\tau_w$ in the stable tropical intersection of the stars of $w$ in $\Trop(X)$ and $\Trop(X')$, inside the star of $w$ in $\Trop(Y)$.

\begin{thm}\label{thm:main plus multiplicities}
Suppose $\Trop(X)$ meets $\Trop(X')$ properly along a face $\tau$ of codimension $j + j'$ that contains a simple point of $\Trop(Y)$.
Then $\tau$ is contained in $\Trop(X \cap X')$ with multiplicity bounded below by the tropical intersection multiplicity
\[
m_{X \cap X'}(\tau) \geq i(\tau,\Trop(X) \cdot \Trop(X');\Trop(Y)),
\]
and both are strictly positive.  Furthermore, the tropical intersection multiplicity is equal to the weighted sum of algebraic intersection multiplicities
\[
i(\tau,\Trop(X)\cdot \Trop(X');\Trop(Y)) = \sum_{Z} i(Z,X \cdot X';Y) \, m_Z(\tau),
\]
where the sum is over components $Z$ of $X \cap X'$ whose tropicalizations contain $\tau$.
\end{thm}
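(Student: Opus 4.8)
The plan is to reduce the (possibly nontrivially valued) situation to the trivial-valuation Theorem~\ref{thm:trivial val} by passing to the initial degeneration at a suitable simple point, and then to transport the resulting multiplicity data back to the generic fibre via the principle of continuity of Section~\ref{subsec:tor}.

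I would first make the usual reductions. After an extension of valued fields, which affects neither the tropicalizations, nor their facet multiplicities, nor the properness of the intersection along $\tau$ (Theorem~\ref{thm:initial base change} and \cite[Proposition~6.1]{analytification}), we may assume the valuation is nontrivial. Choosing a common refinement of the complexes, we may assume $\tau$ lies in a single facet $F$ of $\Trop(Y)$, which has multiplicity $1$ because $\tau$ contains a simple point; then every point in the relative interior of $\tau$ is simple, and I would choose such a $w$ with $w\in N_G$. Then $Y_w$ is a torus torsor, in particular smooth, and $\Trop(X_w)$, $\Trop(X'_w)$ are the stars of $w$ in $\Trop(X)$, $\Trop(X')$; inside the vector space $\Trop(Y_w)$ these meet properly along the cone $\bar\tau=\RR_{\geq0}(\tau-w)$. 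Letting $\cY_w$ be the closure of $Y$ in $\cT^w$, which is flat and of finite type over $S$ (Proposition~\ref{prop:finite type}), smoothness of the special fibre $Y_w$ forces $\cY_w$ to be smooth over $S$ on a dense open $\cU$ containing its entire special fibre. For any component $Z$ of $X\cap X'$ with $\tau\subseteq\Trop(Z)$ the initial degeneration $Z_w=\overline Z_k$ is nonempty and contained in $X_w\cap X'_w\subseteq\cU$, so the generic point of $Z$ lies in $\cU_K$, hence in the smooth locus of $Y$; flatness of $\overline Z$ over $S$ together with properness of $X_w\cap X'_w$ in the smooth $Y_w$ gives $\codim_Y Z=j+j'$, so $i(Z,X\cdot X';Y)$ is well defined.

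Applying Theorem~\ref{thm:trivial val} to $X_w$ and $X'_w$ inside $Y_w$ yields $\Trop(X_w\cap X'_w)=\Trop(X_w)\cap\Trop(X'_w)$, the facet-multiplicity lower bound, and the identity
\[
i(\bar\tau,\Trop(X_w)\cdot\Trop(X'_w);\Trop(Y_w))=\sum_{Z_w} i(Z_w,X_w\cdot X'_w;Y_w)\,m_{Z_w}(\bar\tau),
\]
whose left side is strictly positive by Lemma~\ref{lem:support}. Next I would match, term by term, the local displacement rule for $i(\tau,\Trop(X)\cdot\Trop(X');\Trop(Y))$ with the rule for $i(\bar\tau,\Trop(X_w)\cdot\Trop(X'_w);\Trop(Y_w))$: the displacement vector in the former is constrained to the subspace parallel to $F$, which is exactly $\Trop(Y_w)$, so after translating $w$ to the origin the incidence conditions $\sigma\cap(\sigma'+\epsilon v)\neq\emptyset$, the lattice indices $[N:N_\sigma+N_{\sigma'}]$, and the facet multiplicities all agree, and the two tropical intersection numbers coincide. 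Finally I would invoke Theorem~\ref{thm:int-mults-local} with $\cY=\cU$ and the flat subschemes $\overline X\cap\cU$, $\overline{X'}\cap\cU$ to rewrite $i(Z_w,X_w\cdot X'_w;Y_w)=\sum_{\widetilde Z}m(Z_w,\widetilde Z)\,i(\widetilde Z,X\cdot X';Y)$, substitute into the identity above, interchange the order of summation, and use
\[
\sum_{Z_w}m(Z_w,\widetilde Z)\,m_{Z_w}(\bar\tau)=m_{\widetilde Z}(\tau),
\]
valid because $[\widetilde Z_w]=\sum_{Z_w}m(Z_w,\widetilde Z)[Z_w]$ as cycles and the tropicalization of this cycle agrees, with multiplicities, with the star of $w$ in $\Trop(\widetilde Z)$. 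This produces the displayed formula $i(\tau,\Trop(X)\cdot\Trop(X');\Trop(Y))=\sum_Z i(Z,X\cdot X';Y)\,m_Z(\tau)$; positivity of the left side then forces some $Z$ with $\tau\subseteq\Trop(Z)$, so $\tau\subseteq\Trop(X\cap X')$, and comparing term by term with $m_{X\cap X'}(\tau)=\sum_Z\length_{\cO_{Y,Z}}(\cO_{X\cap X',Z})\,m_Z(\tau)$ gives the asserted inequality via $\length_{\cO_{Y,Z}}(\cO_{X\cap X',Z})\geq i(Z,X\cdot X';Y)$ at the smooth generic point of $Z$.

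The main obstacle I anticipate is the bookkeeping in the last step: keeping the several kinds of multiplicity in play — scheme-theoretic lengths, refined intersection multiplicities $i(Z,X\cdot X';Y)$, the specialization multiplicities $m(Z_w,\widetilde Z)$ of Theorem~\ref{thm:int-mults-local}, and tropical facet multiplicities — correctly aligned through the passage to initial degenerations, and verifying the compatibility of $m_{Z_w}(\bar\tau)$ and $m_{\widetilde Z}(\tau)$ with the star construction. Ensuring that $i(Z,X\cdot X';Y)$ is meaningful for each relevant $Z$ (that $Z$ has the expected codimension and meets the smooth locus of $Y$) is a second delicate point, though it follows from flatness over $S$ and the smoothness of $Y_w$ as indicated. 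None of this is deep; it is a matter of assembling standard facts without slippage.
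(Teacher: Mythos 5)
Your proposal is correct and follows essentially the same route as the paper: extend the valued field so that a simple point $w\in\tau$ lies in $N_G$, apply Theorem~\ref{thm:trivial val} to the initial degenerations in the torus torsor $Y_w$, transport the intersection multiplicities back to the generic fiber via Theorem~\ref{thm:int-mults-local}, and combine using $m_Z(\tau)=\sum_{Z_0}m(Z,Z_0)\,m_{Z_0}(\tau_w)$. The only differences are organizational (the paper cites Theorem~\ref{main} up front for the containment $\tau\subseteq\Trop(X\cap X')$ where you derive it from positivity of the tropical multiplicity, and you spell out the matching of displacement rules and the flatness/smoothness hypotheses that the paper leaves implicit).
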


Although the global intersection product $X \cdot X'$ need not be defined, the intersection multiplicities $i (Z, X \cdot X';Y)$ (as defined in terms of $\Tor$ in Section~\ref{sec:toric-intersect}) appearing in the statement of Theorems~\ref{thm:main plus multiplicities} and Theorem~\ref{thm:main-multiple}, below, are nonetheless well-defined.  Indeed, for every component $Z$ of $X \cap X'$ whose tropicalization contains $\tau$, we have that $Y$ must be smooth on a non-empty open subset that intersects $Z$ by the simple point hypothesis, and because $\Trop(X)$ is assumed to meet $\Trop(X')$ properly along $\tau$, it also follows that $X$ meets $X'$ properly along $Z$. 

\begin{proof} 
First, by Theorem \ref{main}, the face $\tau$ is contained in $\Trop(X \cap X')$.  We prove the identity between the tropical intersection multiplicity and the weighted sum of algebraic intersection multiplicities and then deduce the inequality for $m_{X \cap X'}(\tau)$, as follows.  Let $w$ be a point in $\tau$ that is a simple point of $\Trop(Y)$.  After extending the valued field, we may assume $w \in N_G$.  Then the tropical intersection multiplicity along $\tau$ is equal to the local tropical intersection multiplicity at $w$
\[
i(\tau, \Trop(X) \cdot \Trop(X'); \Trop(Y)) = i (\tau_w, \Trop(X_w) \cdot \Trop(X'_w); \Trop(Y_w)),
\]
where $\tau_w = \RR_{\geq 0} (\tau - w)$, as in Section~\ref{sec:compatibility}.
Then, since $Y_w$ is a torus torsor, Theorem~\ref{thm:trivial val} says that this local tropical intersection multiplicity is given by
\[
 i (\tau_w, \Trop(X_w) \cdot \Trop(X'_w); \Trop(Y_w)) \, = \, \sum_{Z_0} i(Z_0, X_w \cdot X'_w; Y_w) m_{Z_0}(\tau_w),
\]
where the sum is over components $Z_0$ of $X_w \cap X'_w$ whose tropicalizations contain $\tau_w$.  By Theorem~\ref{thm:int-mults-local}, for each such $Z_0$,
\[
i(Z_0, X_w \cdot X'_w; Y_w) = \sum_Z m(Z, Z_0) \, i(Z, X \cdot X'; Y),
\]
where the sum is over components $Z$ of $X \cap X'$ and $m(Z,Z_0)$ is the multiplicity of $Z_0$ in the special fiber of the closure of $Z$ in $\cY_w$.  Combining these identities with the results of Section~\ref{sec:compatibility} gives the required equality, since $m_Z(\tau) = \sum_{Z_0} m(Z, Z_0) m_{Z_0}(\tau_w)$, where the sum ranges over components $Z_0$ of the special fiber of the closure of $Z$.

The inequality follows immediately from this equality, since
\[
m_{X \cap X'}(\tau) = \sum_Z \length( \cO_{Z, X \cap X'}) m_Z (\tau),
\]
by Corollary~\ref{cor:cycles}, and $\length( \cO_{Z, X \cap X'}) \geq i(Z, X \cdot X'; Y)$ for each $Z$ \cite[Proposition~7.1]{IT}.
\end{proof}

If $\Trop(X)$ meets $\Trop(X')$ properly in $N_\RR$, then part (2) of Theorem~\ref{thm:main plus multiplicities} has a particularly 
simple statement equating the tropicalization of the refined intersection cycle with the stable tropical intersection.

\begin{cor}\label{cor:main-cycles} Suppose $\Trop(X)$ meets $\Trop(X')$ properly in $N_\RR$.  Then 
$$\Trop(X \cdot X')=\Trop(X) \cdot \Trop(X').$$
\end{cor}

We also note that the inequality in Theorem~\ref{thm:main plus multiplicities} can be replaced by an equality if $X$ and $X'$ are smooth, or mildly singular.

\begin{cor} \label{cor:CM}
Under the hypotheses of Theorem~\ref{thm:main plus multiplicities}, if $X$ and $X'$ are Cohen-Macaulay then
\[
m_{X \cap X'}(\tau) = i(\tau, \Trop(X) \cdot \Trop(X'); \Trop(Y)).
\]
\end{cor}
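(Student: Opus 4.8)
\smallskip
The plan is to revisit the proof of Theorem~\ref{thm:main plus multiplicities} and upgrade its single inequality to an equality under the Cohen--Macaulay hypothesis. That proof already establishes the identity
\[
i(\tau,\Trop(X)\cdot \Trop(X');\Trop(Y)) = \sum_{Z} i(Z,X \cdot X';Y)\, m_Z(\tau),
\]
the sum being over components $Z$ of $X \cap X'$ with $\tau \subset \Trop(Z)$, together with the formula $m_{X \cap X'}(\tau) = \sum_Z \length_{\cO_{Y,Z}}(\cO_{X \cap X',Z})\, m_Z(\tau)$, and it deduces the stated inequality solely from the term-by-term bound $\length_{\cO_{Y,Z}}(\cO_{X \cap X',Z}) \geq i(Z,X \cdot X';Y)$. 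Hence it suffices to prove that this bound is an equality for each relevant component $Z$, which amounts to showing that $X$ and $X'$ are Tor-independent at the generic point of $Z$.

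First I would fix such a component $Z$ and pass to its generic point $\eta$. Since $\tau$ has codimension $j+j'$, the component $Z$ has codimension $j+j'$ in $Y$ and $X$ meets $X'$ properly along it, so the local rings $A = \cO_{Y,\eta}$, $M = \cO_{X,\eta}$, and $N = \cO_{X',\eta}$ satisfy $\dim_A M + \dim_A N = \dim A$, with $M\otimes_A N$ of finite length. Here $A$ is regular (since $Y$ is smooth, as is needed for $i(Z,X\cdot X';Y)$ to be defined), and $M$ and $N$ are Cohen--Macaulay $A$-modules because $X$ and $X'$ are Cohen--Macaulay and the Cohen--Macaulay property passes to localizations. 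Then I would invoke the classical fact that two Cohen--Macaulay modules meeting properly over a regular local ring are Tor-independent, so that $\Tor^i_A(M,N) = 0$ for all $i > 0$; see \cite{Serre65} (the Cohen--Macaulay case) or \cite[Chapter~8]{IT}. Serre's alternating sum formula then collapses to
\[
i(Z,X \cdot X';Y) = \length_A(M\otimes_A N) = \length_{\cO_{Y,Z}}(\cO_{X \cap X',Z}),
\]
which is the desired equality.

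Substituting this equality term by term into the two summation formulas recalled above yields $m_{X \cap X'}(\tau) = i(\tau,\Trop(X)\cdot \Trop(X');\Trop(Y))$, completing the proof. I do not expect any substantial obstacle: the statement is a direct corollary of Theorem~\ref{thm:main plus multiplicities} once one knows the Tor-vanishing, and the only point calling for a line of verification is that we are genuinely in the Cohen--Macaulay situation at each generic point $\eta$ --- namely that $\cO_{X,\eta}$ and $\cO_{X',\eta}$ are Cohen--Macaulay and that the intersection is proper there --- both of which are immediate from the hypotheses and from Theorem~\ref{thm:main plus multiplicities}.
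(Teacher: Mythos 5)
Your proposal is correct and follows essentially the same route as the paper: both arguments rest on the vanishing of $\Tor^i_{\cO_{Y,Z}}(\cO_X,\cO_{X'})$ for $i>0$ at each component $Z$ of proper intersection of Cohen--Macaulay schemes in a smooth ambient variety, so that $i(Z,X\cdot X';Y)$ equals the length $\length(\cO_{Z,X\cap X'})$, and then substitute term by term into the two summation formulas from Theorem~\ref{thm:main plus multiplicities}. You merely spell out the localization at the generic point and the hypotheses of the Tor-independence statement more explicitly than the paper does.
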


\begin{proof}
If $X$ and $X'$ are Cohen-Macaulay, then the length of the scheme-theoretic intersection of $X$ and $X'$ along each component $Z$ of expected dimension is equal to the intersection multiplicity $i(Z, X \cdot X'; Y)$ \cite[Example~8.2.7]{IT}.  The result then follows from the equality in Theorem~\ref{thm:main plus multiplicities}, since
\[
m_{X \cap X'}(\tau) = \sum_Z \length( \cO_{Z, X \cap X'}) m_Z(\tau),
\] 
by Lemma~\ref{lem:linearity}, where the sum is over components $Z$ of $X \cap X'$ such that $\Trop(Z)$ contains $\tau$.
\end{proof}

\subsection{Intersections of three or more subschemes}

In applications, and particularly in the context of enumerative geometry, one frequently wants to intersect more than two subschemes.  The algebraic intersection product of several closed subschemes $X_1 \cdots X_r$ may be treated either by induction from results on intersection of pairs or by a standard reduction to the diagonal argument.  For the reduction to the diagonal, one uses the facts that $X_1 \cap \cdots \cap X_r$ is canonically identified with the intersection $X_1 \times \cdots \times X_r \cap \Delta$ in $Y^r$, where $\Delta$ is the diagonal subscheme, and that $X_1 \times \cdots \times X_r \cdot \Delta$ is the push forward of $X_1 \cdots X_r$ under the diagonal embedding.  

\begin{defn}
Let $X_1, \ldots, X_r$ be closed subschemes in $T$, of pure codimension $j_1, \ldots, j_r$, respectively.  Then the stable tropical intersection $\Trop(X_1) \cdots \Trop(X_r)$ is the iterated pairwise stable tropical intersection
\[
\Trop(X_1) \cdots \Trop(X_r) = (\cdots ((\Trop(X_1) \cdot \Trop(X_2)) \cdot \Trop(X_3)) \cdots \Trop(X_r)).
\]
\end{defn}
\noindent The following proposition shows that stable tropical intersections for three or more closed subschemes can be computed as a pairwise intersection with the tropicalization of the diagonal.  In particular, the stable tropical intersection is independent of the order of the factors.

\begin{prop} \label{prop:stable-multiple}
The stable tropical intersection $\Trop(X_1 \times \cdots \times X_r) \cdot \Trop(\Delta)$ is the image of $\Trop(X_1) \cdots \Trop(X_r)$ under the diagonal embedding of $N_\RR$ in $N_\RR^r$.
\end{prop}

\begin{proof}
Let $\Sigma$ be a complete unimodular fan that contains $\Trop(X_1), \ldots, \Trop(X_r)$ as subfans.  By Proposition~\ref{prop:local int}, the stable tropical intersections $\Trop(X_1 \times \cdots \times X_r) \cdot \Trop(\Delta)$ and $\Trop(X_1) \cdots \Trop(X_r)$ correspond to the products of Minkowski weights $c_{\overline{X_1 \times \cdots \times X_r}} \cdot c_{\overline \Delta}$ and $c_{\overline X_1} \cdots c_{\overline X_r}$, on $\Sigma^r$ and $\Sigma$, respectively.  These products of Minkowski weights are equal to $c_{\overline X_1 \cdots \overline X_r}$ and $c_{\overline{X_1 \times \cdots \times X_r} \cdot \overline \Delta}$, by the identification of rings of Minkowski weights with Chow rings, discussed in Section~\ref{sec:toric-intersect}.  The proposition follows, since $\overline{X_1 \times \cdots \times X_r} \cdot \overline \Delta$ is the push forward of $\overline X_1 \cdots \overline X_r$ under the diagonal embedding.
\end{proof}

As in the case of pairwise intersections, when $X_1, \ldots, X_r$ are subschemes of pure codimension $j_1, \ldots, j_r$ in an irreducible variety $Y$, we define the stable tropical intersection multiplicity along a face $\tau$ of codimension $j_1 + \cdots + j_r$ in $\Trop(Y)$, provided that $\tau$ contains a simple point $w$ of $\Trop(Y)$, as the multiplicity of $\tau_w$ in the stable tropical intersection of the stars of $w$ in $\Trop(X_1), \ldots, \Trop(X_r)$ inside the star of $w$ in $\Trop(Y)$.

We now generalize Theorem~\ref{thm:main plus multiplicities} to intersections of several closed subschemes by reducing to the case of a pairwise intersection with the diagonal in $Y^r$.

\begin{thm}\label{thm:main-multiple} 
Let $X_1,\dots,X_r$ be closed subschemes of pure codimension $j_1,\dots,j_r$ in $Y$, respectively.  Suppose $\tau$ is a facet of 
$\Trop(X_1) \cap \cdots \cap \Trop(X_r)$ of codimension $j_1 + \cdots + j_r$ in $\Trop(Y)$ that contains
a simple point of $\Trop(Y)$.  Then the tropicalization $\Trop(X_1 \cap \cdots \cap X_r)$ contains $\tau$ with multiplicity bounded below by the tropical intersection multiplicity
\[
m_{X_1 \cap \cdots \cap X_r}(\tau) \geq i(\tau,\Trop(X_1) \cdots \Trop(X_r);\Trop(Y)),
\]
and both are strictly positive.  Furthermore, the tropical intersection multiplicity is equal to the weighted sum of algebraic intersection multiplicities
\[
i(\tau,\Trop(X_1)\cdots \Trop(X_r);\Trop(Y))= \sum_{Z} i(Z,X_1 \cdots X_r;Y) m_Z(\tau),
\]
where the sum is over components of $X_1 \cap \cdots \cap X_r$ such that $\Trop(Z)$ contains $\tau$.
\end{thm}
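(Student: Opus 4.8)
The plan is to reduce, in the classical way, to an intersection of two subvarieties by intersecting a product with the small diagonal, and then invoke Theorem~\ref{thm:main plus multiplicities}. Working inside the torus $T^r$, set $\widehat X = X_1 \times \cdots \times X_r$, a subvariety of $Y^r$ of codimension $j_1 + \cdots + j_r$, and let $\Delta$ denote the image of $Y$ under the diagonal embedding $Y \hookrightarrow Y^r$, a subvariety of $Y^r$ of codimension $(r-1)\dim Y$. The diagonal embedding is a closed immersion of tori restricted to a subvariety, so it identifies $\Delta$ with $Y$, and under this identification $\widehat X \cap \Delta$ is identified with $X_1 \cap \cdots \cap X_r$. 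On the tropical side, $\Trop(Y^r) = \Trop(Y)^r$, $\Trop(\widehat X) = \Trop(X_1) \times \cdots \times \Trop(X_r)$, and $\Trop(\Delta)$ is the diagonal copy of $\Trop(Y)$; for a subset $A$ of $\Trop(Y)$ I will write $\delta(A)$ for its diagonal copy inside $\Trop(Y)^r$.

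First I would verify the hypotheses of Theorem~\ref{thm:main plus multiplicities} for $\widehat X$ and $\Delta$ in $Y^r$ along the face $\delta(\tau)$. The codimension count is immediate: $\dim \delta(\tau) = \dim \tau = \dim Y - \sum_i j_i$, so $\delta(\tau)$ has codimension $(r-1)\dim Y + \sum_i j_i = \codim_{Y^r} \widehat X + \codim_{Y^r} \Delta$ in $\Trop(Y)^r$. For simplicity, fix a point $w$ in the relative interior of $\tau$ that is simple in $\Trop(Y)$; then $(w,\dots,w)$ lies in the relative interior of the product of the corresponding facets of $\Trop(Y)$, and since $(Y^r)_{(w,\dots,w)} = (Y_w)^r$ and the multiplicities of irreducible components are multiplicative under products of schemes, this product facet has tropical multiplicity $1$, so $(w,\dots,w)$ is simple in $\Trop(Y^r)$. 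For properness, note that $\Trop(\widehat X) \cap \Trop(\Delta)$ is exactly $\delta\big(\Trop(X_1) \cap \cdots \cap \Trop(X_r)\big)$; since $\tau$ is a facet of $\Trop(X_1) \cap \cdots \cap \Trop(X_r)$, that intersection coincides with $\tau$ in a neighborhood of $w$, so $\Trop(\widehat X) \cap \Trop(\Delta)$ is pure of the expected codimension near $(w,\dots,w)$.

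Theorem~\ref{thm:main plus multiplicities} then yields that $\delta(\tau)$ lies in $\Trop(\widehat X \cap \Delta)$ with multiplicity at least $i\big(\delta(\tau), \Trop(\widehat X) \cdot \Trop(\Delta); \Trop(Y^r)\big)$, both strictly positive, together with the identity
\[
i\big(\delta(\tau), \Trop(\widehat X) \cdot \Trop(\Delta); \Trop(Y^r)\big) = \sum_{\widehat Z} i(\widehat Z, \widehat X \cdot \Delta; Y^r)\, m_{\widehat Z}(\delta(\tau)),
\]
the sum running over components $\widehat Z$ of $\widehat X \cap \Delta$ with $\Trop(\widehat Z) \supseteq \delta(\tau)$. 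It remains to transport each term through the isomorphism $\Delta \cong Y$. The components $\widehat Z$ are precisely the diagonal copies of the components $Z$ of $X_1 \cap \cdots \cap X_r$; one has $\Trop(\widehat Z) = \delta(\Trop(Z))$, so the condition $\Trop(\widehat Z) \supseteq \delta(\tau)$ becomes $\Trop(Z) \supseteq \tau$; the tropical multiplicities agree, $m_{\widehat Z}(\delta(\tau)) = m_Z(\tau)$, since the relevant initial degenerations are identified; and reduction to the diagonal for refined intersection multiplicities \cite[Chapter~8]{IT}, applicable because the diagonal is a regular embedding when $Y$ is smooth, gives $i(\widehat Z, \widehat X \cdot \Delta; Y^r) = i(Z, X_1 \cdots X_r; Y)$. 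Likewise $m_{\widehat X \cap \Delta}(\delta(\tau)) = m_{X_1 \cap \cdots \cap X_r}(\tau)$, so the lower bound and the positivity pass to $X_1 \cap \cdots \cap X_r$. Finally, the left-hand side above equals the $r$-fold tropical intersection multiplicity $i(\tau, \Trop(X_1) \cdots \Trop(X_r); \Trop(Y))$ by the compatibility of stable tropical intersections with products and with reduction to the diagonal, which is the content of the displacement-rule computations in Section~\ref{sec:compatibility}, now carried out in $\Trop(Y)^r$.

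The step I expect to be the main obstacle is exactly this last identification: reconciling the $r$-fold tropical intersection multiplicity --- however one chooses to define it, whether by a single local displacement rule or by iterating binary stable intersections --- with the binary stable intersection of $\Trop(X_1) \times \cdots \times \Trop(X_r)$ against the diagonal inside $\Trop(Y)^r$. This is a combinatorial statement about Minkowski weights and the fan displacement rule (or, equivalently, associativity of the product on $\Mink^*$), but it must be proved with the displacement vectors taken small and constrained to the linear space parallel to the facet of $\Trop(Y)$ through $w$; one checks it term by term, mirroring the proofs in Section~\ref{sec:compatibility}. A secondary point to handle is that $Y$ must be smooth along the components $Z$ in order for the diagonal of $Y^r$ to be a regular embedding, so that $i(Z, X_1 \cdots X_r; Y)$ and the reduction-to-the-diagonal formula are meaningful; this is the same hypothesis under which the corresponding multiplicities are used in Theorem~\ref{thm:main plus multiplicities}.
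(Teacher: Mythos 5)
Your proposal is correct and follows essentially the same route as the paper: reduction to the small diagonal in $Y^r$, application of Theorem~\ref{thm:main plus multiplicities} to $X_1 \times \cdots \times X_r$ and $\Delta$, and the compatibility of the $r$-fold tropical intersection multiplicity with the binary one via the displacement rules of Section~\ref{sec:compatibility}. Your write-up is in fact more careful than the paper's (which leaves the verification of simplicity of $(w,\dots,w)$ and the term-by-term transport through the diagonal isomorphism implicit), and you correctly identify the diagonal/product compatibility of stable tropical intersections as the one step requiring a genuine check.
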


\begin{proof}
Let $\tau$ be a facet of $\Trop(X_1) \cap \cdots \cap \Trop(X_r)$ of codimension $j_1 + \cdots + j_r$ in $\Trop(Y)$ that contains a simple point $w$ of $\Trop(Y)$.  Then the image of $\tau$ under the diagonal embedding $\delta$ of $\Trop(Y)$ in $\Trop(Y)^r$ is a facet of $\Trop(X_1 \times \cdots \times X_r) \cap \Trop(\Delta)$ of codimension $(r-1) \dim(Y) + j_1 + \cdots + j_r$ in $\Trop(Y^r)$, where $\Delta$ is the diagonal subscheme in $Y^r$, containing a simple point of $\Trop(Y^r)$.  By Theorem~\ref{thm:main plus multiplicities}, $\delta(\tau)$ is a face of $\Trop(X_1 \times \cdots \times X_r \cap \Delta)$.  Furthermore, the multiplicity of $\delta(\tau)$ in the stable tropical intersection $\Trop(X_1 \times \cdots \times X_r) \cdot \Trop(\Delta)$ inside $\Trop(Y^r)$ is strictly positive, equal to
\[
\sum_Z i(Z, X_1 \times \cdots \times X_r \cdot \Delta; Y^r) m_Z(\delta(\tau)),
\] 
and less than or equal to $m_{X_1 \times \cdots \times X_r \cap \Delta}(\delta(\tau))$.  Now $X_1 \times \cdots \times X_r \cap \Delta$ is canonically identified with $X_1 \cap \cdots \cap X_r$, and both the tropical multiplicities and the local intersection multiplicities agree.  In other words, we have
\[
i(Z, X_1 \times \cdots \times X_r \cdot \Delta; Y^r) = i(Z, X_1 \cdots X_r; Y)
\]
and
\[
m_Z(\delta(\tau)) = m_Z(\tau).
\]
The theorem then follows, by Proposition~\ref{prop:stable-multiple}.
\end{proof}


The following application of Theorem~\ref{thm:main-multiple}, giving a formula for counting points in zero-dimensional complete intersections, has been applied by Rabinoff to construct canonical subgroups of abelian varieties over $p$-adic fields, via a suitable generalization for power series \cite{Rabinoff12b, Rabinoff12}.  

Let $f_i = \sum a_i(u) x^u$ be an equation defining $X_i$, with $P_i = \conv \{u | a_i \mbox{ is nonzero} \}$ its Newton polytope.  Projecting the lower faces of the lifted Newton polytope $\conv \{ (u, \nu(a_i(u)) \}$ in $M_\RR \times \RR$ gives the \textbf{Newton subdivision} of $P_i$.  This Newton subdivision is dual to $\Trop(X_i)$, in the sense that there is a natural polyhedral structure on $\Trop(X_i)$ whose faces are in order-reversing bijection with the positive dimensional faces of the Newton subdivision.  A face $\tau$ of $\Trop(X_i)$ corresponds to the convex hull of the lattice points $u$ such that $a_i(u)x^u$ is a monomial of minimal $w$-weight, for $w$ in the relative interior of $\tau$.

Our formula is phrased in terms of mixed volumes of faces of the Newton subdivision.  Recall that, for lattice polytopes $Q_1, \ldots, Q_n$ in $M_\RR \cong \RR^n$, the euclidean volume of the Minkowski sum $b_1 Q_1 + \cdots + b_n Q_n$ is a polynomial of degree $n$ in $b_1, \ldots, b_n$, and the mixed volume $V(Q_1, \ldots, Q_n)$ is the coefficient of $b_1 \cdots b_n$ divided by $n!$.  If $\Sigma$ is the smallest common refinement of the inner normal fans of the $Q_i$, then each $Q_i$ corresponds to a nef line bundle $L_i$ on the toric variety $Y(\Sigma)$, and the mixed volume is equal to the intersection number, divided by $n!$,
\[
V(Q_1, \ldots, Q_n) = (c_1(L_1) \cdots c_1(L_n))/n!.
\]
See \cite[p.~116]{Fulton93} for further details on mixed volumes and their relation to toric intersection theory\footnote{There is a minor misstatement in the definition of mixed volumes, in the text in between displayed formulas (1) and (2) of \cite[p.~116]{Fulton93}.  The mixed volume is the coefficient of $\nu_1 \cdots \nu_n$ divided by $n!$, not multiplied by $n!$.  The displayed formulas (1), (2), and (3) are correct.  Formulas (1) and (2) uniquely determine the mixed volumes of rational polytopes, as does (3), which also characterizes mixed volumes of arbitrary convex bodies.} and \cite[Section~4]{KavehKhovanskii12} for generalizations to arbitrary projective varieties via Newton-Okounkov bodies.  Mixed volume formulas for tropical stable complete intersections are standard in the case of the trivial valuation.  The earliest reference we know of relating mixed volumes to tropical complete intersections for a nontrivial (discrete) valuation is due to Smirnov \cite{Smirnov97}.

\begin{cor} \label{cor:complete-intersection}
Let $X_1, \ldots, X_n$ be hypersurfaces in $T$, and suppose $w$ is an isolated point in $\Trop(X_1) \cap \cdots \cap \Trop(X_n)$.  Let $Q_i$ be the face of the Newton sub\-di\-vision corresponding to the minimal face of $\Trop(X_i)$ that contains $w$.  Then the number of points in $X_1 \cap \cdots \cap X_n$ with tropicalization $w$, counted with multiplicities, is exactly $n! V (Q_1, \ldots, Q_n)$.
\end{cor}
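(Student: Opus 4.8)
\section*{Proof proposal for Corollary~\ref{cor:complete-intersection}}

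The plan is to read the statement off from Theorem~\ref{thm:main-multiple}, taking the ambient subvariety of $T$ to be $T$ itself, and then to identify the resulting tropical intersection multiplicity with the mixed volume. Each $X_i$ has codimension $j_i = 1$ in $T$, and since $w$ is an isolated point of $\Trop(X_1)\cap\cdots\cap\Trop(X_n)$ this intersection is just $\{w\}$ near $w$, hence has codimension $n = j_1+\cdots+j_n$ in $N_\RR = \Trop(T)$ there; so $\Trop(X_1),\dots,\Trop(X_n)$ meet properly along the facet $\tau=\{w\}$, which contains the simple point $w$ of $\Trop(T)$ (every point of $\Trop(T)$ being simple). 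Theorem~\ref{thm:main-multiple} then gives $w\in\Trop(X_1\cap\cdots\cap X_n)$ together with
\[
i(\tau, \Trop(X_1)\cdots\Trop(X_n)) \;=\; \sum_Z i(Z, X_1\cdots X_n; T)\, m_Z(\tau),
\]
the sum running over components $Z$ of $X_1\cap\cdots\cap X_n$ with $w\in\Trop(Z)$, and both sides strictly positive. Any such $Z$ satisfies $\Trop(Z)\subseteq\Trop(X_1)\cap\cdots\cap\Trop(X_n)$ with $w\in\Trop(Z)$, so $w$ is isolated in the pure-dimensional polyhedral set $\Trop(Z)$; hence $\dim Z = 0$, and $Z$ is a single $K$-point $P$ with $\trop(P)=w$ and $m_Z(\tau)=1$. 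Thus the left-hand side is precisely the number of points of $X_1\cap\cdots\cap X_n$ with tropicalization $w$, counted with intersection multiplicity, and it remains to show it equals $V(Q_1,\dots,Q_n)$.

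To compute $i(\tau,\Trop(X_1)\cdots\Trop(X_n))$ I would pass to initial degenerations at $w$. The local displacement rule defining this multiplicity depends only on the stars of $w$ in the $\Trop(X_i)$, and these stars are the tropicalizations with respect to the trivial valuation of the initial degenerations $(X_i)_w$. Now $(X_i)_w$ is the hypersurface in $T_w$ cut out by the sum of the lowest-$w$-weight monomials of $f_i$, so by the description of the Newton subdivision recalled above its Newton polytope is exactly the face $Q_i$. Hence $i(\tau,\Trop(X_1)\cdots\Trop(X_n))$ equals the stable tropical intersection multiplicity at the origin, over the trivial valuation, of $n$ tropical hypersurfaces with Newton polytopes $Q_1,\dots,Q_n$.

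This last quantity is the standard mixed volume, and I would verify it via the compatibility of stable tropical and toric intersections from Section~\ref{sec:compatibility} --- applied iteratively, or carried out via the small diagonal in $Y(\Sigma)^n$ as in the proof of Theorem~\ref{thm:main-multiple}. Choosing a complete unimodular fan $\Sigma$ refining the common refinement of the inner normal fans of the $Q_i$, the multiplicity in question equals $\deg([\overline{(X_1)_w}]\cdots[\overline{(X_n)_w}])$ in the smooth complete toric variety $Y(\Sigma)$. The closure of a hypersurface with Newton polytope $Q_i$ represents the class $c_1(L_i)$ of the nef line bundle $L_i$ on $Y(\Sigma)$ attached to $Q_i$, because the multiplicity of $\overline{(X_i)_w}$ along each torus-invariant prime divisor is read off from the support function of $Q_i$; therefore the degree is $(c_1(L_1)\cdots c_1(L_n)) = V(Q_1,\dots,Q_n)$ by the dictionary between mixed volumes and toric intersection numbers recalled above \cite[p.~116]{Fulton93}. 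The only real work is the bookkeeping in this last step --- checking that taking the initial degeneration replaces the Newton polytope $P_i$ by its face $Q_i$, and that the Chow class of the resulting hypersurface depends only on $Q_i$ and not on the particular coefficients of its defining polynomial --- but this is routine once the correspondence between Newton subdivisions, stars of tropicalizations, and torus-invariant divisors is made explicit, and all of it takes place on a single smooth toric variety, so no genuine difficulty arises.
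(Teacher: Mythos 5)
Your proposal is correct and follows essentially the same route as the paper: reduce via Theorem~\ref{thm:main-multiple} to computing the tropical intersection multiplicity at $w$, identify the stars $\Star_w\Trop(X_i)$ with the (codimension-1 skeleta of the) inner normal fans of the $Q_i$, and evaluate the resulting product of Minkowski weights as the toric intersection number $(c_1(L_1)\cdots c_1(L_n)) = V(Q_1,\dots,Q_n)$. The only difference is that you spell out a detail the paper leaves implicit — that every component $Z$ of $X_1\cap\cdots\cap X_n$ with $w\in\Trop(Z)$ is a reduced $K$-point with $m_Z(\tau)=1$, so the weighted sum in Theorem~\ref{thm:main-multiple} really is the count of points with tropicalization $w$ — which is a worthwhile clarification but not a different argument.
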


\begin{proof}
Fix a complete unimodular fan $\Sigma$ in $N_\RR$ such that, for $i = 1, ..., n$,  every face of $\Star_w \Trop(X_i)$ is a union of faces of $\Sigma$.  Since $\Star_w \Trop(X_i)$ is the codimension 1 skeleton of the possibly degenerate inner normal fan of $Q_i$, it follows that $Q_i$ corresponds to a nef line bundle $L_i$ on $Y(\Sigma)$.  Furthermore, the Minkowski weight of codimension 1 on $\Sigma$ given by the tropical multiplicities on $\Star_w \Trop(X_i)$ corresponds to $c_1(L_i)$.  It follows, by the compatibility of toric and stable tropical intersections that the tropical intersection multiplicity
\[
i(w, \Trop(X_1) \cdots \Trop(X_n)) = (c_1(L_1) \cdots c_1(L_n)),
\]
and the latter is $n! V(Q_1, \ldots, Q_n)$.  By Theorem~\ref{thm:main-multiple}, this tropical intersection multiplicity is equal to the number of points in $X_1 \cap \cdots \cap X_n$ with tropicalization $w$, counted with multiplicities, as required.
\end{proof}

\begin{rem}
In Corollary~\ref{cor:complete-intersection}, the multiplicities on the points $x$ in $X_1 \cap \cdots \cap X_n$ with tropicalization $w$ are equal to the lengths of the scheme-theoretic intersection of $X_1, \ldots, X_n$ along $x$, as in Corollary~\ref{cor:CM}, since complete intersections are Cohen-Macaulay.
\end{rem}

\subsection{Non-proper intersections}

Our main tropical lifting results require the tropicalizations to meet properly.  Nevertheless, our results on lifting from the special fiber to the generic fiber of $\cY^w$, such as Theorem~\ref{exploded main}, still yield nontrivial statements for nonproper tropical intersections.  In many cases, such as Example~\ref{ex:lines}, the initial degenerations meet properly even when the tropicalizations do not.

\begin{prop}\label{cor:degens-proper} 
Suppose that for each $w \in \Trop(X) \cap \Trop(X')$, we have that $Y_w$ is 
smooth and $X_w$ meets $X'_w$ properly in $Y_w$.  Then $X$ meets $X'$ properly in $Y$.  Furthermore,  the set of $w$ such that $X_w \cap X'_w$ is nonempty is either empty or the underlying set of a polyhedral complex of pure codimension $j+j'$ in $\Trop(Y)$. 
\end{prop}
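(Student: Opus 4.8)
The plan is to identify the set in question, which I will write $\Sigma_0 = \{\, w \in N_\RR \mid X_w \cap X'_w \neq \emptyset \,\}$, with the tropicalization $\Trop(X \cap X')$ of the scheme-theoretic intersection. Since $\Trop(X \cap X')$ is the union of the tropicalizations of the finitely many irreducible components of $X \cap X'$, it is automatically the underlying set of an integral $G$-affine polyhedral complex, so once the identification is in hand it remains only to check that this complex is pure of codimension $j + j'$ in $\Trop(Y)$, and for that I would pass to the local picture at a point in the relative interior of a maximal face and read off the dimension from the properness hypothesis.

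First I would establish $\Sigma_0 = \Trop(X \cap X')$. One inclusion is formal: the ideal of $X \cap X'$ contains those of $X$ and of $X'$, so the initial ideal of $X \cap X'$ at $w$ contains those of $X$ and of $X'$, whence $(X \cap X')_w \subseteq X_w \cap X'_w$ and therefore $\Trop(X \cap X') \subseteq \Sigma_0$. For the reverse inclusion, let $w \in \Sigma_0$ and pick a closed point $x$ of $X_w \cap X'_w$. By hypothesis $Y_w$ is smooth, so $x$ is a smooth point of $Y_w$, and $X_w$ meets $X'_w$ properly at $x$; Theorem~\ref{exploded main} then places $x$ in $(X \cap X')_w$, so $(X \cap X')_w$ is nonempty and $w \in \Trop(X \cap X')$. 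Running this same argument at every closed point of $X_w \cap X'_w$ gives the stronger conclusion that $(X \cap X')_w = X_w \cap X'_w$ as sets, for each $w \in \Sigma_0$.

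Next I would prove purity. Let $\tau$ be a maximal face of $\Trop(X \cap X')$ and let $w$ lie in its relative interior. After an extension of valued fields --- which changes neither the tropicalizations nor the hypotheses, and which I may take so that $\nu$ is nontrivial and $w \in N_G$ --- the integral model $\cY^w$ is of finite type over $\Spec R$, so $Y_w$ has pure dimension $\dim Y$. Because $\tau$ is maximal, every face of $\Trop(X \cap X')$ through $w$ contains $\tau$, so the star of $w$ in $\Trop(X \cap X')$ is the linear span of $\tau - w$, a vector space of dimension $\dim \tau$. On the other hand this star is canonically identified with $\Trop\bigl((X \cap X')_w\bigr) = \Trop(X_w \cap X'_w)$, and by the properness hypothesis $X_w \cap X'_w$ is pure of codimension $j + j'$ in $Y_w$, hence of pure dimension $\dim Y - j - j'$, so its tropicalization is pure of that dimension. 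Comparing the two descriptions of the star forces $\dim \tau = \dim Y - j - j'$. Since $\Trop(Y)$ is pure of dimension $\dim Y$ and contains $\Trop(X \cap X')$, this says exactly that $\Sigma_0 = \Trop(X \cap X')$ has pure codimension $j + j'$ in $\Trop(Y)$.

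Because Theorem~\ref{exploded main} supplies the essential geometric content, I do not expect a serious obstacle; the one point that requires care is the non-formal inclusion $\Sigma_0 \subseteq \Trop(X \cap X')$ and the resulting set-theoretic equality $(X \cap X')_w = X_w \cap X'_w$, since it is this equality --- and not just nonemptiness --- that makes the star of $w$ in $\Trop(X \cap X')$ computable from the proper intersection $X_w \cap X'_w$. The smoothness of $Y_w$ is used only through Theorem~\ref{exploded main}, and is precisely the hypothesis that fails at non-simple points of $\Trop(Y)$.
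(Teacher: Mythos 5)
Your proposal is correct and follows essentially the same route as the paper: the formal inclusion $(X\cap X')_w\subseteq X_w\cap X'_w$ plus Theorem~\ref{exploded main} at (closed) points of $X_w\cap X'_w$ gives $(X\cap X')_w=X_w\cap X'_w$ for all $w$, and purity then follows from the properness hypothesis on the initial degenerations. The only difference is cosmetic --- you verify purity locally via stars at relative interior points of maximal faces, while the paper passes through the statement that $X\cap X'$ itself has pure codimension $j+j'$ in $Y$.
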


\begin{proof}
If $w$ is in $\Trop(X \cap X')$, then $X_w$ meets $X'_w$ along $(X \cap X')_w$.  
Conversely, if $X_w$ meets $X'_w$ properly at some smooth point $x$ of $Y_w$ then $x$ is contained in $(X \cap X')_w$, by Theorem~\ref{exploded main}, and hence $w$ is in $\Trop(X \cap X')$.  Therefore, the hypotheses of the proposition ensure that the set of $w$ such that $X_w$ meets $X'_w$ is exactly $\Trop(X \cap X')$, and $(X \cap X')_w$ is equal to $X_w \cap X'_w$, for all $w$.  If $X_w \cap X'_w$ is nonempty then it has pure codimension $j + j'$ in $Y_w$.  It follows that $X \cap X'$ has pure codimension $j + j'$ in $Y$, and hence $\Trop(X \cap X')$ is a polyhedral complex of pure codimension $j + j'$ in $\Trop(Y)$, as required. 
\end{proof}

In the special case where $Y$ is the ambient torus $T$, one can say even more.  If all initial degenerations meet properly, then the set of weight vectors where the intersection is nonempty is exactly the stable tropical intersection, as was suggested to us by J. Rau.

\begin{prop}
Suppose $X_w$ meets $X'_w$ properly in $T_w$ for all $w$.  Then the set of $w \in N_\RR$ such that $X_w \cap X'_w$ is nonempty is exactly the underlying set of the stable tropical intersection $\Trop(X) \cdot \Trop(X')$.
\end{prop}

\begin{proof}
Let $\Sigma$ be a complete unimodular fan that contains the stars of $w$ in $\Trop(X)$ and in $\Trop(X')$ as subfans.  First we claim that, for any $w$ in $N_\RR$, the closures $\overline X_w$ and $\overline X'_w$ meet properly in $Y(\Sigma)$ and, furthermore, $\overline X_w \cap O_\sigma$ and $\overline X'_w \cap O_\sigma$ meet properly in $O_\sigma$ for every $\sigma$ in $\Sigma$.  Indeed, after choosing an extension of valued fields such that $w$ is rational over the value group and subdividing $\Sigma$, we may assume that it contains tropical fans, in the sense of \cite{Tevelev07}, for $X_w$ and $X'_w$ as subfans.  Then, for $v \in N_G$ in the relative interior of $\sigma$ and $\epsilon \in G$ sufficiently small and positive, the initial degenerations $X_{w + \epsilon v}$ and $X'_{w + \epsilon v}$ agree with the initial degenerations of $X_w$ and $X'_w$, respectively, for the weight vector $v$ \cite[Corollary~10.12]{Gubler12}.  The images of these initial degenerations under projection to $O_\sigma$ are identified with $\overline X_w \cap O_\sigma$ and $\overline X'_w \cap O_\sigma$, up to simultaneous translation in $O_\sigma$ \cite[Remark~12.7]{Gubler12}.  Since $X_{w + \epsilon v}$ and $X'_{w + \epsilon v}$ meet properly in $T_{w + \epsilon v}$, it follows that $\overline X_w \cap O_\sigma$ and $\overline X'_w \cap O_\sigma$ meet properly in $O_\sigma$, as claimed.

Since $\overline X_w \cap O_\sigma$ and $\overline X'_w \cap O_\sigma$ meet properly in $O_\sigma$ for every $\sigma$ in $\Sigma$, Proposition~\ref{prop:cycles} says that a cone $\tau_w$ of codimension $j + j'$ in $\Sigma$ appears in the tropicalization of $X_w \cdot X'_w$ with multiplicity equal to the stable tropical intersection multiplicity $i(\tau_w, \Trop(X_w) \cdot \Trop(X'_w))$ which agrees with the stable tropical intersection multiplicity $i(\tau, \Trop(X) \cdot \Trop(X')$, by Proposition~\ref{prop:local int}.  In particular, $X_w \cap X'_w$ is nonempty if and only if $w$ is contained in the stable tropical intersection $\Trop(X) \cdot \Trop(X')$.
\end{proof}

\smallskip
For closed subschemes of the torus $T$, when a tropical intersection is not proper we can translate the subschemes by a suitable element of $T(K)$ so that the initial degenerations meet properly, without changing the tropicalizations.  The following theorem extends Proposition~\ref{prop:translation} to the general case, where the valuation may not be trivial, and gives a geometric meaning to the stable tropical intersection, as the tropicalization of an intersection with a translate by a general point $t$ such that $\Trop(t)$ is zero.

If $t$ is in $T(K)$ then the tropicalization of the translate $tX$ is the translate of $\Trop(X)$ by the vector $\Trop(t)$ in $N_G$.  In particular, the tropicalization is invariant under translation by points $t$ such that $\Trop(t)$ is zero.  The set of such points is Zariski dense in $T(K)$ \cite[Corollary~4.2]{tropicalfibers} (see also \cite[Remark~2]{TropicalFibers-Correction}), but not Zariski open if the valuation is nontrivial.  Nevertheless, we say that a property holds for a general point $t$ such that $\Trop(t)$ is zero if there is a Zariski open subset $U_0$ of the initial degeneration $T_0$ such that the property holds for all $t$ with $t_0 \in U_0$.  This set is again Zariski dense in $T(K)$.

\begin{thm} \label{thm:translation}
Let $X$ and $X'$ be pure-dimensional closed subschemes of $T$.   Then, for a general point $t$ such that $\Trop(t)$ is zero,
\[
\Trop(X \cap tX') = \Trop(X) \cdot \Trop(X').
\]
\end{thm}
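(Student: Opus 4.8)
The plan is to reduce Theorem~\ref{thm:translation} to the trivial-valuation statement Proposition~\ref{prop:translation}, applied to the initial degenerations of $X$ and $X'$ at every weight vector. The first step is to record the local structure of translation by a point $t$ with $\Trop(t)=0$. Such a $t$ is precisely an $R$-point of $\cT$, and the condition $\nu(x^u(t))=0$ for all $u$ is exactly what is needed for multiplication by $t$ to preserve each tilted group ring $R[M]^w$ with $w\in N_G$; hence translation by $t$ extends to an automorphism of every integral model $\cT^w$, and on the special fiber this is translation of the $\cT_k$-torsor $T_w$ by the reduction $\bar t\in\cT_k(k)$. Since automorphisms commute with scheme-theoretic closure, $(tX')_w=\bar t\cdot X'_w$, and $\bar t$ does not depend on $w$ --- this uniformity is what will let a single general $t$ work simultaneously at all weight vectors.

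Next I would fix compatible polyhedral structures on $\Trop(X)$ and $\Trop(X')$ and choose representatives $w_1,\dots,w_m$, one in the relative interior of each face, so that $(X_w,X'_w)$ in $T_w$ is $\cT_k$-affinely equivalent to $(X_{w_i},X'_{w_i})$ in $T_{w_i}$ for the appropriate $i$. Since $k$ is algebraically closed, each $T_{w_i}$ is a trivial torsor, so $X_{w_i}$ and $X'_{w_i}$ may be regarded as subvarieties of a torus over $k$ with its trivial valuation; applying Proposition~\ref{prop:translation} there (component by component, to handle reducibility) yields a dense open $V_i\subseteq\cT_k$ such that for $s\in V_i$ the translate $sX'_{w_i}$ meets $X_{w_i}$ properly and $\Trop(X_{w_i}\cap sX'_{w_i})=\Trop(X_{w_i})\cdot\Trop(X'_{w_i})=\Star_{w_i}(\Trop(X)\cdot\Trop(X'))$, the last equality by the compatibility results of Section~\ref{sec:compatibility}. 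Put $V=V_1\cap\cdots\cap V_m$, still dense and open; by the definition of a general point with $\Trop(t)=0$ it suffices to treat $t$ with $\bar t\in V$. For such a $t$ and an arbitrary $w\in N_\RR$, after harmlessly extending the valued field so that the valuation is nontrivial and $w\in N_G$, pick $i$ with $w$ in the face of $w_i$ and a $\cT_k$-equivariant isomorphism $\varphi\colon T_w\risom T_{w_i}$ carrying $(X_w,X'_w)$ to $(X_{w_i},X'_{w_i})$; equivariance forces $\varphi(\bar tX'_w)=\bar tX'_{w_i}$, so since $\bar t\in V\subseteq V_i$ we conclude that $X_w$ meets $(tX')_w=\bar tX'_w$ properly and $\Trop(X_w\cap(tX')_w)=\Star_w(\Trop(X)\cdot\Trop(X'))$ as weighted complexes. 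In particular $X_w\cap(tX')_w$ is pure of codimension $j+j'$, so by Theorem~\ref{exploded main} (ambient torus $T$, whose initial degenerations are smooth) it is contained in $(X\cap tX')_w$; combined with the trivial inclusion $(X\cap tX')_w\subseteq X_w\cap(tX')_w$ this matches their supports, and running over all $w$ shows $\Trop(X\cap tX')$ and $\Trop(X)\cdot\Trop(X')$ have the same support.

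The remaining point, which I expect to be the main technical obstacle, is matching the facet multiplicities, and here I would mimic the proof of Proposition~\ref{prop:translation} one level down. Fix a facet $\tau$ of the common support, $w$ in its relative interior (with $\tau_w$ the corresponding facet of the star), and extend so $w\in N_G$. By Section~\ref{sec:compatibility} together with $\Trop((tX')_w)=\Trop(X'_w)$ we have $i(\tau,\Trop(X)\cdot\Trop(X'))=i(\tau_w,\Trop(X_w)\cdot\Trop((tX')_w))$; since $t$ is a general translate, Proposition~\ref{prop:cycles} applied over $k$ and the transversality theorem of Miller and Speyer give
\[
i\big(\tau_w,\Trop(X_w)\cdot\Trop((tX')_w)\big)=\sum_Z\length\big(\cO_{X_w\cap(tX')_w,\,Z}\big)\,m_Z(\tau_w),
\]
the sum over components $Z$ of $X_w\cap(tX')_w$. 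Now Theorem~\ref{thm:int-mults-local}, applied to the flat models $\cX^w$ and $(t\cX')^w$ in the smooth affine $S$-scheme $\cT^w$, together with Miller--Speyer once more for the general translate $t$, identifies $\length(\cO_{X_w\cap(tX')_w,\,Z})$ with $\sum_{\widetilde Z}m(Z,\widetilde Z)\,\length(\cO_{X\cap tX',\,\widetilde Z})$, where $\widetilde Z$ runs over components of $X\cap tX'$ whose closure contains $Z$ and $m(Z,\widetilde Z)$ is the multiplicity of $Z$ in the special fiber of that closure. Substituting, and using the compatibility of tropical multiplicities with initial degeneration in the form $m_{\widetilde Z}(\tau)=\sum_Z m(Z,\widetilde Z)\,m_Z(\tau_w)$, gives
\[
i\big(\tau,\Trop(X)\cdot\Trop(X')\big)=\sum_{\widetilde Z}\length\big(\cO_{X\cap tX',\,\widetilde Z}\big)\,m_{\widetilde Z}(\tau)=m_{X\cap tX'}(\tau),
\]
the last equality being the definition of the tropical multiplicity of the scheme $X\cap tX'$. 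Hence $\Trop(X\cap tX')=\Trop(X)\cdot\Trop(X')$ as weighted polyhedral complexes. The delicate parts of this argument are the cycle-level bookkeeping just carried out and the earlier observation --- supplied by the reduction to finitely many $w_i$ and the $\cT_k$-equivariance of the affine-equivalence isomorphisms --- that one general $\bar t$ simultaneously puts $X_w$ and $(tX')_w$ in proper position at every weight vector $w$.
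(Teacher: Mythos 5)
Your proof is correct and follows essentially the same route as the paper's: reduce to the trivial-valuation statement (Proposition~\ref{prop:translation}) applied to the initial degenerations at one representative $w$ per face, propagate to all $w$ by $\cT_k$-affine equivalence, lift supports via Theorem~\ref{thm:lift to K}, and match multiplicities via Theorem~\ref{thm:int-mults-local} exactly as in the proof of Theorem~\ref{thm:main plus multiplicities}; you simply write out the multiplicity bookkeeping that the paper dismisses as ``straightforward.'' The one point worth making explicit is your second appeal to Miller--Speyer on the generic fiber: its Zariski-open condition on $t$ is compatible with the convention ``general with $\Trop(t)=0$'' because any $t$ with $\Trop(t)=0$ lying in the bad closed subset $W\subset T$ reduces into the proper closed subset $W_0\subset T_0$, so the condition can be absorbed into the choice of $U_0$.
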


\begin{proof}
If the valuation is trivial then the theorem is given by Proposition~\ref{prop:translation}.  Assume the valuation is nontrivial.  Let $\tau$ be a face of $\Trop(X) \cap \Trop(X')$, and let $w \in N_G$ be a point in $\tau$.  The initial degeneration $(tX')_w$ is the translate of $X'_w$ by $t_0$.  Since $t_0$ is general in $T_0$, Proposition~\ref{prop:translation} says that
\[
\Trop(X_w \cap (tX')_w) = \Trop(X_w) \cdot \Trop(X'_w).
\]
In particular, if $w$ is not in the stable tropical intersection then the initial degenerations $X_w$ and $(tX')_w$ are disjoint, so $w$ is not in $\Trop(X \cap tX')$.  On the other hand, if $w$ is in the stable intersection then $X_w$ meets $(tX')_w$ properly in $T_w$, by Proposition~\ref{prop:translation}, and all points of $X_w \cap (tX')_w$ lift to the generic fiber $X \cap tX'$, by Theorem~\ref{thm:lift to K}.  Note that for $w$ and $w'$ in the relative interior of $\tau$, because the isomorphisms $X_w \cong X_{w'}$ and $tX'_w \cong tX'_{w'}$ can be simultaneously induced by a single isomorphism $T_w \cong T_{w'}$, if $X_w$ meets $tX'_w$ then $X_{w'}$ likewise meets $tX'_{w'}$ properly.   Since there are only finitely many faces of $\Trop(X) \cap \Trop(X')$, the point $t$ can be chosen sufficiently general so that this holds in every face.  

This shows that the underlying sets of $\Trop(X \cap tX')$ and $\Trop(X) \cdot \Trop(X')$ are equal.  Now, let $\tau$ be a facet of $\Trop(X) \cdot \Trop(X')$ and let $w \in N_G$ be a point in the relative interior of $\tau$.  The tropical intersection multiplicity along $\tau$ is equal to the local tropical intersection multiplicity at $w$
\[
i(\tau, \Trop(X) \cdot \Trop(X'); \Trop(T)) = i (\tau_w, \Trop(X_w) \cdot \Trop(X'_w); \Trop(T_w)),
\]
and, by Proposition~\ref{prop:translation}, the right hand side is equal to $m_{X_w \cap tX'_w} (\tau_w)$.  Applying Theorem~\ref{thm:int-mults-local} to the intersection of $\cX^w$ and $(t\cX')^w$ in $\cT^w$, as in the proof of Theorem~\ref{thm:main plus multiplicities}, then shows that $m_{X_w \cap tX'_w} (\tau_w)$ is equal to $m_{X \cdot tX'}(\tau)$.  As in the proof of Proposition~\ref{prop:translation}, the genericity of $t$ guarantees that the cycle $X \cdot tX'$ is equal to the fundamental cycle of $X \cap tX'$.  Therefore, $m_{X \cdot tX'}(\tau)$ is equal to $m_{X \cap tX'}(\tau)$, and the theorem follows.
\end{proof}

\section{Examples} \label{sec:examples}

Here we give a number of examples illustrating our tropical lifting theorems and the necessity of their hypotheses.  The first example involves tropicalizations that meet properly, but not necessarily in the interiors of maximal faces.

\begin{ex2}\label{ex:parabola} 
Inside $Y = (K^*)^2$, let $X$ and $X'$ be given by $y= x + 1$ and $y = ax^2$, respectively, with $a \in K^*$. We consider three cases, according to whether or not $\nu(a)$ is zero, and its sign if it is nonzero.

\begin{center}
\begin{picture}(200,205)(-100,-90)
\thicklines
\put(80,8){$\Trop(X)$}
\put(67,65){$\nu(a) < 0$}
\put(50,85){$\nu(a) = 0$}
\put(30,103){$\nu(a) > 0$}
\put(-55,-82){$\Trop(X')$}
\color{navy}
\put(0,0){\line(-1,-1){70}}
\put(0,0){\line(1,0){100}}
\put(0,0){\line(0,1){100}}
\color{darkred}
\put(-60,-70){\line(1,2){83}}
\put(-35,-70){\line(1,2){77}}
\put(-10,-70){\line(1,2){69}}
\end{picture}
\end{center}

\noindent If $\nu(a)$ is positive then $\Trop(X)$ meets $\Trop(X')$ at two points, each with multiplicity $1$.   In this case, Theorem~\ref{thm:main plus multiplicities} says that $X$ meets $X'$ at two points, each with multiplicity 1, and one of these intersection points lies over each of the points of $\Trop(X) \cap \Trop(X')$.  It is also straightforward to check this directly.  If $\nu(a)$ is negative, then $\Trop(X)$ meets $\Trop(X')$ at a single point, but with tropical intersection multiplicity 2.  In this case, Theorem~\ref{thm:main plus multiplicities} says that $X$ meets $X'$ at either a single point with multiplicity 2, or at two points of multiplicity 1, and one can check that the intersection is always two points of multiplicity 1.  In both of these cases, the nonemptiness of $X \cap X'$ also follows from the transverse tropical lifting result of \cite{BJSST}, since the tropicalizations meet properly in the interiors of maximal faces.

Suppose $\nu(a)$ is zero.  Then $\Trop(X)$ and $\Trop(X')$ meet at a single point with tropical multiplicity 2, but the intersection is in a nonmaximal face of $\Trop(X)$, so transverse lifting results do not apply.  Nevertheless, Theorem~\ref{thm:main plus multiplicities} still says that $X$ and $X'$ meet at either a single point with multiplicity 2, or at two points of multiplicity 1.  Either possibility can occur; the algebraic intersection is a single point of multiplicity 2 when the characteristic is not 2 and $a$ is equal to $-1/4$, and two points of multiplicity 1 otherwise.
\end{ex2}

In the following example, the tropicalizations meet nonproperly along a positive dimensional set that does not contain the the tropicalization of any curve.

\begin{ex2}\label{ex:lines} 
Inside $Y = (K^*)^2$, let $X$ and $X'$ be given by $y = x + 1$ and $y = ax + b$, respectively, with $a$ and $b$ in $K^*$.  Assume $a$ and $b$ are not both 1, so the closures of $X$ and $X'$ are distinct lines in $K^2$.  In particular, $X$ and $X'$ intersect in at most one point.

Suppose $\nu(a)$ is zero and $\nu(b)$ is positive.  Then $\Trop(X)$ and $\Trop(X')$ intersect nonproperly, along the ray $\RR_{\leq 0} \cdot (1,1)$.

\begin{center}
\begin{picture}(200,160)(-100,-60)
\thicklines
\put(-28,2){$(0,0)$}
\put(65,-20){$\Trop(X)$}
\put(75,30){$\Trop(X')$}
\color{navy}
\put(0,0){\line(-1,-1){60}}
\put(0,0){\line(1,0){100}}
\put(0,0){\line(0,1){100}}
\color{darkred}
\put(20,20){\line(-1,-1){80}}
\put(20,20){\line(1,0){80}}
\put(20,20){\line(0,1){80}}
\end{picture}
\end{center}

\noindent If $a$ is 1, then the closures of $X$ and $X'$ are parallel lines in $K^2$, so none of the tropical intersection points lift.

Suppose $a$ is not 1.  Then the unique algebraic intersection point is
\[
X \cap X' = ( (1-b)/(a-1), (a-b)/(a-1)),
\]
and the unique point of $\Trop(X) \cap \Trop(X')$ that lifts is $(-\nu(a-1), -\nu(a-1))$.  For a suitable choice of $a$ congruent to 1 modulo $\fm$, any nonzero $G$-rational point in the tropical intersection can lift.  For such $a$, the initial degenerations $X_w$ and $X'_w$ coincide for all nonzero $w$ in $\Trop(X) \cap \Trop(X')$, and are disjoint otherwise.  In particular, none of the initial degenerations meet properly.  On the other hand, if $a$ is not congruent to $1$ modulo $\fm$, then the initial degenerations $X_w$ and $X'_w$ meet transversely at a single point for $w = (0,0)$, and are disjoint otherwise.  In this case $(0,0)$ is the unique tropical intersection point that lifts, as it must be by Theorem~\ref{exploded main}.  Note that, even though $\Trop(X \cap X')$ and the stable tropical intersection are both zero-dimensional, neither is necessarily contained in the other.
\end{ex2}

In the remaining examples, we assume the characteristic of $K$ is not 2 and consider tropicalizations of skew lines inside a smooth quadric surface.  These examples demonstrate the necessity of requiring the point of proper intersection to be a simple point of $\Trop(Y)$ or a smooth point of $Y_w$, in Theorems~\ref{main}, \ref{thm:lift-to-degens}, and \ref{exploded main}.

\begin{ex2}\label{ex:lines-in-quadric-1} 
Let $Y$ be the surface in $(K^*)^3$ given by
\[
z^2 + xy + x + y = 0.
\]
Then $Y$ contains the curves $X$ and $X'$ given by $x + 1 = z-1 = 0$ and $x + 1 = z+ 1 = 0$, respectively.  Now, $\Trop(X)$ and $\Trop(X')$ still meet properly at $w = (0,0,0)$, as shown.
\begin{center}
\begin{picture}(225,195)(-20,0)
\put(0,0){\resizebox{3in}{2.6in}{\includegraphics{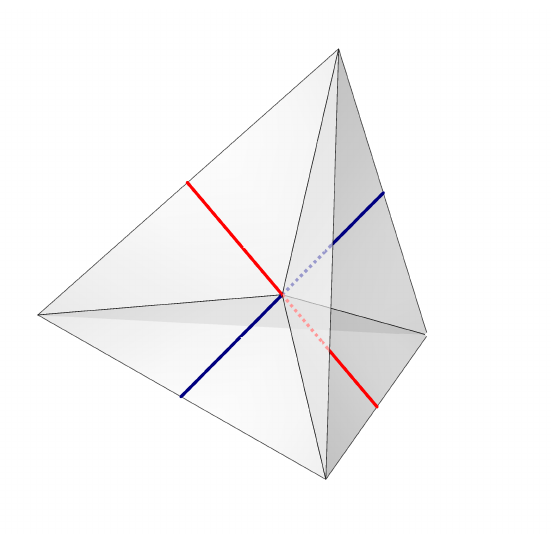}}}
\put(-15,107){$\Trop(Y)$}
\put(154,35){$\Trop(X')$}
\put(157,125){$\Trop(X)$}
\put(93,75){$w$}
\end{picture}
\end{center}
However, this tropical intersection point does not lift because the closures of $X$ and $X'$ are skew lines in $K^3$.  This is consistent with Theorems~\ref{main} and \ref{thm:lift-to-degens} because $w$ is not in the relative interior of a facet, and the initial degenerations $X_w$ and $X'_w$ are disjoint.  The closures of $X_w$ and $X'_w$ in $k^3$ are skew lines in the closure of $Y_w$, which is a smooth quadric surface.
\end{ex2}

\begin{ex2}\label{ex:lines-in-quadric-2} 
Let $a \in K^*$ be an element of positive valuation, and let $Y'$ be the surface given by
\[
z^2 -1 + a(xy + x + y + 1)= 0,
\]
and let $X$ and $X'$ be as in Example \ref{ex:lines-in-quadric-1}. Then $X$ and $X'$ are contained in $Y'$, and their tropicalizations still meet properly in $\Trop(Y')$ at the origin $w$, which is in the relative interior of a facet $\sigma$ of $\Trop(Y')$, as shown.
\begin{center}
\begin{picture}(225,195)(0,-10)
\put(0,0){\resizebox{3in}{2.5in}{\includegraphics{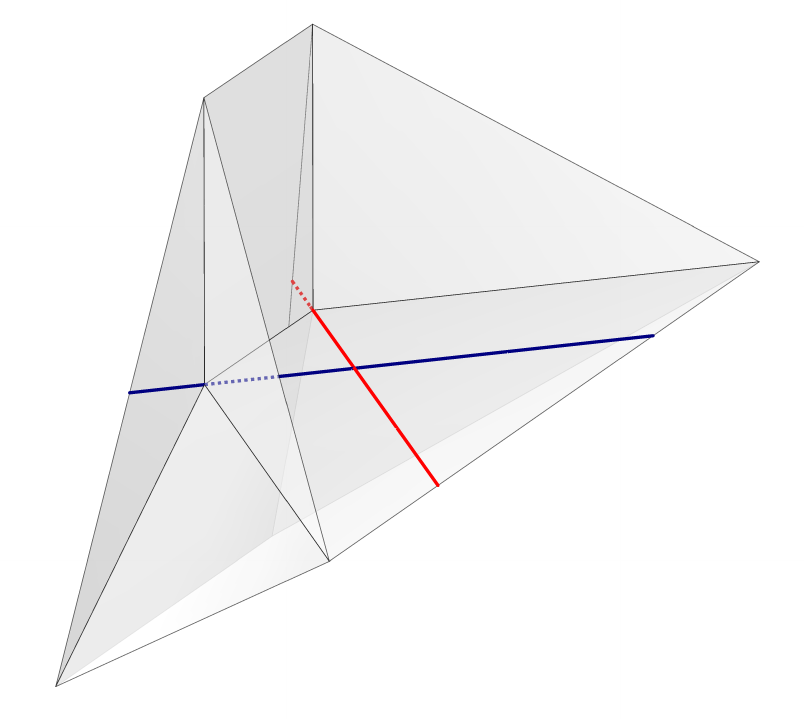}}}
\put(192,93){$\Trop(X)$}
\put(115,45){$\Trop(X')$}
\put(0,140){$\Trop(Y')$}
\put(90,80){$w$}
\put(146,70){$m(\sigma) = 2$}
\end{picture}
\end{center}
\noindent The tropical intersection point $w$ does not lift to $X \cap X'$, but this is still consistent with Theorems~\ref{main} and \ref{thm:lift-to-degens} because the multiplicity of the facet $\sigma$ is 2, and hence $w$ is not a simple point of $\Trop(Y')$.  The initial degeneration $Y'_w$ has two disjoint components, each of which contains the initial degeneration of one of the curves.
\begin{center}
\begin{picture}(225,195)(-10,0)
\put(0,0){\resizebox{3in}{2.6in}{\includegraphics{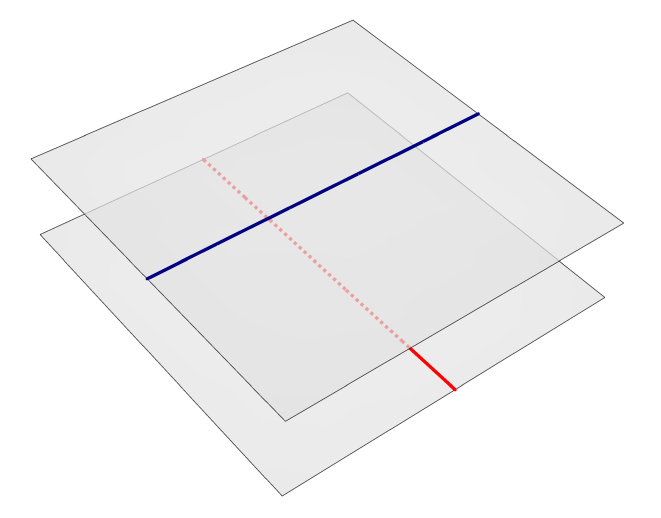}}}
\put(162,150){$X_w$}
\put(153,37){$X'_w$}
\put(-16,113){$Y'_w$}
\end{picture}
\end{center}
In particular, this tropical intersection point does not lift even to the intersection of the initial degenerations.
\end{ex2}

\begin{ex2}\label{ex:lines-in-quadric-3} 
Let $a \in K^*$ be an element of positive valuation, with $Y''$ be the surface in $(K^*)^3$ given by
\[
(x+1)(y+1)+(x+z)(y+z+a)=0.
\]
Let $X$ again be as in Example \ref{ex:lines-in-quadric-1}, and let $X''$ be the curve given by $y + 1 =  z - 1+a = 0$. Once again, $\Trop(X)$ and $\Trop(X'')$ meet properly at $w = (0,0,0)$.  Furthermore, the initial degenerations $X_w$ and $X''_w$ meet properly at a single point in $Y''_w$.  
\begin{center}
\begin{picture}(225,195)(-10,-10)
\put(10,0){\resizebox{2.5in}{2.5in}{\includegraphics{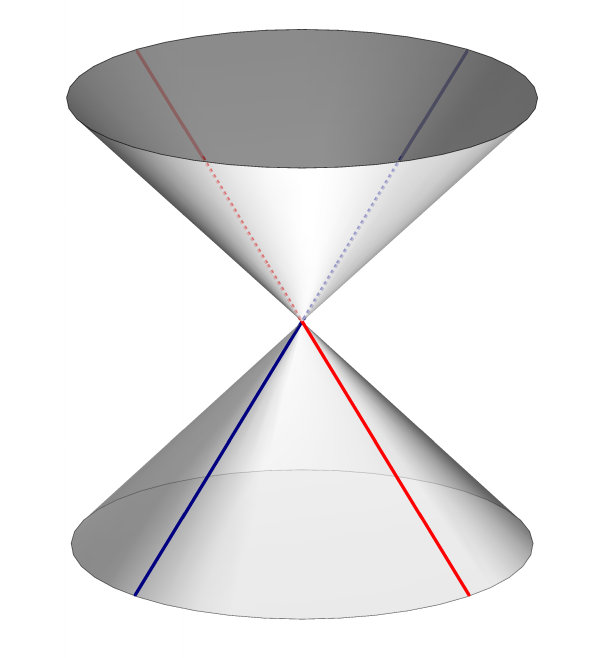}}}
\put(20,130){$Y''_w$}
\put(40,5){$X_w$}
\put(152,5){$X''_w$}
\end{picture}
\end{center}
This intersection point lifts to the initial degenerations but not to the general fiber, because the closures of $X$ and $X''$ are skew lines in $K^3$.  This is still consistent with Theorem~\ref{exploded main} because $Y''_w$ is a cone, and $X_w$ meets $X''_w$ at the singular point.
\end{ex2}

\appendix

\section{Initial degenerations, value groups, and base change}  \label{app:finite type}

Here we study how initial degenerations behave with respect to arbitrary extensions of valued fields.  These basic results are used throughout the paper to reduce our main lifting theorems to the case where $w$ is in $N_G$, by extending the ground field.

Let $M_{G,w}$ be the maximal sublattice of $M$ on which $w$ is $G$-rational.  In other words,
\[
M_{G,w} = \{ u \in M \ | \ \<u,w\> \mbox{ is in } G \}.
\]
Since $K$ is algebraically closed, the value group $G$ is divisible, and hence $M_{G,w}$ is saturated in $M$.  If the weight of a monomial $a x^u$ is zero, then $\<u, w\> = -\nu(a)$, and hence $u$ is in $M_{G,w}$.  In particular, $M_{G,w}$ contains all exponents of monomials that restrict to nonzero functions on $T_w$, and $T_w$ is a torsor over the torus associated to $M_{G,w}$.

\begin{prop2}  \label{prop:finite type}
Suppose the valuation is nontrivial.  Then the integral model $\cT^w$ is of finite type over $\Spec R$ if and only if $w$ is in $N_G$.  Furthermore, if $w$ is in $N_G$, then $\cT^w$ is of finite presentation over $\Spec R$.
\end{prop2}

\begin{proof}
Suppose $w$ is in $N_G$.  Let $u_1, \ldots, u_r$ be a basis for $M$, and choose $a_1, \ldots, a_r$ in $K^*$ such that $\nu(a_i) = -\<u_i, w\>$.  Then $R[M]^w$ is generated over $R$ by
\[
\big \{a_1 x^{u_1}, (a_1 x^{u_1})^{-1}, \ldots, a_r x^{u_r}, (a_r x^{u_r})^{-1} \big \},
\]
and hence is of finite type. Furthermore, the relations are generated by 
\[
\{1- a_i x^{u_i} \cdot (a_i x^{u_i})^{-1}\},
\]
so $R[M]^w$ is of finite presentation.

For the converse, suppose $w$ is not in $N_G$.   Then, $M_{G,w}$ is a proper sublattice of $M$, and hence the special fiber $T_w$ has dimension strictly less than the dimension of $T$.  Since $\cT^w$ is irreducible and fiber dimension is semicontinuous in irreducible families of finite type, it follows that $\cT^w$ is not of finite type.
\end{proof}

\begin{rem2}
Suppose the valuation is trivial, and let $\cS_w$ be the set of lattice points 
$u \in M$ such that $\langle u,w\rangle \geq 0$.  Then $\cS_w$ is a 
subsemigroup of $M$, and $R[M]^w$ is naturally identified with the semigroup 
ring $K[\cS_w]$.  The semigroup $\cS_w$ is finitely generated if and only if the 
ray spanned by $w$ has rational slope, and it follows that $\cT^w$ is of 
finite type over $K$ if and only if $w$ is a scalar multiple of a lattice 
point.
\end{rem2}

When the schemes $\cT^w$ and $\cX^w$ are not of finite type, there are many technical difficulties in handling them directly.  These technical difficulties can be overcome by extending scalars, since these schemes become finite type after a suitable base change, as follows.

Suppose the value group $G$ is a proper subgroup of $\RR$, and $b$ is a real number that is not in $G$.  Then there is a unique valuation $\tilde \nu$ on the function field $K(t)$ such that $\tilde \nu(t) = b$.  This valuation extends to the algebraic closure $\widetilde K$ of $K(t)$.  Iterating this procedure finitely many times, we can ensure that an arbitrary $w$ is rational over the value group of a suitable extension of $K$.  In particular, for a suitable choice of extension $\widetilde K | K$, the scheme $\widetilde \cT_w$ is of finite type over the valuation ring $\widetilde R$.

Let $\widetilde K | K$ be an arbitrary extension of valued fields, and let $\widetilde G$ be the value group of $\widetilde K$, with $\widetilde R$ the valuation ring in $\widetilde K$.  For any closed subscheme $X$ of $T$ over $K$, the tropicalization of the base change $\widetilde X$ is equal to the tropicalization of $X$ \cite[Proposition~6.1]{analytification}, so the initial degeneration $\widetilde X_w$ is nonempty if and only if $X_w$ is nonempty.  Here we give a more precise geometric relationship between these initial degenerations.  First we treat the associated schemes over the valuation rings.  Let
\[
\varphi: \widetilde \cT_w \rightarrow \cT_w
\]
be the natural map induced by the inclusion of tilted group rings, which is equivariant over the projection of tori induced by the inclusions $M_{G,w} \hookrightarrow M_{\widetilde G,w}$ and $R \hookrightarrow \widetilde R$.

\begin{thm2}  \label{thm:integral base change}
The scheme $\widetilde \cX^w$ is the preimage of $\cX^w$ under $\varphi$.
\end{thm2}

\begin{proof}
It is clear that $\varphi$ maps $\widetilde \cX^w$ into $\cX^w$.  To show that $\widetilde \cX^w$ is the full preimage of $\cX^w$, we must prove that any Laurent polynomial in the tilted group ring $\widetilde R[M]^w$ that vanishes on $\widetilde X$ is in the ideal generated by $I_X \cap R[M]^w$.  

Let $f = \sum_u \alpha(u)x^u$ be a nonzero Laurent polynomial over $\widetilde K$ in $I_{\widetilde X}$.  Then $f$ can be written as
\[
f = \alpha_1 f_1 + \cdots + \alpha_n f_n,
\]
with $f_1, \ldots, f_n$ in $I_X$ and linearly independent over $K$, and $\alpha_i$ in $\widetilde K$ all nonzero.  Say $a_i(u)$ is the coefficient of $x^u$ in $f_i$, so the coefficient of $x^u$ in $f$ is
\[
\alpha(u) = \alpha_1 a_1 (u) + \cdots + \alpha_n a_n(u).
\]
Now, suppose $f$ is in $\widetilde R[M]^w$.  If each $\alpha_i f_i$ is in $\widetilde R[M]^w$, then it is easy to see that $f$ is in the ideal generated by $I_X \cap R[M]^w$, by applying the case $n = 1$, below.  The difficulty is that there may be some cancellation of leading terms in the above expression for $f$, by which we mean that the valuation of some coefficient $\alpha(u)$ may be strictly larger than $\min_i \{ \widetilde \nu (\alpha_i a_i(u)) \}$.  Roughly speaking, this means that the vector $(\alpha_1, \ldots, \alpha_n)$ is nearly orthogonal to $(a_1(u), \ldots, a_n(u))$.

When $n$ is greater than $1$, we proceed by carefully eliminating one term in the summation, writing $f$ as a $\tilde K$-linear combination of $f_1, ..., f_{n-1}$ plus a single element of $\widetilde K$ times a $K$-linear combination of $f_1, ..., f_n$.  A suitable choice in the elimination procedure ensures that each of these two terms is in $\widetilde R [M]^w$, and we deduce that they are in the ideal generated by $I_X \cap R[M]^w$, by induction on $n$.  Roughly speaking, we choose $u_1, \ldots, u_{n-1}$ so that $(a_1(u_i), \ldots, a_n(u_i))$ are as close to orthogonal as possible to $(\alpha_1, \ldots, \alpha_n)$, for $1 \leq i \leq n-1$.  Then we replace $(\alpha_1, \ldots, \alpha_n)$ by the unique vector in $K^n$ that is orthogonal to $(a_1(u_i), \ldots, a_n(u_i))$ for $1\leq i \leq n-1$ and with $n$th coordinate $1$.  The corresponding $K$-linear combination of $f_1, \ldots, f_n$ is then multiplied by $\alpha_n$ and subtracted from $f$.  The details are as follows.

Suppose $n = 1$.  Let $m = a_1(u) x^u$ be a monomial of lowest weight in $f_1$, and let $g = f_1 / m$.  Then $f$ can be expressed as
\[
f = \alpha_1 m g,
\]
with $g$ in $I_X \cap R[M]^w$, and $\alpha_1 m$ in $\widetilde R [M]^w$, as required.

We proceed by induction on $n$.   Given $u_1, \ldots, u_{n-1}$ in $M$, consider the matrix whose $(i,j)$th entry is $a_i(u_j)$, and let $\delta_i$ be the $i$th maximal minor
\[ 
\delta_i = 
\begin{vmatrix}
a_1(u_1) & \cdots & a_1(u_{n-1}) \\
\vdots      & 		 & \vdots 	     \\
\widehat{a_i(u_1)} & \cdots & \widehat {a_i(u_{n-1})} \\
\vdots & 			& \vdots 			\\
a_n(u_1) & \cdots & a_n(u_{n-1}) 
\end{vmatrix}.
\]
Since $f_1, \ldots, f_{n-1}$ are linearly independent over $K$, we may choose $u_1, \ldots, u_{n-1}$ so that $\delta_n$ is nonzero and
\[
\tilde \nu(\alpha(u_1)) + \cdots + \tilde \nu(\alpha(u_{n-1})) - \nu( \delta_n)
\]
is as large as possible.  This choice is essential in the proof of the following claim.

We claim that
\[
h = \frac{\alpha_n}{\delta_n} \sum_{i=1}^n ( -1)^{n-i} \delta_i f_i 
\]
is in $\widetilde R [M]^w$.  The claim implies that $h$ is in the ideal generated by $I_X \cap R[M]^w$, by the case $n =1$, above, and also that the difference $f - h$ is in $\widetilde R[M]^w$.  The coefficient of $f_n$ in the above expression is equal to $\alpha_n$, by construction, so the difference $f - h$ can be written as a $\widetilde K$-linear combination of $f_1, \ldots, f_{n-1}$.  It follows by induction that $f-h$ is also in the ideal generated by $I_X \cap R[M]^w$, and this proves the theorem.

It therefore remains to show that $h = \sum \beta(u)x^u$ is in $\widetilde R[M]^w$, which means that the $w$-weight of each monomial $\beta(u) x^u$ is nonnegative.  Fix one such monomial, write $u_n = u$ for its exponent, and let $A$ be the square $n \times n$ matrix whose $(i,j)$th entry is $a_i(u_j)$,
\[
A = 
\left( \begin{matrix}
a_1(u_1) & \cdots & a_1(u_n) \\
\vdots & \ddots & \vdots \\
a_n(u_1) & \cdots & a_n(u_n) 
\end{matrix} \right).
\]
Let $A_{ij}$ be the $(i,j)$th minor of $A$, the determinant of the submatrix obtained by deleting the $i$th row and $j$th column.  So $\delta_i = A_{in}$.  Expanding $\det A$ in the last column shows that
\[
\beta(u_n) = \frac{\alpha_n}{A_{nn}} \det A.
\]
Therefore, the valuation of $\beta(u_n)$ is
\[
\tilde \nu(\beta(u_n)) = \nu ( \det A) - \nu (A_{nn}) + \tilde \nu(\alpha_n).
\]

Since $f$ is in $\widetilde R[M]^w$ by hypothesis, it will be enough to show that $\tilde \nu(\beta(u_n))$ is at least as large as $\tilde \nu(\alpha(u_n))$.  To compare $\beta(u_n)$ with $\alpha(u_n)$, we consider the matrix
\[
A' = 
\left( \begin{matrix}
\alpha_1a_1(u_1) & \cdots & \alpha_1 a_1(u_n) \\
\vdots & \ddots & \vdots \\
\alpha_{n-1}a_{n-1}(u_1) & \cdots & \alpha_{n-1}a_{n-1}(u_n)\\
\alpha(u_1) & \cdots & \alpha(u_n) 
\end{matrix} \right).
\]
Recall that the coefficient $\alpha(u_j)$ in the bottom row is $\alpha_1 a_1(u_j) + \cdots + \alpha_n a_n (u_j)$, so the determinant of $A'$ is $\alpha_1 \cdots \alpha_n \det A$.  Expanding $\det A'$ in the last row gives also
\[
\det A' = \alpha_1 \cdots \alpha_{n-1} \bigg( \sum_{i=1}^n (-1)^{n-i} \alpha(u_i) A_{ni} \bigg),
\]
and comparing these two expressions for $\det A'$ yields
\[
\det A = \frac{1}{\alpha_n} \bigg( \sum_{i=1}^n (-1)^{n-i} \alpha(u_i) A_{ni} \bigg),
\] 
Therefore,
\[
\nu(\det A) \geq \min_i \{ \nu(\alpha(u_i)) + \nu(A_{ni})\} - \nu(\alpha_n).
\]
Now $u_1, \ldots, u_{n-1}$ were chosen so that this minimum occurs at $i = n$.  Substituting the resulting inequality for $\nu(\det A)$ into the expression for $\tilde \nu (\beta(u_n))$ above shows that $\tilde \nu(\beta(u_n))$ is greater than or equal to $\tilde \nu (\alpha(u_n))$.  This proves the claim, and the theorem follows.
\end{proof}

We now pass to the initial degenerations.  Let
\[
\phi: \widetilde T_w \rightarrow T_w
\]
be the natural projection of torus torsors induced by the inclusions of tilted group rings, modulo monomials of strictly positive $w$-weight.

\begin{thm2}  \label{thm:initial base change}
The initial degeneration $\widetilde X_w$ is the preimage of $X_w$ under $\phi$.
\end{thm2}

\begin{proof}
It is clear that $\phi$ maps $\widetilde X_w$ into $X_w$.  Any element $f_w$ of the ideal of $\widetilde X_w$ is the residue of a Laurent polynomial $f$ in $I_{\widetilde X} \cap \widetilde R[M]^w$, which is then in the ideal generated by $I_X \cap R[M]^w$, by Theorem~\ref{thm:integral base change}.  Taking residues shows that $f_w$ in the ideal generated by the pullback of $I_{X_w}$, and it follows that $\widetilde X_w$ is the full preimage of $X_w$.
\end{proof}

\begin{rem2} \label{rmk:properties of base change}
Since $\phi$ is smooth and has connected fibers, it follows that many geometric properties of initial degenerations are preserved under extensions of valued fields.  For instance, the sum of the multiplicities of the irreducible components of $X_w$ is equal to that of $\widetilde X_w$, which is helpful for defining tropical multiplicities.  Most importantly for our purposes, a point $\tilde x$ is smooth in $\widetilde Y_w$ if and only if $\phi(\tilde x)$ is smooth in $Y_w$, $\widetilde X_w$ meets $\widetilde X'_w$ properly at $\tilde x$ if and only if $X_w$ meets $X'_w$ properly at $\phi(\tilde x)$, and $\tilde x$ is in $(\widetilde X \cap \widetilde X')_w$ if and only if $\phi(\tilde x)$ is in $(X \cap X')_w$.
\end{rem2}

\section{Topology of finite type morphisms}\label{app:closed-pts}

Because the results may be of independent interest, we explain how Theorem~\ref{thm:closed-pts} on the existence of closed points in fibers, and Theorem
\ref{thm:lift to K} on lifting points of intersection, can both be 
extended to an arbitrary base scheme.

The proof of Proposition~\ref{prop:krull} does not use that the base scheme is the spectrum of a valuation ring of rank $1$, and in fact yields the following result.

\begin{prop2}\label{prop:app-krull} Let $X \to S$ be a flat morphism of finite 
type of irreducible schemes, and
suppose that $D$ is a locally principal closed subscheme of $X$ that does 
not meet the generic fiber.  Then, for every $s \in S$, every irreducible 
component of the fiber $D_s$ is an irreducible component of $X_s$.
\end{prop2}

\begin{rem2}\label{rem:blowup}
Some hypothesis such as flatness is necessary for such results on locally principal subschemes over general base schemes, as shown by the following example.  Suppose $X \rightarrow S$ is the blowup of the affine plane at the origin.  Then the strict transform $D$ of a line through the origin is locally principal and does not meet the generic fiber, but its intersection with the exceptional fiber is a single point.
\end{rem2}

We can now prove the first stated result.

\begin{thm2}\label{thm:lift-pts-general}
Let $X \rightarrow S$ be a morphism locally of finite type, and let $s$ be a specialization of $s'$ in $S$.  Suppose $x'$ is a point in $X_{s'}$ specializing to a closed point $x$ in $X_s$.  Then there is a closed point $x''$ in $X_{s'}$ which specializes to $x$, and such that $x'$ specializes to $x''$. Moreover, the set of such $x''$ is Zariski dense in the closure of $x'$ inside $X_{s'}$.  More generally, if $x$ is not necessarily closed in $X_s$, and $k(x)$ denotes the residue field of $x$, we can choose $x''$ to satisfy the inequality 
\begin{equation}\tag{B.1}\label{eq:trans-ineq} 
\trdeg k(x'')/k(s') \leq \trdeg k(x)/k(s),
\end{equation}
and again the choices of $x''$ are Zariski dense in the closure of $x'$ inside $X_{s'}$. Moreover, if $S$ is the spectrum of a valuation ring, we have equality in \eqref{eq:trans-ineq}.
\end{thm2}

Note that the example of Remark \ref{rem:blowup} shows that we cannot do 
better than the inequality \eqref{eq:trans-ineq} for a general base scheme.

\begin{proof} First, in light of the generalized Proposition 
\ref{prop:app-krull}, the argument of Theorem \ref{thm:closed-pts}
goes through to prove the desired result in the case that $S$
is an arbitrary valuation ring, with $s'$ the generic point of $S$.
The only subtlety is that in the general case, the components of $\cD$
not containing $x$ do not necessarily form a closed subset. Nonetheless,
we can pass to an open neighborhood of $x$ such that every component of 
$\cD$ meeting the generic fiber (if there are any) must contain
$x$, and this suffices for the argument. To remove the restriction that $s'$
be the generic point of $S$, we simply note that the closure of $s'$ is again
the spectrum of a valuation ring.

We thus wish to reduce to the valuation ring case. Replacing $X$ by the 
closure of $x'$, we may assume $X$ is integral and $x'$ is its generic point.
By \cite[Proposition~7.1.4(ii)]{EGA2}, there is a valuation ring $A$ in 
$k(x')$ with a dominant morphism
\[
\Spec A \rightarrow X
\]
mapping the closed point to $x$.  Let $R$ be the valuation ring in the residue field $k(s')$ given by intersecting with $A$ in $k(x')$, and let $X' = X \times_S \Spec R$.  Then the following diagram is commutative.
\[
\xymatrix{{X'}\ar[r]\ar[d] & {X} \ar[d] \\
{\Spec R}\ar[r] & {S}}
\]
By construction the map from $\Spec A$ to $X$ factors through $X'$.  So, by the case we have already handled, there is a point $x''$ in the generic fiber of $X'$ over $\Spec R$ specializing to the image $\tilde{x}$ in $X'$ of the closed point of $\Spec A$ and satisfying the desired equality of residue field extensions; moreover, such points are Zariski dense in the generic fiber.  Since $k(s')$ is identified with the fraction field of $R$, by construction, the generic fiber of $X'$ maps isomorphically to the generic fiber of $X$. Finally, observing that we have the inequality
\[ \trdeg k(\tilde{x})/(R/\fm_R) \leq \trdeg k(x)/k(s),\]
we conclude the desired statement. 
\end{proof}

\smallskip

Naive statements about global codimension and subadditivity do not extend from valuation rings of rank 1 to valuation rings of higher rank, as shown by Example~\ref{ex:not subadd}.  Nevertheless, hypotheses on codimension of intersection can still yield lifting results even when subadditivity of codimension fails, as demonstrated by the following theorem.

\begin{thm2} \label{thm:lift general}
Let $Y \rightarrow S$ be a smooth morphism, and let $X$ and $X'$ be closed subschemes of $Y$, flat over $S$, such that the codimension in $Y$ of every component of $X$ and every component of $X'$ is less than or equal to $j$ and $j'$, respectively.  Suppose that, for some $s \in S$, the fibers $X_s$ and $X'_s$ meet in codimension $j+j'$ at a point $x$ in $Y_s$.  Then for any $s' \in S$ specializing to $s$:
\begin{enumerate}
\item There is a point $x'$ in $X_{s'} \cap X'_{s'}$ specializing to $x$.
\item If $x$ is closed in $Y_s$, then $x'$ may be chosen to be closed in
$Y_{s'}$. More generally, we may choose $x'$ so that we have
\[
\trdeg k(x')/k(s') \leq \trdeg k(x)/k(s).
\]
\end{enumerate}
\end{thm2}

\begin{proof} 
In light of Theorem~\ref{thm:lift-pts-general}, the second assertion follows immediately from the first. Observe that the flatness hypotheses mean that the hypotheses of the theorem are preserved under arbitrary base change. In particular, we reduce to the case that $S$ is the spectrum of a valuation ring, with $s'$ the generic point. We then prove the desired statement with a reduction to the diagonal and inductive application of Proposition \ref{prop:app-krull}, using that $S$ is the spectrum of a valuation ring to preserve the flatness hypothesis.
\end{proof}

\begin{rem2}
In Example~\ref{ex:not subadd}, the special fibers of the two subschemes coincide, and the generic fibers are disjoint.  This does not contradict Theorem~\ref{thm:lift general} because the special fibers do not meet properly, and the failure of subadditivity is for simple numerical reasons.  The intersection that does not lift has dimension one larger than expected, but is supported in a fiber of codimension $r$, which is greater than 1.
\end{rem2}

\section{An application to tropical elimination theory}
\label{Appendix}

Let $X$ be an irreducible closed subscheme of $T$ and let $\varphi: T \rightarrow T'$ be a homomorphism of tori that induces a generically finite morphism from $X$ to the closure of its image, which we denote $X'$.  Then, set theoretically, $\Trop(X')$ is the image of $\Trop(X)$ under the induced linear map $\phi: N_\RR \rightarrow N'_{\RR'}$.  The fundamental problem of tropical elimination theory, solved by Sturmfels and Tevelev for the special case where the valuation is trivial, is to determine the multiplicities on the facets of $\Trop(X')$.  Here we use tropical lifting theorems to generalize \cite[Theorem~1.1]{SturmfelsTevelev08} to the case of a nontrivial valuation.  See also \cite[Section~8]{BPR11} for an analytic proof of this result and applications to curves.

After subdividing, we may assume that $\phi$ maps each face of $\Trop(X)$ onto a face of $\Trop(X')$.

\begin{thm2} \label{thm:sturmfelstevelev}
  The multiplicity of a facet $\sigma'$ in $\Trop(X')$ is
\[
m(\sigma') = \frac{1}{\delta} \sum_{\phi(\sigma) = \sigma'} m(\sigma) \cdot [N'_{\sigma'} : \phi(N_\sigma)],
\]
where $\delta$ is the degree of $\varphi$.
\end{thm2}

\noindent Theorem~\ref{thm:sturmfelstevelev} and Corollary~\ref{cor:cycles} together imply that tropicalization of cycles commutes with push forward.  See \cite[Theorem~13.17]{Gubler12}.

\begin{proof} We prove the theorem by intersecting $X$ and $X'$ with suitable
translates of subtori and then counting points using tropical intersection
theory and lifting theorems.  First, we choose the translated subtori to
ensure that these intersections occur in a locus where $\alpha$ is
well-behaved.

Since $\varphi$ is generically finite, there is a dense open subset $U'
\subset X'$ such that the induced map $\varphi^{-1}(U') \rightarrow U'$ is
finite of degree $\delta$ \cite[Exercise II.3.7]{Hartshorne77}.
Shrinking $U'$ further, if necessary, we may assume that $U'$ is smooth
and $\varphi^{-1}(U')$ is flat over $U'$, so the preimage of a
zero-dimensional subscheme of length $m$ in $U$ is a zero-dimensional
subscheme of length $\delta \cdot m$.

Let $\Lambda'$ be a sublattice of  $N'$ complementary to $N'_{\sigma'}$, so $N'$ splits as a direct sum 
\[
N' = N'_{\sigma'} \oplus \Lambda'.
\]  
We write $T'_{\Lambda'}$ for the subtorus of $T'$ whose lattice of one-parameter subgroups is $\Lambda'$.  
Let $\Lambda \subset N$  be the preimage of $\Lambda'$, with $T_\Lambda$ the associated subtorus of $T$.  
Let $\widetilde T_\Lambda$ be the preimage of $T'_{\Lambda'}$ which is the product of $T_\Lambda$ with a zero-dimensional scheme of length
\[
\ell = \frac{ [N' : \phi(N)]} {[\Lambda' : \phi(\Lambda)]}.
\]
If the characteristic of $K$ is zero, or prime to $\ell$, then $\widetilde T_\Lambda$ is a union of translates of $T_\Lambda$ by $\ell$ 
distinct torsion points.

We claim that, for any nonempty open subset $U \subset X$ there is an open dense set of $t \in T$ such that $t\widetilde T_\Lambda \cap X$ 
is contained in $U$.  
To see this, consider the incidence subscheme $W$ in $T \times X$ parametrizing pairs $(t,x)$ such that $x$ is in $t \widetilde T_\Lambda$.  
Then the first projection is dominant and generically finite, while the second projection is flat and maps $W$ surjectively onto $X$.  
Therefore, the preimage of $X \smallsetminus U$ has positive codimension in $W$ and hence projects into a set of positive codimension 
in $T$.  
Therefore the complement of the closure of $p_1 (p_2^{-1} (X \smallsetminus U))$ is an open dense subset of $T$ consisting of points $t$ 
such that $t \widetilde T_\Lambda \cap X$ is contained in $U$.  

We fix
\[
U = \varphi^{-1}(U') \cap X^{\mathrm{sm}}
\]
and choose $v \in N_G$ such that $v' = \phi(v)$ is in the relative interior of $\sigma'$.  Since $\Trop^{-1}(v)$ is Zariski dense in $T$, we can choose $t \in \Trop^{-1}(v)$ in the open dense subset of $T$
described above, such that $t \widetilde T_\Lambda \cap X$ is contained in $U$.
Note that, since $v'$ is in the relative interior of the maximal face  $\sigma'$ of $\Trop(X')$, 
it has finitely many preimages $v_1, \ldots, v_r$ in $\Trop(X)$, one in
each maximal face $\sigma_i$ mapping onto $\sigma'$.  Let $\sigma_i$ be
the maximal face of $\Trop(X)$ containing $v_i$.

Let $t' = \varphi(t)$.  We now consider $t'T'_{\Lambda'} \cap X'$ and $t \widetilde T_\Lambda \cap X$, and especially the parts of these 
intersections that live in $\Trop^{-1}(v')$ and $\Trop^{-1}(v_i)$, for $1 \leq i \leq r$, respectively.  
By construction, $\Trop(t'T'_{\Lambda'})$ is the affine linear space $\Lambda'_\RR + v'$ with multiplicity 1, and meets $\Trop(X')$ transversally 
at $v'$.  The translation of $t'T'_{\Lambda'}$ by a sufficiently small vector in $N'_\RR$ also meets $\sigma'$ transversally 
at a single point, so the fan displacement rule gives the local tropical 
intersection multiplicity as
\[
i(v', \Trop(X') \cdot \Trop(t'T'_{\Lambda'})) = m(\sigma') \cdot [N' : N'_{\sigma'} + \Lambda'].
\]
By the choice of $t$, the intersection of $t'T'_{\Lambda'}$ with $X'$ is contained in the smooth locus of $X'$.  
In particular, both are Cohen-Macaulay along their intersection in $\Trop^{-1}(v')$.  
Therefore, Corollary~\ref{cor:CM} says that the intersection 
of $X'$ with $t'T'_{\Lambda'}$ in $\Trop^{-1}(v')$ is a zero-dimensional scheme $Z'$ 
of length $i(v', \Trop(X') \cdot \Trop(t'T'_{\Lambda'}))$.  Similarly, $\Trop(t \widetilde T_\Lambda)$ is the affine linear space $\Lambda_\RR+ v$ 
with multiplicity $\ell$, and meets $\Trop(X)$ transversally at $v_i$ with local tropical intersection multiplicity
\[
i(v, \Trop(X) \cdot \Trop(t \widetilde T_\Lambda)) = \ell \cdot m(\sigma_i) \cdot [N : N_{\sigma_i} + \Lambda].
\]
Both $t \widetilde T_\Lambda$ and $X$ are smooth and hence Cohen-Macaulay along their intersection, so the intersection of $t \widetilde T_\Lambda$ with $X$ in 
$\Trop^{-1}(v_i)$ is a zero-dimensional scheme $Z_i$ of length $i(v, \Trop(X) \cdot \Trop(t \widetilde T_\Lambda))$.

By the choice of $t$, the map $\varphi$ is finite of degree $\delta$ in a neighborhood of $Z'$.  Furthermore, the preimage 
$\varphi^{-1}(Z')$ is exactly $Z_1 \cup \cdots \cup Z_r$.   Therefore,
\[
\length (Z')  = \frac{1}{\delta}( \length(Z_1) + \cdots + \length(Z_r)).
\]
Substituting the above tropical intersection multiplicities for these lengths gives the identity
\[
m(\sigma') \cdot [N' : N_{\sigma'} + \Lambda'] = \frac{\ell}{\delta} \cdot \sum_{i = 1}^r m(\sigma_i) \cdot [N: N_{\sigma_i} + \Lambda].
\]
Now $\sigma_1, \ldots, \sigma_r$ are exactly the faces of $\Trop(X)$ that map onto $\sigma$, and $\ell = [N' : \phi(N)] / [\Lambda' : \phi(\Lambda)]$.  
By rearranging terms, one then sees that, to prove the theorem, it suffices to show
\[
[N' : \phi(N)] \cdot  [N: N_{\sigma_i} + \Lambda]  = [N'_{\sigma'} : \phi(N_{\sigma_i})] \cdot [ \Lambda' : \phi(\Lambda)],
\]
for $1 \leq i \leq r$.  Both sides are equal to $[N' : \phi(N_{\sigma_i} +
\Lambda)]$, and the theorem follows.
\end{proof}

\bibliographystyle{amsalpha}
\bibliography{math}

\end{document}